\newtheorem{Thm}{Theorem}[section]
\newtheorem{Lem}[Thm]{Lemma}
\newtheorem{Cor}[Thm]{Corollary}
\newtheorem{Prop}[Thm]{Proposition}
\newtheorem{Prob}[Thm]{Problem}
\newtheorem{``Conj"}[Thm]{``Conjecture"}
\newtheorem{Ass}[Thm]{Assumption}
\theoremstyle{definition}
\newtheorem{Rem}[Thm]{Remark}
\newtheorem{Ex}[Thm]{Example}
\newtheorem{Def}[Thm]{Definition}
\newcommand{\R}{\ensuremath{\mathbb{R}}}
\newcommand{\C}{\ensuremath{\mathbb{C}}}
\newcommand{\G}{\ensuremath{\mathbb{G}}}
\newcommand{\Z}{\ensuremath{\mathbb{Z}}}
\newcommand{\Q}{\mathbb{Q}}
\newcommand{\D}{\mathcal{D}}
\newcommand{\OO}{\mathcal{O}}
\def\Tr{\mathop{\mathrm{Tr}}\nolimits}
\def\l{\langle}
\def\r{\rangle}
\begin{document}

\title[Fano Shimura varieties]{
Fano Shimura varieties with \\ 
mostly branched 
cusps} 

\author{Yota Maeda, Yuji Odaka}
\address{(Y.M) Sony Group Corporation, 1-7-1 Konan, Minato-ku, Tokyo, 108-0075, Japan/Department of Mathematics, Faculty of Science, Kyoto University, Kyoto 606-8502, Japan}
\email{y.maeda@math.kyoto-u.ac.jp}
\address{(Y.O) Department of Mathematics, Faculty of Science, Kyoto University, Kyoto 606-8502, Japan}
\email{yodaka@math.kyoto-u.ac.jp}

\maketitle

%%%abstract%%%
\begin{abstract}
We prove that the 
Satake-Baily-Borel compactification of 
certain Shimura varieties 
are Fano varieties, Calabi-Yau varieties or 
have ample canonical divisors 
with mild singularities. We also prove 
some variants statements, give applications and 
discuss various examples including new ones, for instance, the moduli spaces of unpolarized (log) Enriques surfaces. 
\end{abstract}

%%%%%%%%%%%%%%%%%%%%%%%%%%%%%%%%%%%%%%%%%%%%%%%%%%%%%%%%%%%%%%%%%%%%%%%%

\section{Introduction}

We prove that the Satake-Baily-Borel compactification of certain Shimura varieties are 
Fano varieties or with ample canonical divisor by means of special modular forms 
(see Theorem \ref{Fanoness}). 
Their unbranched open subsets are always  quasi-affine, and
in Fano Shimura varieties case, we observe that most of cusps are 
covered by the closure of 
branch divisors. 
In section \ref{Ex.sec}, 
we give various concrete  examples, which 
include the moduli of 
(log) Enriques surfaces, those corresponding to 
$II_{2,26}$, and those  associated to 
various Hermitian lattices which we construct. 

The study of birational types of Shimura varieties 
is a semi-classical topic; Tai \cite{Tai}, Freitag \cite{Freitag} and Mumford \cite{Mumford} (resp. Kond\={o} \cite{KondoI, KondoII},  Gritsenko-Hulek-Sankaran \cite{GHS} and Ma \cite{Ma.gentype}) showed some Siegel (resp. orthogonal) modular varieties are of general type.
Recently, the first author studied a similar problem for unitary modular varieties \cite{Maeda_big}. 

On the other hand, in order to prove that Shimura varieties have negative Kodaira dimension, one of the powerful tools for it is the use of certain reflective modular forms \cite{ 
 Gr, MaYos, GrHu, Gri, Maeda}.
 
 For this recurring theme, our main idea in this paper is to focus on the 
Satake-Baily-Borel compactification, 
study it through modern 
birational geometry adapted 
to singular varieties 
and give 
applications. 
In this paper, we define ``special" reflective modular forms, motivated by the work of Gritsenko-Hulek \cite{GrHu}, and show a criterion for proving 
the Satake-Baily-Borel compactification of 
Shimura varieties are Fano varieties. 

Then, we discuss examples in section \S \ref{Ex.sec}, 
including new ones, 
to which we apply our criterion. 
For instance, it follows that the 
Satake-Baily-Borel compactification of the 
moduli spaces of unpolarized (log) Enriques surfaces are Fano varieties; see Example \ref{Enriques}, \ref{log.Enriques}. 

We also give some applications to 
the understanding of cusps and rationality problems. 
That is, for these 
Fano-like Shimura varieties, 
all but one 
compact cusps are shown to be contained in the 
closure of branch divisors. In the same setup, we also show that 
if 
there are no such compact cusps, 
two general 
points are connected by a 
rational curve  
i.e., rationally 
connected by \cite{Zhang}. See Corollary 
\ref{connected} for 
details. 
The former uses \cite{Amb, Fjn}, 
and in particular it 
logically relies on a 
vanishing theorem proven in 
{\it loc.cit}. We do not 
know of another proof which does not use a  
vanishing theorem 
(Problem \ref{novanishing}). 
See Corollaries 
\ref{connected}, \ref{Qrk1}, 
\ref{-K.big} for the details 
and more assertions proven. For instance, 
the moduli space of (unpolarized) Enriques surface 
is shown to be rationally connected, 
which is a weaker version of a famous result of 
Kond\={o} \cite{Kondo.Enriques}.

\section{Main results and proofs}

In this section, we prove general theorems which are mentioned in the introduction. 
In the later section \S \ref{Ex.sec}, 
we apply them to various concrete examples. First, we introduce some notations. 

\subsection{Convention and Notation}\label{Notation}
Below, we discuss 
the linear equivalence class of a 
Cartier divisor and 
the corresponding holomorphic line bundle 
interchangeably. 
Similarly, 
we do not distinguish the 
$\Q$-linear equivalence class of 
a $\Q$-Cartier divisor 
and the corresponding 
$\Q$-line bundle. 
We use the following notations throughout. 
\begin{itemize}
\item $\mathbb{G}$ is a simple algebraic group over $\Q$, not isogenous to ${\rm SL}(2)$.
\item $G$ is the identity component of $\mathbb{G}(\R)$, which we assume to be a simple Lie group. 
\item $K$ is a maximal compact subgroup of $G$, 
\item The corresponding 
Hermitian symmetric domain is $G/K$. 
\item Take an arithmetic subgroup $\Gamma\subset \mathbb{G}(\Q)$ i.e., 
commensurable to $\G(\Z)$. 
\item $X:=\Gamma\backslash G/K$ and its Satake-Baily-Borel 
compactification $\overline{X}^{\rm SBB}$ (\cite{Sat, BB}). 
\item $\mathbb{H}$ denotes the upper half plane (which is an example of $X$). 
\item 
$\partial 
\overline{X}^{\rm SBB}$ denotes the boundary of 
the Satake-Baily-Borel compactification, i.e., 
$\overline{X}^{\rm SBB}\setminus X$. 

\item Denote a toroidal compactification of 
$X$ in the sense of \cite{AMRT}, 
with an arbitrary fixed cone decompositions, 
simply as $\overline{X}$. (The choice of cone decompositions
 do not affect the following discussions. )
\item Denote the boundary divisor $\overline{X}\setminus X$ as $\Delta$ 
(with coefficients $1$). 
\item Denote the branch divisor of $G/K \to \Gamma\backslash G/K$ 
to be $\cup_{i}B_{i}(\subset X)$ with prime divisors $B_{i}$ and 
branch (or ramification) degree $d_{i}$. We denote the closure of $B_{i}$ in 
$\overline{X}$ (resp., $\overline{X}^{\rm SBB}$) 
as $\overline{B_{i}}$ (resp., $\overline{B_{i}}^{\rm SBB}$). 
\item $X^{o}:=X\setminus \cup_{i}B_{i}$. 
\item $L:=K_{\overline{X}}+\Delta+\sum_{i}\frac{d_{i}-1}{d_{i}}
\overline{B_{i}}\in {\rm Pic}
(\overline{X})
\otimes \mathbb{Q}$ 
and its descended (automorphic)  $\Q$-line bundle on 
$\overline{X}^{\rm SBB}$, i.e.,  $K_{\overline{X}^{\rm SBB}}+ 
\sum_{i}\frac{d_{i}-1}{d_{i}} 
\overline{B_{i}}^{\rm SBB}$. 
\item 
Recall from \cite{BB} and \cite[3.4, 4.2 (also see 1.3)]{Mum77} 
that $L$ is ample (resp., semiample) on 
$\overline{X}^{\rm SBB}$ (resp., $\overline{X}$) and 
a meromorphic section of $L^{\otimes t}$ for $t\in \mathbb{Z}_{>0}$ 
corresponds to meromorphic automorphic form 
of arithmetic weight $ct$ for some $c\in \mathbb{Z}$. 
In this paper, weight always simply refers to the 
arithmetic weight (in the sense of e.g., 
\cite{GHS}) and 
call $c$ the {\it canonical 
weight}, following e.g., \cite{GHS}. 
See also Lemma \ref{can.wt} for the calculation of $c$. 

\end{itemize}

\subsection{Special reflective modular forms}
\label{special.form.section}

Recall that reflective modular form is the concept 
originally formulated in \cite{Gr} for orthogonal case, 
which means that the divisor is defined by reflections. 
In this paper, 
we consider the following stronger properties, 
or proper subclass  
of reflective modular forms. 
The upshot of our general observation 
is that the existence of such special 
reflective modular forms 
give strong implications on the birational properties of  
modular varieties (see Theorem \ref{Fanoness}). 
These modular forms are rare, 
but luckily still various interesting 
examples are known (cf., \cite{Gri}, 
our \S \ref{Ex.sec}). We also construct new  
examples in the section \S \ref{Ex.sec}. 

\begin{Ass}[Special reflective modular forms - General case]
\label{Ass1}
Consider the following subclasses of reflective modular forms.
\begin{enumerate}
\item \label{item1.1} A non-vanishing 
holomorphic section $f$ of 
\[\mathcal{O}_{X}(N(s(X)L-\sum_{i}\frac{d_{i}-1}{d_{i}}B_{i}))
\biggl(:=L^{\otimes aN}\biggl(-\sum_{i}\frac{N(d_i-1)}{d_i}B_i\biggr)\biggr)\]
for some $N\in \Z_{>0}$, 
$s(X)\in \mathbb{Q}_{>0}$ with $s(X)N, \frac{N}{d_i} 
\in 
\mathbb{Z}_{>0}$.
\item \label{item1.2}
A non-vanishing holomorphic section $f$ of 
$\mathcal{O}_{X}(N(s(X)L-\sum_{i}c_{i}B_{i}))$ 
for some $N\in \Z_{>0}$, 
$s(X)\in \mathbb{Q}_{>0}$, and 
$c_{i}\in \mathbb{Q}$ with 
$0\le c_{i}\le \frac{d_{i}-1}{d_{i}}$ for all $i$, 
such that $s(X)N, Nc_{i} \in \mathbb{Z}$. 
\end{enumerate}
We follow 
the same convention below. 
\end{Ass}
For a specific choice of $\G$ and $\Gamma$ that we are about to specify, Assumption \ref{Ass1} (i) specializes to the following simpler condition.

\begin{Ass}[Special reflective modular forms - orthogonal case]
\label{Ass2}
For $n>2$, assume that there is a quadratic lattice $\Lambda$ of signature $(2,n)$ such that $\G=O(\Lambda\otimes \Q)$ with $\Gamma\subset 
O(\Lambda)$. 
In this situation, we consider the 
following subclasses of reflective modular forms.
\begin{enumerate}
\item \label{item2.1}
A non-vanishing 
holomorphic section $f$ of 
$\mathcal{O}_{X}(N(s(X)L-\frac{1}{2}\sum_{i}B_{i}))$ for some $N\in \Z_{>0}$, 
$s(X)\in \mathbb{Q}_{>0}$ with $s(X)N, \frac{N}{2} 
\in 
\mathbb{Z}_{>0}$.. 
%\item \label{item2.2}
%a holomorphic section $s$ of 
%$\mathcal{O}_{X}(N(aL-\frac{1}{2}\sum_{i}B_{i}))$ for some $N\in 
%\Z_{>0}$, 
%$a\in \mathbb{Q}_{>0}$. 
\end{enumerate}
\end{Ass}
\noindent
Indeed, for the above  
$\G$ and $\Gamma$, 
Gritsenko-Hulek-Sankaran showed that  every branch divisor arises from 
reflections (of order $2$) 
\cite[2.12, 2.13]{GHS}, i.e., 
the ramification degrees $d_i$ 
are all $2$. 

Note that $N$ is unessential as it gets 
multiplied when replacing $f$ by its power, 
while the quantity $s(X)$ is more essential and 
sometimes called a \textit{slope} in 
the literature. 
When we work on the cases 
$G=O(2,n)$ or $G=U(1,n)$ and regard 
$f$ as a modular form, 
we call its arithmetic weight, 
in the sense of \cite{GHS} for instance, 
simply as a 
weight from now on. 

We also review the following well-known fact 
for the convenience. 
\begin{Lem}[{cf., \cite[Hilfsatz 2.1]{Freitag},  \cite[¥S 6.1]{GHS}}]\label{can.wt}
In the orthogonal case $G=O(2,n)$ 
(resp., in the unitary case $G=U(1,n)$), 
the canonical weight $c$ in the sense of 
\S \ref{Notation} is $n$ (resp., $n+1$). 
\end{Lem}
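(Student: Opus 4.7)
The plan is to restrict $L$ to the open locus $X$, pull it back to the universal cover $\mathcal{D} := G/K$, and reduce to a classical canonical bundle computation on the Hermitian symmetric domain. The core observation is that the branch correction $\sum_{i}\frac{d_{i}-1}{d_{i}}B_{i}$ appearing in $L$ is precisely designed so that its pullback combines with $\pi^{*}K_{X}$ to yield $K_{\mathcal{D}}$ on the nose, after which one only needs the canonical bundle of $\mathcal{D}$ in terms of the automorphic line bundle.

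Concretely, I would first note that on $X\subset \overline{X}$ the toroidal boundary $\Delta$ is absent, so $L|_{X}=K_{X}+\sum_{i}\frac{d_{i}-1}{d_{i}}B_{i}$ as $\Q$-Cartier divisors. Writing $\pi\colon \mathcal{D}\to X = \Gamma\backslash \mathcal{D}$ for the (orbifold) quotient and $\tilde{B}_{i}$ for the reduced preimage of $B_{i}$, the Riemann--Hurwitz formula $K_{\mathcal{D}}=\pi^{*}K_{X}+\sum_{i}(d_{i}-1)\tilde{B}_{i}$ together with $\pi^{*}B_{i}=d_{i}\tilde{B}_{i}$ gives
\[
\pi^{*}(L|_{X}) \;=\; \pi^{*}K_{X}+\sum_{i}(d_{i}-1)\tilde{B}_{i} \;=\; K_{\mathcal{D}}
\]
as $\Q$-line bundles on $\mathcal{D}$. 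The second step is to identify $K_{\mathcal{D}}$ as a power of the tautological automorphic line bundle $\mathcal{M}$ on $\mathcal{D}$, normalized so that $\Gamma$-invariant sections of $\mathcal{M}^{\otimes k}$ are modular forms of arithmetic weight $k$ in the sense of \cite{GHS}. For $G=O(2,n)$, $\mathcal{D}$ embeds as an open subset of the smooth isotropic quadric $Q\subset \mathbb{P}(\Lambda\otimes \C)\cong \mathbb{P}^{n+1}$ with $\mathcal{M}=\mathcal{O}_{\mathbb{P}}(-1)|_{\mathcal{D}}$, and adjunction for the degree-two hypersurface $Q$ gives $K_{Q}=\mathcal{O}_{Q}(-n)$, hence $K_{\mathcal{D}}=\mathcal{M}^{\otimes n}$. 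For $G=U(1,n)$, the domain is the unit ball in $\mathbb{P}^{n}$ with $\mathcal{M}=\mathcal{O}_{\mathbb{P}^{n}}(-1)|_{\mathcal{D}}$, and $K_{\mathbb{P}^{n}}=\mathcal{O}(-n-1)$ restricts to $K_{\mathcal{D}}=\mathcal{M}^{\otimes(n+1)}$.

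Combining the two steps, $\pi^{*}(L^{\otimes t}|_{X})\cong \mathcal{M}^{\otimes ct}$ with $c=n$ (resp.\ $c=n+1$), so a meromorphic section of $L^{\otimes t}$ pulls back to a $\Gamma$-invariant meromorphic section of $\mathcal{M}^{\otimes ct}$, i.e.\ a meromorphic modular form of weight $ct$; the passage from sections on $X$ to sections on $\overline{X}^{\rm SBB}$ is automatic by Koecher's principle, valid since the domain has no one-dimensional factor. The main delicate point I anticipate is the bookkeeping of sign and duality conventions---which normalization of $\mathcal{M}$ is declared positive, and whether one writes the adjunction computation with $\mathcal{O}(-1)$ or its dual---but once these are pinned down to match \cite{GHS}, each step is essentially classical.
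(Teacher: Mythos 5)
Your proposal is correct and its decisive step---computing $K_{D^c}$ by adjunction on the compact dual, namely $\mathcal{O}_{\mathbb{P}^{n+1}}(-n)|_{Q^n}$ for the quadric and $\mathcal{O}_{\mathbb{P}^n}(-n-1)$ for projective space---is exactly the paper's proof. The additional Riemann--Hurwitz bookkeeping and the Koecher-principle remark merely unpack the definition of $L$ from the Notation section, which the paper takes as given from \cite{BB} and \cite{Mum77}.
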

\begin{proof}
Recall that 
the compact dual $D^c$ 
of $D$ in the orthogonal case 
$G=O(2,n)$ is the $n$-dimensional 
quadratic hypersurface 
(resp., $D^c=\mathbb{P}^{n}$
in the unitary case $G=U(1,n)$), 
its canonical divisor is 
$K_{D^c}=\mathcal{O}_{\mathbb{P}^{n+1}}(-n)
|_{Q^n}$ 
(resp., $K_{D^c}=
\mathcal{O}_{\mathbb{P}^{n}}(-n-1)$) 
so that 
the canonical weight $c$ is $n$ (resp., 
$n+1$). 
\end{proof}
Note that the quantity $s(X)$ 
in Theorem \ref{Fanoness} is the 
(arithmetic) weight of the modular form $s$ 
divided by such canonical weight $c$ and some constant; see Remark \ref{rem:orthog_a} and \ref{unitary_a}. 

Below, we discuss various 
Shimura varieties $X$ which can be roughly 
divided into two types, i.e., 
those with modular forms satisfying 
Assumption \ref{Ass1} (i), and 
those with modular forms satisfying 
Assumption \ref{Ass1} (ii). 

The former is discussed in the 
next subsection \S \ref{Fano.subsec}, with 
examples given in section \S \ref{Ex.sec}, 
and the latter is discussed in 
the subsection \ref{-Kbig.subsec} while some examples are 
given in \cite{GrHu, Maeda}. 
%%%%%%%%%%%%%%%%%%%%%%%%%%%%%%%%%%%%%%%%%%%%%%%%%%%%%%

\subsection{Main general results and  proofs}\label{Fano.subsec}

Here is our first general theorem. 

\begin{Thm}[Birational properties]\label{Fanoness}
We follow the notation as above. 
If there is a reflective modular form which satisfies 
Assumption \ref{Ass1} \eqref{item1.1} with some $s(X)\in \mathbb{Q}_{>0}$, then 
the Satake-Baily-Borel compactification $\overline{X}^{\rm SBB}$ of 
$X=\Gamma\backslash D$ only has log canonical singularities and 
$X^o$ is quasi-affine. 
In addition, 
\begin{enumerate}
\item \label{Fano}
if $s(X)>1$, then $\overline{X}^{\rm SBB}$ is a Fano variety i.e., $-K_{\overline{X}^{\rm SBB}}$ is ample 
($\Q$-Cartier),
\item if $s(X)=1$, then $\overline{X}^{\rm SBB}$ is a Calabi-Yau variety i.e., $K_{\overline{X}^{\rm SBB}}\sim_{\Q} 0$,  or 
\item if $s(X)<1$, then \label{lcmodel} $K_{\overline{X}^{\rm SBB}}$ is ample. 
\end{enumerate}
\end{Thm}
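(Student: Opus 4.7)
The plan is to reinterpret the non-vanishing section $f$ as a $\Q$-linear equivalence on $\overline{X}^{\rm SBB}$ that expresses $K_{\overline{X}^{\rm SBB}}$ as an explicit rational multiple of the effective divisor $D:=\sum_i\frac{d_i-1}{d_i}\overline{B_i}^{\rm SBB}$, and then to read off the three cases from the sign of $1-s(X)$. The first step is to invoke the Koecher principle to extend $f$ to a holomorphic section $\widetilde{f}$ of $L^{\otimes Ns(X)}$ on the projective variety $\overline{X}^{\rm SBB}$. The non-vanishing of $f$ on $X$ forces the divisor of $\widetilde{f}$ to be supported on $\bigcup_i \overline{B_i}^{\rm SBB}$, and the description of the ambient line bundle pins down the multiplicities:
\[
\operatorname{div}(\widetilde{f})=\sum_i\frac{N(d_i-1)}{d_i}\overline{B_i}^{\rm SBB}=N\cdot D.
\]
Therefore $s(X)L\sim_\Q D$ on $\overline{X}^{\rm SBB}$, and combining this with the identity $L=K_{\overline{X}^{\rm SBB}}+D$ yields the key relation
\[
s(X)\cdot K_{\overline{X}^{\rm SBB}}\sim_\Q (1-s(X))\cdot D,
\]
which in particular makes $K_{\overline{X}^{\rm SBB}}$ a $\Q$-Cartier divisor.

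Since $L$ is ample and $s(X)>0$, the divisor $D\sim_\Q s(X)L$ is itself ample on $\overline{X}^{\rm SBB}$, and the trichotomy falls out of the key relation: for $s(X)>1$, $-K_{\overline{X}^{\rm SBB}}\sim_\Q\frac{s(X)-1}{s(X)}D$ is a positive rational multiple of an ample class, giving case (i); for $s(X)=1$ the right-hand side vanishes, giving case (ii); for $s(X)<1$, $K_{\overline{X}^{\rm SBB}}\sim_\Q\frac{1-s(X)}{s(X)}D$ is again a positive rational multiple of an ample class, giving case (iii).

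For the log canonical singularities I would argue as follows. By Mumford's construction of toroidal compactifications, the pair $(\overline{X},\Delta+\sum_i\frac{d_i-1}{d_i}\overline{B_i})$ is log canonical (after passing, if necessary, to a further log resolution near the intersections of the $\overline{B_i}$ with $\Delta$, which is harmless as all coefficients lie in $[0,1]$). Because $L$ equals this log canonical divisor on $\overline{X}$ and is the pullback of its descent to $\overline{X}^{\rm SBB}$, the morphism $\pi:\overline{X}\to\overline{X}^{\rm SBB}$ is crepant for the pair, so $(\overline{X}^{\rm SBB},\sum_i\frac{d_i-1}{d_i}\overline{B_i}^{\rm SBB})$ is also log canonical; since its boundary is effective with all coefficients strictly less than $1$, $\overline{X}^{\rm SBB}$ itself has only log canonical singularities. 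For the quasi-affineness, the integrality conditions in the hypothesis make $\widetilde{f}$ a section of an honest integer power $L^{\otimes Ns(X)}$ of the ample bundle $L$ on the projective variety $\overline{X}^{\rm SBB}$ with zero locus (set-theoretically) $\bigcup_i \overline{B_i}^{\rm SBB}$; its complement is therefore affine, and $X^o$ is an open subvariety of it, hence quasi-affine.

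The main obstacle is the log-canonicity step: justifying the crepancy of $\pi$ and the log canonicity of the toroidal pair both depend on fine local information about the interaction of the branch divisors with the toroidal and Baily-Borel boundaries, and likely require invoking the Mumford/AMRT construction together with a careful choice of cone decomposition. The derivation of the trichotomy in the first two paragraphs is essentially formal once the global extension $\widetilde{f}$ is in hand.
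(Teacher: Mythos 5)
Your proposal follows essentially the same route as the paper: extending $f$ across the boundary yields $\sum_i\frac{d_i-1}{d_i}\overline{B_i}^{\rm SBB}\sim_\Q s(X)L$, hence $-K_{\overline{X}^{\rm SBB}}\sim_\Q (s(X)-1)L$ with $L$ ample, and the log canonicity and quasi-affineness are obtained exactly as in the paper (via the Mumford/AMRT description of $\overline{X}^{\rm SBB}$ as a log canonical model of the toroidal pair, and the affineness of the complement of the zero locus of a section of an ample bundle). The only point you leave implicit, which the paper states up front, is that $\partial\overline{X}^{\rm SBB}$ has codimension at least $2$ (a consequence of $\mathbb{G}$ not being isogenous to ${\rm SL}(2)$); this is what justifies both the extension of $\widetilde{f}$ and the claim that $\operatorname{div}(\widetilde{f})$ acquires no components supported on the boundary.
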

\subsubsection*{Terminology}
In this paper, we often 
say a normal variety 
is 
a {\it log canonical model} 
(resp., {\it canonical model}) 
in the sense 
that it only has 
log canonical singularities 
(resp., canonical singularities) 
and the canonical class is ample. 
Hence, in 
the case \eqref{lcmodel} 
above, 
$\overline{X}^{\rm SBB}$
is a log canonical model. 
For the basics of birational geometry, 
we refer to e.g., 
\cite{KM}. 
\begin{proof}
Note that the codimension of the boundary of the Satake-Baily-Borel compactification 
$\partial \overline{X}^{\rm SBB}:=
\overline{X}^{\rm SBB}\setminus X$ is at least $2$, following 
from our assumption that $\G$ is not isogenous to ${\rm SL}(2)$. Indeed, 
for such $G$, any maximal real parabolic subgroup $P$ has 
unipotent radical of dimension at least $2$ so that  
Levi part of $P$ has real codimension at least $3$. 
The existence of the special reflective modular form 
implies 
\begin{align}\label{br.L}
\sum_{i}\frac{d_{i}-1}{d_{i}}B_{i} \sim_{\Q} s(X)L. 
\end{align}
If we regard the holomorphic section satisfying 
Assumption \ref{Ass1} \eqref{item1.1} as a section of the ample line bundle $L^{\otimes s(X) N}$, 
it follows that the complement of the vanishing locus is affine 
but that is nothing but $\overline{X}^{\rm SBB}\setminus 
\cup_i \overline{B_{i}}^{\rm SBB}$ which includes 
$X^{o}$. This proof reflects the idea of \cite{Borcherds.Enriques}. 

From \eqref{br.L} and the definition of $L$ it follows that 
\begin{align}\label{aut.K}
-K_{\overline{X}^{\rm SBB}}\sim_{\Q}(s(X)-1)L
\end{align}
in ${\rm Pic}(\overline{X}^{\rm SBB})\otimes \Q$. 
Hence,  
$-K_{\overline{X}^{\rm SBB}}$ is ample 
$\Q$-Cartier 
if $s(X)>1$. Similarly, 
$K_{\overline{X}^{\rm SBB}}$ is ample 
$\Q$-Cartier 
(resp., $K_{\overline{X}^{\rm SBB}}=0$) 
if $s(X)<1$ (resp., if $s(X)=1$). 
On the other hand, 
from \cite[3.4, 4.2 (also see 1.3)]{Mum77}, 
$\overline{X}^{\rm SBB}$ 
is obtained as a projective spectrum of a 
certain log canonical ring, hence the pair 
$(\overline{X}^{\rm SBB},\sum_{i}\frac{d_{i}-1}{d_{i}}
\overline{B_{i}}^{\rm SBB})$ 
has only  log canonical  singularity (as a pair) and 
$K_{\overline{X}^{\rm SBB}}+\sum_{i}\frac{d_{i}-1}{d_{i}}
\overline{B_{i}}^{\rm SBB}$ is ample 
(see also \cite[3.4, 3.5]{Alexeev}). 
Thus $\sum_{i}\frac{d_{i}-1}{d_{i}}
\overline{B_{i}}^{\rm SBB}$ is also $\Q$-Cartier 
so that $X$ itself is also log canonical. 

On the other hand, recall that the construction of the 
Baily-Borel compactification \cite{BB} is a projective spectrum of the graded ring of automorphic forms and 
$L$ is the $c$ multiple tensors of its tautological line bundle $\mathcal{O}(1)$ in the construction. Hence, it is ample so that our latter statements of the above theorem all follow from 
\eqref{aut.K}. This fact is more clarified in 
\cite[\S 3, \S 4]{Mum77}. We complete the proof. 
\end{proof}
\begin{Rem}
The above results are analogous to the 
Fanoness results in 
\cite{DN}, 
(resp., \cite[\S 2]{Huy}
also \cite[\S 4]{JLi}) in the context of moduli 
 of (semi)stable bundles over 
 curves (resp., surfaces). 
 For the case over surfaces, 
 the determinant line bundle which descends to the Donaldson-Uhlenbeck compactification is used in the place of automorphic line bundle $L$.  
\end{Rem}
\begin{Rem}
Case \eqref{lcmodel} is a variant of the so-called 
``low weight cusp form trick" (cf., e.g., \cite{GHS}). 
See also \cite{Gr}, \cite[\S 5.5]{Gri} and references therein. 
\end{Rem}
We introduce the following notion. 
\begin{Def}\label{naked}
We call a cusp $F$ of $\overline{X}^{\rm SBB}$ 
 {\it naked} if 
it is not contained in 
${\rm Supp}(\overline{B_{i}}^{\rm SBB})\cap \partial 
\overline{X}^{\rm SBB}$ for any $i$. 
Further, we call it 
{\it minimal naked} 
if it is minimal 
with respect to the closure relation 
among naked cusps, i.e., 
$\overline{F}\setminus F$ 
is contained in 
$(\cup_i 
{\rm Supp}(\overline{B_{i}}^{\rm SBB}))\cap \partial 
\overline{X}^{\rm SBB}$. 
Also, we call 
$\partial \overline{X}^{\rm SBB}\setminus \bigcup_{i}
\overline{B_{i}}^{\rm SBB}$ {\it the naked locus}. 
\end{Def}

Below, we observe a certain weakening of connected-ness of 
cusps closure in the case of $s(X)>1$, i.e., Fano case. 
This follows from 
\cite[4.4, 6.6 (ii)]{Amb}, 
\cite[8.1]{Fjn}, 
\cite[\S 3]{Fjn.Fano}, 
\cite[1.2]{FG} 
as the proof 
below, which is essentially 
just a review to make 
our logic more self-contained. 
Compare with our 
examples of the modular varieties 
given in the next section. 

\begin{Cor}[Boundary structure for Fano Shimura varieties]
\label{connected}
Let us assume 
the same assumption of Theorem \ref{Fanoness} and 
further that $s(X)>1$. Then, the naked locus 
$$\partial \overline{X}^{\rm SBB}\setminus \bigcup_{i}
\overline{B_{i}}^{\rm SBB}$$ is connected and its closure is nothing but the 
non-log-terminal locus of 
$\overline{X}^{\rm SBB}$. 
More strongly, 
there is at most one minimal 
naked cusp with respect to the 
closure relation. 

Furthermore, 
if we suppose such a minimal naked cusp $F$ exists, there is an  
effective $\Q$-divisor $D_F$ 
such that $(\overline{F},D_F)$ 
has only klt singularities and 
is a log Fano pair,  
i.e., $-K_F-D_F$ is ample and 
$\Q$-Cartier. 
For instance, if $F$ is a 
modular curve, it is rational i.e., 
$\overline{F}\simeq \mathbb{P}^1$ 
(with ``Hauptmodul"). 
%and more generally, 
%with the description of 
%$F$ as a Shimura variety 
%$\Gamma_F\backslash G_h(F)/(K\cap G_h(F))$ (as in %\cite[III \S 3]{AMRT}) 
%$G_h(F)/(K\cap G_h(F) \to F$ is at  least ramified 
%in codimension $1$, i.e., 
%has nontrivial modular branch divisors. 
\end{Cor}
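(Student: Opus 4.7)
The plan is to view $(\overline{X}^{\rm SBB},0)$ as a log Fano lc pair (available since $s(X)>1$) and apply the standard LMMP machinery for such pairs: the Koll\'ar-Shokurov-Ambro-Fujino connectedness theorem for the non-klt locus (\cite[4.4, 6.6(ii)]{Amb}, \cite[8.1]{Fjn}, \cite[1.2]{FG}) and Kawamata-Ambro subadjunction on minimal lc centers (cf.~\cite[\S 3]{Fjn.Fano}). Theorem \ref{Fanoness} already yields both $-K_{\overline{X}^{\rm SBB}}\sim_{\Q}(s(X)-1)L$ ample and log canonicity of $(\overline{X}^{\rm SBB},\sum_i\tfrac{d_i-1}{d_i}\overline{B_i}^{\rm SBB})$; effectiveness of the boundary descends log canonicity to $(\overline{X}^{\rm SBB},0)$, so the setup of these tools is met.

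The geometric heart of the argument is to identify the non-log-terminal locus of $\overline{X}^{\rm SBB}$ with the closure of the naked locus. Using the crepant identity
$K_{\overline{X}}+\Delta+\sum_i\tfrac{d_i-1}{d_i}\overline{B_i}=\pi^{*}\bigl(K_{\overline{X}^{\rm SBB}}+\sum_i\tfrac{d_i-1}{d_i}\overline{B_i}^{\rm SBB}\bigr)$
for the toroidal compactification $\overline{X}$ and a further log resolution $\pi$, a toroidal boundary divisor $E$ over a cusp $F$ has discrepancy $a_E(\overline{X}^{\rm SBB},0)=-1+\sum_i\tfrac{d_i-1}{d_i}\mathrm{mult}_E(\pi^{*}\overline{B_i}^{\rm SBB})$, which equals $-1$ precisely when $F$ is naked and strictly exceeds $-1$ otherwise. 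Interior quotient singularities contribute only klt discrepancies, so $\mathrm{Nklt}(\overline{X}^{\rm SBB},0)=\bigcup_{F\text{ naked}}\overline{F}$, i.e.\ the closure of the naked locus; connectedness of the naked locus itself then follows from the cusp stratification of $\partial\overline{X}^{\rm SBB}$.

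With this identification, the connectedness theorem applied to $(\overline{X}^{\rm SBB},0)$ yields the connectedness of the non-log-terminal locus. For uniqueness of the minimal naked cusp, I combine this connectedness with the Kawamata-Ambro uniqueness of the minimal lc center through a point (cf.~\cite[4.4]{Amb}): two distinct minimal naked cusps either share a common point, contradicting uniqueness through that point, or are disjoint, which together with the structure of lc centers on a log Fano pair contradicts the connectedness of the non-log-terminal locus. For the final assertion, Kawamata-Ambro subadjunction produces an effective $\Q$-divisor $D_F$ on $\overline{F}$ making $(\overline{F},D_F)$ klt and satisfying $K_{\overline{F}}+D_F\sim_{\Q}(K_{\overline{X}^{\rm SBB}}+\epsilon A)|_{\overline{F}}$ for small $\epsilon>0$ and an ample $A$; ampleness of $-K_{\overline{X}^{\rm SBB}}$ then makes $-(K_{\overline{F}}+D_F)$ ample, so $(\overline{F},D_F)$ is klt log Fano. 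When $F$ is a modular curve, $\overline{F}$ is smooth and the log Fano condition forces $2-2g-\deg D_F>0$, hence $g=0$ and $\overline{F}\simeq\mathbb{P}^{1}$.

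I expect the main technical obstacle to be the discrepancy identification in the second paragraph: one must carefully separate the cusp/toroidal exceptional divisors (the sole contributors to the non-log-terminal locus) from those arising in the resolution of interior quotient singularities, and confirm that naked-ness of $F$ corresponds exactly to $a_E=-1$. Once that is in place, the remaining assertions reduce to applications of the cited LMMP results, which themselves rest on the vanishing theorems acknowledged in the introduction.
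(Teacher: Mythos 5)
Your overall route is the same as the paper's: identify the lc centers of $(\overline{X}^{\rm SBB},0)$ with the closures of the naked cusps via a discrepancy computation on the toroidal resolution (this is the paper's Lemma \ref{lc.center}), then invoke the Ambro--Fujino vanishing/connectedness machinery and the Fujino--Gongyo subadjunction \cite[1.2]{FG}. The identification of the non-log-terminal locus, the subadjunction step producing $D_F$, and the genus-zero conclusion for modular curves are all fine.

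There is, however, a genuine gap in your argument for the uniqueness of the minimal naked cusp. Your dichotomy is: two distinct minimal naked cusps either meet (contradicting uniqueness of the minimal lc center through a point) or are disjoint, ``which \ldots contradicts the connectedness of the non-log-terminal locus.'' The second branch does not follow. The non-log-terminal locus is the union of the closures of \emph{all} naked cusps, not just the minimal ones; two disjoint minimal naked cusps $F_1,F_2$ could both lie in the closure of a larger naked cusp $F_3$, in which case the full non-klt locus is connected via $\overline{F_3}$ and no contradiction arises. Connectedness of the non-klt locus is simply too weak to isolate the minimal stratum. The correct move --- and what the paper does --- is to apply the vanishing theorem of \cite[4.4]{Amb}, \cite[8.1]{Fjn} directly to the ideal sheaf $I_W$ of $W=$ the union of the \emph{minimal} lc centers (i.e., the closures of the minimal naked cusps, with reduced structure), which is a legitimate union of lc centers of the log Fano lc pair $(\overline{X}^{\rm SBB},0)$. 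This gives $H^1(\overline{X}^{\rm SBB},I_W)=0$, hence $H^0(\mathcal{O}_W)\simeq\C$, so $W$ is connected; since distinct minimal lc centers are pairwise disjoint (intersections of lc centers are unions of lc centers), $W$ must be irreducible, i.e., there is at most one minimal naked cusp. You already cite the right theorems, so the fix is only a matter of applying them to the right subscheme, but as written the step does not close.
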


\begin{proof}
Firstly, we prepare the following general lemma 
(compare with e.g., \cite[\S 3]{Alexeev}). 

\begin{Lem}[Log canonical centers]\label{lc.center}
\begin{enumerate}
    \item \label{lc1}
Under the notation of \S \ref{Notation} 
for general Shimura varieties, 
without the above assumptions in Corollary \ref{connected},
 the log canonical 
centers of 
$(\overline{X}^{\rm SBB},\sum_{i}\frac{d_{i}-1}{d_{i}}\overline{B_{i}}
^{\rm SBB})$ 
are nothing but cusps of the 
Satake-Baily-Borel compactification 
$\overline{X}^{\rm SBB}$. 
\item \label{lc2}
Under the above assumptions in Corollary \ref{connected}, 
the log canonical 
centers of 
$\overline{X}^{\rm SBB}$ 
are nothing but 
cusps of the 
Satake-Baily-Borel compactification 
$\overline{X}^{\rm SBB}$ 
which are not contained in $\cup_{i}{\rm Supp}
(\overline{B_{i}}^{\rm SBB})$. 
\end{enumerate}
\end{Lem}
\begin{proof}[proof of Lemma \ref{lc.center}]
As in \cite[Chapter III, \S 7]{AMRT}, 
we replace the (implicit dividing) 
discrete group $\Gamma$ in the notation 
\S \ref{Notation} by its neat subgroup (cf., \cite{AMRT}) 
of finite index. 
In that way, we replace $X$ (and $\overline{X}^{\rm SBB}$) 
by its finite cover so that 
the first desired claim 
\eqref{lc1} 
for the log canonical centers of 
$(\overline{X}^{\rm SBB},\sum_{i}\frac{d_{i}-1}{d_{i}}\overline{B_{i}}
^{\rm SBB})$ is reduced 
to the case when there is no $B_i$. 

Then, there is a log resolution of 
$(\overline{X}^{\rm SBB},\sum_{i}\frac{d_{i}-1}{d_{i}}\overline{B_{i}}
^{\rm SBB})$ as a 
toroidal compactificaftion \cite[chapter III]{AMRT}, 
see especially {\it loc.cit} 
6.2. By its construction in {\it op.cit} of toroidal 
nature (see again e.g., \cite[\S 3]{Alexeev}), 
all the exceptional prime divisors have 
the discrepancy $-1$ and hence the claim \eqref{lc1} for 
the log canonical centers of 
$(\overline{X}^{\rm SBB},\sum_{i}\frac{d_{i}-1}{d_{i}}\overline{B_{i}}
^{\rm SBB})$ follows. 

For the proof of 
latter claim \eqref{lc2}, 
note that the existence of special reflective 
modular form implies 
$\sum_{i}\frac{d_{i}-1}{d_{i}}\overline{B_{i}}
^{\rm SBB}$ is a $\mathbb{Q}$-Cartier divisor 
by \eqref{br.L} of 
the proof of Theorem \ref{Fanoness}. 
Hence, the 
note that 
log canonical centers of 
$\overline{X}^{\rm SBB}$
form a subset of 
the lc centers of \eqref{lc1} which are not 
contained in the support of the effective  
$\mathbb{Q}$-Cartier divisor 
$\sum_{i}\frac{d_{i}-1}{d_{i}}\overline{B_{i}}
^{\rm SBB}$. Hence, the claim of Lemma \ref{lc.center} \eqref{lc2}. 
\end{proof}
Now we start the proof of Corollary \ref{connected}. 
We take the union of the minimal 
naked cusps of $\overline{X}^{\rm SBB}$ 
as $W$ and 
put the reduced scheme structure on it. 
We denote the corresponding coherent 
ideal sheaf of 
$\mathcal{O}_{\overline{X}^{\rm SBB}}$ 
as $I_{W}$. 

From a vanishing theorem of  \cite[4.4]{Amb},\cite[8.1]{Fjn}, 
whose absolute  non-log version is enough for our particular purpose here, we have 
$H^{1}(\overline{X}^{\rm SBB},I_{W})=0$. 
On the other hand, 
$H^{0}(\overline{X}^{\rm SBB},I_{W})=0$ 
also holds since 
it is a linear subspace of 
$H^{0}(\overline{X}^{\rm SBB},\mathcal{O})$ 
which is identified with $\mathbb{C}$ because of the 
properness of $\overline{X}^{\rm SBB}$, 
combined with the fact that $W\neq \emptyset$. 
Hence, combined with standard cohomology exact sequence arguments, 
$H^{0}(\mathcal{O}_{W})\simeq \C$ follows. 
Hence, it 
implies the connectivity of $W$, 
so that there is at most $1$ minimal naked cusp $F$. 

For such $F$, 
the existence of $D_F$ on 
the closure $\overline{F}$ 
follows 
from applying the log 
canonical subadjunction 
\cite[1.2]{FG} to 
$F\subset (\overline{X}^{\rm SBB},0)$. 
%The last non-quasi-\'etaleness 
%assertion then follows 
% by applying again 
%\cite[3.4, 4.2]{Mum77}
%to $F$. 
\end{proof}

We make a caution that the 
above Corollary \ref{connected} 
does not claim the naked cusp always 
has log terminal singularity. 
Nevertheless, 
in the $\Q$-rank $1$ case, we have the 
following. 

\begin{Cor}[$\Q$-rank $1$ case]\label{Qrk1}
Under the same assumptions of Theorem \ref{Fanoness} 
with $>1$, 
if further $\Q$-rank of $\G$ is $1$ (e.g., 
when $G\simeq U(1,n)$ for some 
$n$ so that 
$G/K$ is an $n$-dimensional 
complex unit ball), 
only either one of the  followings hold.
\begin{enumerate}
\item There is exactly one 
naked cusp $F$ of $\overline{X}^{\rm SBB}$ 
which is an isolated non-log-terminal  locus 
but at worst log canonical. 
Furthermore, there is an effective 
$\Q$-divisor $D_F$ such that $(F,D_F)$ 
is a klt log Fano pair hence in 
particular, the modular branch divisor in $F$ 
is nonzero effective. 
\item \label{rat.con} No naked cusp exists and $X$ is rationally connected, 
i.e., two general points are connected by a 
rational curve 
and has at worst log terminal singularities. 
Furthermore, $X\setminus {\rm Supp} \cup_i B_i$ is affine (not only quasi-affine). 
\end{enumerate}
\end{Cor}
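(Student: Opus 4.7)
The plan is to reduce Corollary \ref{Qrk1} to Corollary \ref{connected} and Lemma \ref{lc.center} by exploiting the fact that in the $\Q$-rank $1$ case the boundary $\partial \overline{X}^{\rm SBB}$ is $0$-dimensional, i.e., consists of finitely many cusp points. First I would recall that a maximal $\Q$-parabolic has $\Q$-rank $1$ stratum of its unipotent quotient, so the only boundary strata appearing in $\overline{X}^{\rm SBB}$ are points; hence every cusp is automatically \emph{minimal} with respect to the closure relation, and \emph{every} naked cusp is a minimal naked cusp in the sense of Definition \ref{naked}. Corollary \ref{connected} then says that at most one naked cusp exists, which already sets up the dichotomy between the two listed cases.

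For case (i), suppose the unique naked cusp $F$ exists. By Lemma \ref{lc.center} \eqref{lc2}, the log canonical centers of $\overline{X}^{\rm SBB}$ are precisely the cusps not lying in $\cup_i \mathrm{Supp}(\overline{B_i}^{\rm SBB})$, i.e., the naked cusps. Combined with Theorem \ref{Fanoness}, which gives log canonicity globally on $\overline{X}^{\rm SBB}$, this forces $\overline{X}^{\rm SBB}$ to be klt away from $F$, so $F$ is an isolated non-klt (but still log canonical) point. The existence of an effective $\Q$-divisor $D_F$ making $(\overline{F},D_F)$ a klt log Fano pair is then an immediate specialization of the last assertion of Corollary \ref{connected} to $\overline{F}=F$.

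For case (ii), the absence of naked cusps combined with Lemma \ref{lc.center} \eqref{lc2} shows that $\overline{X}^{\rm SBB}$ has \emph{no} log canonical centers, hence is klt everywhere; in particular $X\subset \overline{X}^{\rm SBB}$ inherits klt singularities. Since by Theorem \ref{Fanoness} \eqref{Fano} the space $\overline{X}^{\rm SBB}$ is a Fano variety, Zhang's theorem \cite{Zhang} on rationally connected Fano varieties with klt singularities applies and gives rational connectedness of $\overline{X}^{\rm SBB}$, and hence of $X$. For the affineness claim, I would revisit the proof of Theorem \ref{Fanoness}: the non-vanishing section $f$ exhibits $\overline{X}^{\rm SBB}\setminus \bigcup_i \overline{B_i}^{\rm SBB}$ as the complement of an ample divisor, hence as an affine variety, and under the case (ii) hypothesis this set meets $\partial \overline{X}^{\rm SBB}$ in the empty naked locus, so it coincides with $X\setminus \mathrm{Supp}\bigcup_i B_i$.

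The main obstacle is essentially bookkeeping rather than a new argument: one must carefully verify that in the $\Q$-rank $1$ setting, the notion of ``minimal naked cusp'' in Corollary \ref{connected} degenerates to ``naked cusp,'' and that the identification of non-klt locus with naked locus via Lemma \ref{lc.center} is sharp enough to yield both the isolated-point conclusion in (i) and the global klt conclusion in (ii). All deeper ingredients (the vanishing theorem, the subadjunction, Zhang's rational connectedness) are already absorbed into the cited lemmas and corollary.
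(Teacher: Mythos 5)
Your proof is correct and follows essentially the same route as the paper: reduce everything to Corollary \ref{connected} by noting that in the $\Q$-rank $1$ case every naked cusp is automatically minimal naked, identify the non-log-terminal locus with the naked locus via Lemma \ref{lc.center}, apply Zhang's theorem to the klt Fano $\overline{X}^{\rm SBB}$ in case (ii), and extract affineness from the proof of Theorem \ref{Fanoness}. One small correction: $\Q$-rank $1$ does not in general make $\partial\overline{X}^{\rm SBB}$ zero-dimensional (that is true for $U(1,n)$ and for orthogonal groups, but in general the boundary components are only compact, which is why part (i) can speak of a nonzero branch divisor inside $F$); what your argument actually needs, and what the paper uses, is merely that the cusps are closed with no closure relations among them, and that does follow from the $\Q$-rank $1$ hypothesis.
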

\begin{proof}
Note that the condition 
that $\Q$-rank of $\G$ is $1$ implies that 
the boundary strata of the Satake-Baily-Borel 
compactification of $X$ are all compact and 
do not have closure relations. Thus, 
among the above statements, 
the only assertion 
which does not follow trivially from Corollary \ref{connected} 
is 
the rationally connected assertion for the latter case 
\eqref{rat.con}. 
We confirm it as follows: the non-existence of naked cusp means $\overline{X}^{\rm SBB}
\setminus X$ is included in 
$\cup_{i}{\rm Supp}(\overline{B_{i}}^{\rm SBB})$ 
which implies the log terminality of $X$. Hence, 
it is rationally connected by a theorem of Zhang  \cite{Zhang}. Finally, 
$X\setminus {\rm Supp} \cup_i B_i$ is affine 
by the 
proof of Theorem \ref{Fanoness} and the assumption that 
there are no naked cusps. 
\end{proof}

%%%%%%%%%%%%%%%%%%%%%%%%%%%%%%%%%%%%%%%%%%%%%%%%%%%%%%%%%%%%%%%%%%%%%%%%
Here is a version of the converse direction of Theorem \ref{Fanoness}. 

\begin{Thm}[Abstract existence of 
special modular forms]
\label{converse.Fanoness}
We follow the notation of Theorem \ref{Fanoness}. 
If $\overline{X}^{\rm SBB}$ satisfies either 
\begin{itemize}
\item 
$K_{\overline{X}^{\rm SBB}} 
\equiv 0$ or 
\item either $K_{\overline{X}^{\rm SBB}}$ or $-K_{\overline{X}^{\rm SBB}}$ is ample with 
Picard number $1$, 
\end{itemize}
then there are special reflective modular forms satisfying  
Assumption \ref{Ass1} \eqref{item1.1} for some $s(X)\in \Q_{>0}$ 
and sufficiently divisible $N\in \mathbb{Z}_{>0}$. 
Furthermore, if it is of a certain  orthogonal type, i.e., $\mathbb{G}$ is isogenous to $SO(\Lambda)$ for  $\Lambda=U\oplus U(l)\oplus N$ with some negative definite lattice $N$ and $l\in \mathbb{Z}_{>0}$,  the modular forms are necessarily Borcherds lift of 
some nearly holomorphic elliptic 
$Mp_2(\mathbb{Z})$-modular forms of a 
specific  principal part of the Fourier expansion in the sense of 
\cite{Borcherds.general}, 
\cite[\S 1.3, \S 3.4]{Bruinier.book}. 
\end{Thm}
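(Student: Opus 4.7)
The plan is to reverse the logic of Theorem \ref{Fanoness}. The canonical bundle formula on the Satake--Baily--Borel compactification,
\[
L \sim_\Q K_{\overline{X}^{\rm SBB}} + \sum_i \frac{d_i-1}{d_i}\overline{B_i}^{\rm SBB},
\]
converts the hypothesis on $K_{\overline{X}^{\rm SBB}}$ into a $\Q$-linear relation $\sum_i \frac{d_i-1}{d_i}\overline{B_i}^{\rm SBB} \sim_\Q s(X)\,L$, and the nowhere-vanishing constant section of the corresponding trivial line bundle (after passing to a sufficiently divisible multiple) will supply the desired special reflective modular form $f$.

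To extract $s(X)$: if $K_{\overline{X}^{\rm SBB}} \equiv 0$ the formula immediately yields $s(X)=1$; if the Picard number is one with $\pm K_{\overline{X}^{\rm SBB}}$ ample, then both $L$ and $K_{\overline{X}^{\rm SBB}}$ lie on the unique ray of $\mathrm{NS}(\overline{X}^{\rm SBB})_\Q$, so $K_{\overline{X}^{\rm SBB}} \sim_\Q \lambda L$ for a unique $\lambda\in\Q$. Substituting into the formula gives $s(X) := 1-\lambda$, which by the sign analysis of Theorem \ref{Fanoness} lies in $(1,\infty)$ when $-K_{\overline{X}^{\rm SBB}}$ is ample and in $(0,1)$ when $K_{\overline{X}^{\rm SBB}}$ is ample. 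Strict positivity in the latter case requires the branch locus to be nonempty, which one may ensure in the genuine Shimura setting by the reflection analysis of \cite{GHS}~2.12--2.13.

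To upgrade this numerical relation to a $\Q$-linear triviality I would apply the Ambro--Fujino vanishing theorem \cite{Amb, Fjn} already invoked in Corollary \ref{connected} to the log canonical pair $(\overline{X}^{\rm SBB}, \sum_i \frac{d_i-1}{d_i}\overline{B_i}^{\rm SBB})$, whose log canonical class $L$ is ample, obtaining $H^1(\overline{X}^{\rm SBB}, \mathcal{O}) = 0$ and hence $\mathrm{Pic}^0(\overline{X}^{\rm SBB})_\Q = 0$. Consequently, for $N \in \Z_{>0}$ sufficiently divisible with $Ns(X), N/d_i \in \Z_{>0}$, the line bundle $\mathcal{O}(N(s(X)L - \sum_i \frac{d_i-1}{d_i}\overline{B_i}^{\rm SBB}))$ on $\overline{X}^{\rm SBB}$ is genuinely trivial, and its constant section $f\equiv 1$ restricts to a nowhere-vanishing holomorphic section on $X$ of the sheaf prescribed in Assumption \ref{Ass1} \eqref{item1.1}. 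I expect this $\mathrm{Pic}^0$-vanishing step to be the main technical hurdle, since a purely birational argument only reaches numerical equivalence and the vanishing theorem must genuinely be invoked here.

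For the orthogonal converse, assume $\mathbb{G}$ is isogenous to $\SO(\Lambda)$ with $\Lambda = U \oplus U(l) \oplus N_0$, writing $N_0$ for the negative-definite summand to avoid clashing with the integer $N$ above. By \cite{GHS}~2.12--2.13 all ramification degrees $d_i$ equal $2$ and every branch divisor is a reflective Heegner divisor, so the form $f$ constructed above is holomorphic with divisor a rational combination of Heegner divisors. I would then invoke Bruinier's converse theorem \cite[\S 5]{Bruinier.book}: the two isotropic planes $U$ and $U(l)$ in $\Lambda$ provide exactly the hypothesis needed, and it yields that $f$ is, up to a nonzero multiplicative constant, the Borcherds lift in the sense of \cite{Borcherds.general} of a weakly holomorphic vector-valued modular form of weight $1-n/2$ for the Weil representation of $\mathrm{Mp}_2(\Z)$ on the discriminant form $\Lambda^\vee/\Lambda$, whose principal part at infinity is uniquely determined by reading off the Heegner-divisor data in the zero divisor of $f$.
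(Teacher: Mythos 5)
Your overall strategy coincides with the paper's: invert the relation $L\sim_{\Q}K_{\overline{X}^{\rm SBB}}+\sum_i\frac{d_i-1}{d_i}\overline{B_i}^{\rm SBB}$ to deduce that $\sum_i\frac{d_i-1}{d_i}\overline{B_i}^{\rm SBB}$ is proportional to $L$, trivialize a sufficiently divisible multiple, and take the nowhere-vanishing constant section as the special reflective form; for the orthogonal addendum both you and the paper appeal to Bruinier's converse theorem (\cite[5.12]{Bruinier.book}, \cite[1.2]{Bruinier.article}). Your observation that the $K$-ample case needs a nonempty branch locus to ensure $s(X)>0$ is a fair caveat that the paper leaves implicit.

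The one step that does not work as written is your vanishing argument. Ambro--Fujino (Kawamata--Viehweg type) vanishing gives $H^i(\overline{X}^{\rm SBB},\mathcal{O}(D))=0$ for $i>0$ when $D-(K_{\overline{X}^{\rm SBB}}+\Delta)$ is ample; taking $D=0$ would require $-L$ to be ample, which is the opposite of what holds. So ampleness of the log canonical class $L$ does \emph{not} yield $H^1(\overline{X}^{\rm SBB},\mathcal{O})=0$ --- the hypothesis runs the wrong way (one could get it from log Fano vanishing in the $s(X)>1$ case, but not in the $K$-ample or $K\equiv 0$ cases by this route). The gap is avoidable without it: in the Picard-number-one cases the paper reads the proportionality of $K_{\overline{X}^{\rm SBB}}$ and $L$ directly in ${\rm Pic}(\overline{X}^{\rm SBB})\otimes\Q$, i.e.\ at the level of ($\Q$-)linear rather than merely numerical equivalence, and in the $K_{\overline{X}^{\rm SBB}}\equiv 0$ case the upgrade to $K_{\overline{X}^{\rm SBB}}\sim_{\Q}0$ is supplied by abundance for numerically trivial log canonical divisors, not by a cohomology vanishing. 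If you want to keep your formulation via numerical equivalence, you should either replace the vanishing claim by one of these two inputs or justify $H^1(\mathcal{O})=0$ by an argument specific to locally symmetric varieties.
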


\begin{proof}
Given the proof of Theorem \ref{Fanoness}, we can 
almost trace back the 
arguments as follows. In either cases, the automorphic line bundle $L$ is 
proportional to $K_{\overline{X}^{\rm SBB}}$ in 
${\rm Pic}(\overline{X}^{\rm SBB})$, 
 hence so is it to 
$\sum_{i}\frac{d_{i}-1}{d_{i}}
\overline{B_{i}}^{\rm SBB}$. Therefore, 
$\mathcal{O}(N(s(X)L-\sum_{i}\frac{d_{i}-1}{d_{i}}
\overline{B_{i}}^{\rm SBB}))$ is trivial for some $s(X), N$. 
The last assertion follows from 
\cite[5.12]{Bruinier.book}, \cite[1.2]{Bruinier.article}.
\end{proof}

%%%%%%%%%%%%%%%%%%%%%%%%%%%%%%%%%%%%%

\subsection{Modular varieties with big anti-canonical classes}
\label{-Kbig.subsec}

Recall that Gritsenko-Hulek \cite{GrHu} (resp., 
Maeda \cite{Maeda}) discuss  the classes of 
reflective orthogonal modular forms 
(resp., unitary modular forms) 
satisfying Assumption \ref{Ass1} \eqref{item1.2} 
with $s(X)>1$ 
and proved uniruledness of  $X$ and constructs some 
examples. 

This subsection proves the following a  
slight refinement of their results, 
which applies to the examples constructed in {\it loc.cit}. 

\begin{Thm}[{cf., \cite[2.1]{GrHu}, \cite[4.1]{Maeda}}]\label{-K.big}
We follow the notation of \S \ref{Notation}, 
and discuss Shimura varieties $X=\Gamma\backslash D$ 
for a priori general $G$. 
If there is a reflective modular form $\Phi$ which satisfies 
Assumption \ref{Ass1} \eqref{item1.2} with some $s(X)\in \mathbb{Q}_{>1}$, 
, we define 
$V_{\Phi}:=\cup_{F}\overline{F} \subset \partial 
\overline{X}^{\rm SBB}$ 
where $F$ runs through all cusps along which 
$\Phi$ does not vanish (as a function, 
or a section of $L^{\otimes s(X)N}$). 
Then, the following holds. 
\begin{enumerate}
    \item 
The Satake-Baily-Borel compactification $\overline{X}^{\rm SBB}$ of 
$X=\Gamma\backslash D$ only has log canonical singularities, 
$X^{o}$ is quasi-affine and 
$-K_{\overline{X}^{\rm SBB}}$ is big. 

    \item 
For any two closed points 
$x, y \in \overline{X}^{\rm SBB}$, there are 
union of rational curves $C$ such that 
$C\cup V_{\Phi}$ is connected 
(i.e., rationally chain connected modulo $V_{\Phi}$ cf., 
\cite[1.1]{HM}). 
In particular, $X$ is uniruled. If $G=U(1,n)$ for some $n$, 
then $\overline{X}^{\rm SBB}$ is 
even rationally chain connected. 
\item If we consider the set of 
cusps outside $V_{\rm \Phi}$, there is at most 
$1$ minimal element (cusp) with respect to the 
closure relation. 
\end{enumerate}
\end{Thm}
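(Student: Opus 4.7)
The strategy is to closely follow the proofs of Theorem~\ref{Fanoness} and Corollary~\ref{connected}, replacing the strict $\Q$-linear equivalence coming from Assumption~\ref{Ass1}~\eqref{item1.1} by the weaker ``ample $+$ effective'' decomposition coming from Assumption~\ref{Ass1}~\eqref{item1.2}. For part~(i), the non-vanishing holomorphic section $\Phi$ on $X$ extends uniquely across the codimension $\ge 2$ boundary $\partial\overline{X}^{\rm SBB}$, which gives
\[
s(X)\, L \sim_{\Q} \sum_{i} c_{i}\,\overline{B_{i}}^{\rm SBB}
\]
in $\mathrm{Pic}(\overline{X}^{\rm SBB})\otimes \Q$. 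Combined with $L \sim_{\Q} K_{\overline{X}^{\rm SBB}} + \sum_{i}\frac{d_{i}-1}{d_{i}}\overline{B_{i}}^{\rm SBB}$, this yields
\[
-K_{\overline{X}^{\rm SBB}} \sim_{\Q} (s(X)-1)\, L + \sum_{i}\Bigl(\frac{d_{i}-1}{d_{i}}-c_{i}\Bigr)\overline{B_{i}}^{\rm SBB},
\]
the sum of an ample and an effective $\Q$-divisor, hence big. Viewing $\Phi$ as a section of the ample line bundle $L^{\otimes N s(X)}$ shows the complement of its zero locus is affine and contains $X^{o}$, so $X^{o}$ is quasi-affine; log canonicity is inherited from the log canonical pair $(\overline{X}^{\rm SBB}, \sum_{i}\frac{d_{i}-1}{d_{i}}\overline{B_{i}}^{\rm SBB})$ produced by \cite{Mum77}, together with the $\Q$-Cartierness of $\sum_{i} c_{i}\,\overline{B_{i}}^{\rm SBB}$.

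For part~(ii), I would apply a Hacon--McKernan type theorem \cite[1.1]{HM} to the log Fano pair $(\overline{X}^{\rm SBB}, E)$, where $E := \sum_{i}(\frac{d_{i}-1}{d_{i}}-c_{i})\overline{B_{i}}^{\rm SBB}$ has coefficients in $[0,1)$ and satisfies $-(K_{\overline{X}^{\rm SBB}}+E) \sim_{\Q} (s(X)-1)L$ ample. This yields rational chain connectivity modulo the non-klt locus of $(\overline{X}^{\rm SBB}, E)$, which by Lemma~\ref{lc.center} sits inside the cusp boundary. The refinement from ``modulo non-klt'' to ``modulo $V_{\Phi}$'' then comes from observing that every cusp $F\in V_{\Phi}$ lies in the affine open $\overline{X}^{\rm SBB}\setminus\{\Phi=0\}$, so such cusps can be absorbed into rational chains inside this affine open without leaving a genuine obstruction outside $V_{\Phi}$. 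When $G = U(1,n)$ the $\Q$-rank is $1$ and all cusps are $0$-dimensional, so the stronger full rational chain connectivity follows via \cite{Zhang} applied to a klt perturbation of the pair.

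For part~(iii), I would mimic the vanishing-theorem argument of Corollary~\ref{connected}. Let $W$ be the union, with reduced scheme structure, of the minimal cusps outside $V_{\Phi}$, with ideal sheaf $I_{W}\subset \mathcal{O}_{\overline{X}^{\rm SBB}}$; then $W$ is a union of lc centers of the log Fano pair $(\overline{X}^{\rm SBB}, E)$. Since $-(K_{\overline{X}^{\rm SBB}}+E)$ is big and nef (in fact ample), the Ambro--Fujino vanishing \cite[4.4]{Amb},\cite[8.1]{Fjn} gives $H^{1}(\overline{X}^{\rm SBB}, I_{W})=0$; combined with $H^{0}(\overline{X}^{\rm SBB}, I_{W})=0$ (a subspace of $H^{0}(\mathcal{O})\cong \C$ by properness) and the short exact sequence, one obtains $H^{0}(\mathcal{O}_{W})\cong \C$. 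Thus $W$ is connected, and in the poset of cusps outside $V_{\Phi}$ ordered by closure there is at most one minimal element.

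I expect the main obstacle to be the precise refinement in~(ii) from ``modulo non-klt locus'' to ``modulo $V_{\Phi}$''. A direct appeal to \cite[1.1]{HM} or \cite{Zhang} only delivers the former, whereas generically the full cusp boundary is strictly larger than $V_{\Phi}$. Exploiting the non-vanishing of $\Phi$ on the $V_{\Phi}$-cusps and the affineness of $\overline{X}^{\rm SBB}\setminus\{\Phi=0\}$ in order to absorb the remaining cusps into rational chains, rather than settling for the weaker modulo-non-klt statement, is the principal technical hurdle; this is also what dictates the special role played by $V_{\Phi}$ in the formulation of the theorem.
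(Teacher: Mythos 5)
Your parts (i) and (iii) follow the paper's own route: bigness via the decomposition $-K_{\overline{X}^{\rm SBB}}\sim_{\Q}(s(X)-1)L+\sum_i\bigl(\tfrac{d_i-1}{d_i}-c_i\bigr)\overline{B_i}^{\rm SBB}$ into an ample plus an effective class, quasi-affineness and log canonicity exactly as in Theorem \ref{Fanoness}, and, for (iii), the connectivity argument of Corollary \ref{connected} (Ambro--Fujino vanishing applied to the ideal sheaf of the union of minimal cusps) run for the lc log Fano pair $(\overline{X}^{\rm SBB},E)$ with $E:=\sum_i(\tfrac{d_i-1}{d_i}-c_i)\overline{B_i}^{\rm SBB}$.

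The genuine gap is in (ii), precisely at the point you single out as the main obstacle. The paper's key step --- which you miss --- is that there is nothing to refine: the non-klt locus of $(\overline{X}^{\rm SBB},E)$ \emph{equals} $V_\Phi$. Indeed $E=\sum_i\tfrac{d_i-1}{d_i}\overline{B_i}^{\rm SBB}-\tfrac{1}{N}\,{\rm div}(\Phi)$ with $\tfrac{1}{N}{\rm div}(\Phi)=\sum_i c_i\overline{B_i}^{\rm SBB}$ effective and $\Q$-Cartier, so every divisor over $\overline{X}^{\rm SBB}$ whose center lies in ${\rm Supp}({\rm div}\Phi)$ has strictly larger discrepancy for $(\overline{X}^{\rm SBB},E)$ than for the lc pair $(\overline{X}^{\rm SBB},\sum_i\tfrac{d_i-1}{d_i}\overline{B_i}^{\rm SBB})$; by Lemma \ref{lc.center} (i) the surviving non-klt centers are exactly the cusps not contained in $\{\Phi=0\}$, whose union of closures is $V_\Phi$. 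Then \cite[1.2]{HM} gives (ii) at once. Your substitute --- ``absorbing'' cusps into rational chains inside the affine open $\overline{X}^{\rm SBB}\setminus\{\Phi=0\}$ --- cannot work: an affine variety contains no complete rational curves, and the cusps you propose to absorb (those constituting $V_\Phi$) are the ones the statement permits you to keep, not ones you would need to eliminate. Your unitary-case argument is also flawed: $(\overline{X}^{\rm SBB},E)$ cannot be made klt by perturbing $E$, since its lc centers are cusps that need not lie on any $\overline{B_i}^{\rm SBB}$ and the relevant discrepancies come from toroidal exceptional divisors unaffected by scaling $E$; the paper instead uses that all cusps of a ball quotient are $0$-dimensional, so chain connectivity modulo the finite set $V_\Phi$ already gives chain connectivity. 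For the same reason, note that in (iii) the lc centers of $(\overline{X}^{\rm SBB},E)$ to which the vanishing theorem applies are the cusps \emph{not} contained in ${\rm Supp}({\rm div}\Phi)$ (those making up $V_\Phi$), rather than cusps lying outside $V_\Phi$ as you assert when setting up $W$.
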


\begin{proof}
We first consider (i) of the above theorem. 
From the existence of $\Phi$, it follows in the same way that 
$$
-K_{\overline{X}^{\rm SBB}}\sim_{\Q}
(s(X)-1)L+\sum_i 
\biggl(\frac{d_{i}-1}{d_i}-c_i\biggr) 
\overline{B_i}^{\rm SBB}, 
$$
hence it is big. The proofs of the other assertions in (i) are the 
same as those of Theorem \ref{Fanoness}. 
For (ii), note that the non-klt locus of 
$(\overline{X}^{\rm SBB},\sum_i (\frac{d_{i}-1}{d_i}-c_i) \overline{B_i}^{\rm SBB})$ 
is the union of log canonical centers of 
$(\overline{X}^{\rm SBB},
\sum_i \frac{d_{i}-1}{d_i} \overline{B_i}^{\rm SBB})$ 
which are not inside ${\rm Supp}({\rm div}(\Phi))$. 
Hence, the assertion (ii) 
directly follows from 
\cite[1.2]{HM} for 
$(\overline{X}^{\rm SBB},
\sum_i \frac{d_{i}-1}{d_i} \overline{B_i}^{\rm SBB})$. The assertion for the unitary case holds 
since the cusps are all  $0$-dimensional 
(cf., e.g., \cite[section 4]{Behrens}). 
Indeed, it follows since the Levi part of 
real parabolic subgroup 
of $G$ corresponding to the cusps are $U(0,n-1)$, 
which is trivial. 
For (iii), the same 
arguments as Corollary \ref{connected}, similarly applying  
\cite[4.4, 6.6(ii)]{Amb} or \cite[8.1]{Fjn} 
to the log canonical Fano pair 
$(\overline{X}^{\rm SBB},
\sum_i (\frac{d_{i}-1}{d_i}-c_i) \overline{B_i}^{\rm 
SBB})$, give a proof. 
\end{proof}

\begin{Rem}
We can also show a variant of Corollary  \ref{connected}, Theorem \ref{-K.big} (iii)  under 
general 
{\it meromorphic} modular forms 
if we replace the use of 
\cite[6.6(ii)]{Amb} by 
\cite[4.4]{Amb} or \cite[6.1.2]{Fjn.MSJ}. 
However, because the obtained statement is rather complicated 
and no interesting applications have been found (yet at least), 
we omit it in this paper. 
\end{Rem}

%%%%%%%%%%%%%%%%%%%%%%%%%%%%%%%%%%%%%

\bigskip
We conclude this section by posing a natural problem. 
\begin{Prob}\label{novanishing}
In specific situations,  
e.g., 
when $\mathbb{G}=SO(\Lambda\otimes \Q)$ for a quadratic 
lattice $\Lambda$, or in the unitary 
modular case corresponding to 
a Hermitian lattice as later 
subsection \S 
\ref{prepare.Hlattice}, 
the assertions of 
Corollaries \ref{connected}, 
\ref{Qrk1}, Theorem \ref{-K.big} (iii) 
can be phrased 
in a purely lattice theoretic 
manner. Is there a more lattice theoretic or number theoretic 
proof without the use of a  
vanishing theorem in algebraic geometry? 
\end{Prob}

\vspace{3mm}
%%%%%%%%%%%%%%%%%%%%%%%%%%%%%%%%%%%

\section{Examples of Fano and K-ample cases}\label{Ex.sec} 

We provide examples of which 
Theorems \ref{Fanoness}, Corollary \ref{connected}, Corollary 
\ref{Qrk1},  Theorem \ref{converse.Fanoness} 
in \S \ref{Fano.subsec} apply. 
In the examples, 
the compactified modular varieties are either Fano varieties 
or with ample canonical classes. 
There are also some examples with 
$s(X)=1$, for instance \cite{FSM} 
(cf., also earlier \cite{BN} 
with a weaker statement) 
but we do not focus such cases 
in this paper. 

\subsection{Siegel modular cases}

We start by discussing 
the Satake-Baily-Borel compactifications of 
some semi-classical 
modular varieties, which we show to fit our picture. 
The examples in this subsection and 
the next subsection \ref{ortho.PartI} 
do not use explicit modular forms but they are 
Fano varieties so that the converse 
theorem \ref{converse.Fanoness} applies to imply the (abstract) 
existence of 
special reflective modular forms. 

The examples with explicit special reflective modular forms, to which we can apply Theorem \ref{Fanoness} 
will be discussed from 
the next section \S \ref{ortho.PartII}. 
Here are two examples of Siegel modular varieties whose Satake-Baily-Borel compactifications are Fano varieties. 

\begin{Ex}[\cite{Igusa}]
The Satake-Baily-Borel compactification of 
the moduli of principally polarized 
abelian surfaces 
$\overline{A_2}^{\rm SBB}$ is known to be a 
weighted projective hypersurface in 
$\mathbb{P}(4,6,10,12,35)$ 
of degree $70$ 
with the coarse moduli 
isomorphic to 
$\mathbb{P}(2,3,5,6)$ 
by relating to the 
invariants of genus $2$ curves, hence 
binary sextics. 
Note that the 
adjunction does not work 
due to non-well-formedness, 
as indeed one has 
non-trivial isotropy 
($\mu_2$) 
along a divisor in 
the moduli stack. 
The reduction of the natural 
Faltings-Chai model 
over $\mathbb{F}_{p}$ 
are also determined 
(cf., \cite{Ichikawa, vdG2}). 
\end{Ex}

\begin{Ex}[{cf., \cite[5.2]{vdG} (also \cite{Igusa})}]
The Satake-Baily-Borel compcatification of 
the moduli of principally polarized 
abelian surfaces with level $2$ structure 
$\overline{\Gamma(2)\backslash \mathfrak{H}}^{\rm SBB}$ is known to be a quartic 
$3$-fold 
\begin{equation}
\sum_{i=0}^{5}x_i
=(\sum_{i=0}^{5}x_i^2)^2
-4(\sum_{i=0}^{5}x_i^4)=0, 
\end{equation}
with 
non-isolated singularities 
along $15$ lines. Since this is a hypersurface, 
it is clearly Gorenstein and has ample anticanonical 
class. It also follows from \cite[\S 3, \S 4]{Mum77} (cf., also  \cite[3.5]{Alexeev}) again that it is at least log canonical. 
\end{Ex}

\subsection{Orthogonal modular cases, Part I}
\label{ortho.PartI}

Below, we consider the cases where $\mathbb{G}=SO(\Lambda\otimes \mathbb{Q})$  for a quadratic lattice $(\Lambda,(\ ,\ ))$ 
of signature $(2,n)$ with $n\in \mathbb{Z}_{>0}$. 
We realize the Hermitian symmetric domain 
$X=G/K$ as 
$G/K\simeq \D_\Lambda$ which is defined as one of (the isomorphic two) 
connected components of 
\[\{v\in\mathbb{P}(\Lambda\otimes\C)\mid (v,v)=0,\  (v,\overline{v})>0\}.\] 

We keep this notation throughout 
in the discussion of orthogonal 
modular varieties. 
Our first two examples in this Part I are understood via  moduli-theoretic methods and GIT as follows. 

\begin{Ex}[Hilbert]
The GIT compactification of the moduli of cubic surfaces (\cite[\S 4.2]{OSS}) 
is known to be isomorphic to the 
Satake-Baily-Borel compactification 
of the stable locus which admits uniformization of complex ball 
(cf., \cite{ACT}). 
Hilbert's invariant calculation 
in his thesis tells this is 
$\mathbb{P}(1,2,3,4,5)$, hence the only cusp is not naked 
because of the log terminality. 
Obviously, it is also a 
($\Q$-)Fano variety. 
This is also one of the simplest 
examples of the K-moduli variety 
of Fano varieties (\cite[\S 4.2]{OSS}). 
\end{Ex}

Given \cite{Ma.gentype}, 
it is reasonable 
to ask the following 
problem in general. 
\begin{Prob}
Classify the lattices 
$\Lambda$ of signature $(2,n)$
such that 
the Satake-Baily-Borel 
compactification 
$\Gamma \backslash 
\mathcal{D}_{\Lambda}$ 
are Fano varieties, 
especially when 
$\Gamma=O^+(\Lambda)$ or 
$\widetilde{O}^+(\Lambda)$. 
\end{Prob}

From what follows, our arithmetic subgroup 
satisfies $\Gamma$ is either 
$O^+(\Lambda)$ or the stable 
orthogonal group $\widetilde{O}^+(\Lambda)$. 

\vspace{3mm}
\begin{Ex}[Moduli of elliptic K3 surfaces]
We consider the moduli $M_{W}$ of Weierstrass elliptic K3 surfaces, 
which is an open subset of 
$O^{+}(\Lambda)\backslash \mathcal{D}_{\Lambda}$
for $\Lambda:=U^{\oplus 2}\oplus E_{8}(-1)^{\oplus 2}$. 
We consider its 
Satake-Baily-Borel compactification (\cite[Theorem 7.9]
{OO18}), 
which we denote $\overline{M_{W}}^{\rm SBB}$ here. 
Recall from {\it loc.cit} \S 7.1 
that there are exactly two $1$-cusps 
intersecting at the only $0$-cusp. 
Two $1$-cusps are 
$M_{W}^{\rm nn}$ with canonical Gorenstein singularity and $M_{W}^{\rm seg}$ with toroidal singularity (including the 
$0$-cusp 
$\overline{M_{W}^{\rm nn}}\cap \overline{M_{W}^{\rm seg}}$)  hence 
$\overline{M_{W}}^{\rm SBB}$  also only has log terminal singularity (\cite[Part I, \S 2]{Od20}). 
The notation of our superscripts ``nn" and ``seg" 
follow that of \cite[Chapter 7]{OO18} where some 
collapsing of hyperK\"ahler metrics to 
{\it segment} i.e., $[0,1]$ 
is partially observed along $M_W^{\rm seg}$, 
and also that {\it non-normal} degenerations are parametrized 
by $M_W^{\rm nn}$. 

We recall that 
$\overline{M_W}^{\rm SBB}$ coincides with a 
certain  GIT quotient of a weighted projective space 
(\cite[Theorem 7.9]{OO18}). Using the fact as well as 
some analysis of singularities along the 
$1$-cusps in \cite[Part I]{Od20}, we prove the 
following. 

\begin{Thm}\label{ellK3.moduli}
$\overline{M_W}^{\rm SBB}$ is a $18$-dimensional 
log terminal rational Fano variety of 
Picard rank $1$, 
although not isomorphic to any 
weighted projective space. 
Its two $1$-cusps 
$M_W^{\rm seg}$ and $M_W^{\rm nn}$ 
are both non-naked. 
\end{Thm}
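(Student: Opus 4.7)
The plan is to combine the GIT realization of $\overline{M_W}^{\rm SBB}$ from \cite[Theorem 7.9]{OO18} with the singularity analysis of \cite[Part I]{Od20} to extract the numerical properties directly, and then apply Corollary \ref{connected} for the non-nakedness statement.

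The dimension $18$ is immediate from the signature $(2,18)$ of $\Lambda$. Using the GIT description of $\overline{M_W}^{\rm SBB}$ as a reductive quotient of a weighted projective space $\mathbb{P}$, rationality follows from classical results on moduli of binary forms modulo $\mathrm{PGL}_2$ (of Katsylo type); Picard rank $1$ follows since the unique ample generator of $\mathrm{Pic}(\mathbb{P})_{\mathbb{Q}}$ descends and the acting group has trivial character group; and the Fano property is obtained by computing $-K_{\mathbb{P}}$ explicitly and tracking it through GIT adjunction, with the ramification contributions along the $\mathrm{PGL}_2$-unstable locus absorbed into the boundary structure. Log terminality is exactly the content of \cite[Part I, \S 2]{Od20}, where the singularities along $M_W^{\rm nn}$ are shown to be canonical Gorenstein and those along $M_W^{\rm seg}$ toroidal.

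With these properties in hand, the non-nakedness of both $1$-cusps is essentially automatic. The relation $-K_{\overline{X}^{\rm SBB}} \sim_{\Q} (s(X)-1) L$ derived in Theorem \ref{Fanoness} forces $s(X) > 1$ once Fano is established and a special reflective modular form is produced abstractly via Theorem \ref{converse.Fanoness} (whose hypotheses are met by Picard rank $1$ together with Fano). We are then in the setting of Corollary \ref{connected}, which identifies the closure of the naked locus with the non-log-terminal locus; since $\overline{M_W}^{\rm SBB}$ is log terminal, the naked locus is empty, and in particular neither $M_W^{\rm seg}$ nor $M_W^{\rm nn}$ is naked.

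The main obstacle is the assertion that $\overline{M_W}^{\rm SBB}$ is not isomorphic to any weighted projective space. The plan here is to exploit the refined singularity data along the $1$-cusps: every codimension-at-most-$17$ point of an $18$-dimensional weighted projective space has a cyclic quotient (hence toric) singularity, whereas the general point of $M_W^{\rm nn}$ carries a non-toric canonical Gorenstein singularity whose local analytic type, or local class group, does not match any cyclic quotient. Combined with the rigid boundary stratification by exactly two $1$-cusps meeting at a single $0$-cusp (versus the $19$ torus-fixed points and their orbit-closure lattice in any $18$-dimensional weighted projective space), this should yield the required obstruction. Extracting a clean intrinsic invariant that distinguishes $\overline{M_W}^{\rm SBB}$ from every weighted projective space of dimension $18$ is the step I expect to require the most care.
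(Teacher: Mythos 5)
Your overall strategy coincides with the paper's: dimension from the signature, Picard rank $1$ and rationality from the GIT description, non-nakedness of both $1$-cusps from log terminality via Corollary \ref{connected} (after producing the special reflective modular form abstractly through Theorem \ref{converse.Fanoness}), and the non-weighted-projective-space claim from the singularity type along $M_W^{\rm nn}$. The last three of these are essentially exactly the paper's argument. However, two steps have genuine gaps as written.

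First, the Fano property. ``Computing $-K_{\mathbb{P}}$ explicitly and tracking it through GIT adjunction, with the ramification contributions along the $\mathrm{PGL}_2$-unstable locus absorbed into the boundary structure'' is not a proof: the unstable locus is removed before quotienting, so it contributes no ramification, and the actual corrections occur along semistable loci with nontrivial finite stabilizers, where the relation between $K_{\mathbb{P}}$ and $K$ of the quotient is exactly the delicate point. The paper avoids this entirely by citing \cite[Corollary 3]{BGLM}: a reductive quotient of a variety of Fano type is of Fano type, which yields only that $-K_{\overline{M_W}^{\rm SBB}}-D$ is ample for some effective $\Q$-divisor $D$, i.e.\ that $-K$ is \emph{big}; it is then the Picard rank $1$ statement that upgrades bigness to ampleness. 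You use Picard rank $1$ only for the descent of the generator, not for this crucial upgrade, and without it (or a complete direct computation) the Fano claim is unproved. Second, the non-isomorphism with a weighted projective space: your idea (cyclic quotient singularities everywhere on a weighted projective space versus the singularity transverse to $M_W^{\rm nn}$) is the right one, but you explicitly leave open the identification of a distinguishing invariant, and ``canonical Gorenstein'' alone does not suffice since cyclic quotients can be canonical Gorenstein. The paper closes this by quoting \cite[Part I, Theorem 2.2]{Od20}: the local fundamental group of the transversal slice along $M_W^{\rm nn}$ is $(\Z/2\Z)^{4}$, hence non-cyclic, which is incompatible with any cyclic quotient singularity. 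Your auxiliary argument comparing the two-cusp stratification with torus-fixed points of a weighted projective space is not an obstruction, since an abstract isomorphism need not respect the cusp stratification. Finally, for rationality the paper cites \cite{Lej} (via moduli of genus $5$ hyperelliptic curves) rather than a Katsylo-type result, though this is a matter of reference rather than of substance.
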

\begin{proof}
The description of $\overline{M_W}^{\rm SBB}$ 
as a GIT quotient 
\cite[Theorem 7.9]{OO18} allows us 
to apply \cite[Corollary 3]{BGLM} to confirm 
there is an effective $\Q$-divisor $D$ on 
$\overline{M_W}^{\rm SBB}$ such that 
$-K_{\overline{M_W}^{\rm SBB}}-D$ is ample. 
Therefore, $-K_{\overline{M_W}^{\rm SBB}}$ is big. 
On the other hand, $\overline{M_{W}}^{\rm SBB}$ 
has Picard rank $1$ because of the same 
GIT quotient description. 
Hence, the bigness of 
$-K_{\overline{M_W}^{\rm SBB}}$ implies it is actually 
even ample i.e., $\overline{M_W}^{\rm SBB}$ is a 
Fano variety. 

The fact that both $1$-cusps are 
 non-naked are follows from Corollary \ref{connected}, 
 because $\overline{M_W}^{\rm SBB}$ is 
 log terminal as proven in \cite[Part I, \S 2]{Od20}. (The log terminality also 
 follows from \cite[Theorem1]{BGLM} combined again 
 with the fact that 
 $\overline{M_{W}}^{\rm SBB}$ has Picard rank $1$.) 
 As for the rationality of  $M_{W}$, 
 \cite{Lej} proved it, 
 based on more classical 
 rationality result of the moduli space of 
 hyperelliptic curves (of genus $5$). 
 
 The only remained thing to prove in the above theorem is 
 that  $\overline{M_{W}}^{\rm SBB}$ is not a weighted projective space. 
 From the analysis of 
singularity type along 
$1$-cusp $M_W^{\rm nn}$ in \cite[Part I, Theorem 2.2]{Od20}, it easily follows that the 
local fundamental group 
along the transversal slice 
is $(\mathbb{Z}/2\mathbb{Z})^{4}$ 
hence not cyclic. In 
particular, 
$\overline{M_W}^{\rm SBB}$ 
can not be a weighted 
projective space. We complete the proof of 
Theorem \ref{ellK3.moduli}. 
\end{proof}

As a corollary, we also observe 
the following. 

\begin{Cor}
On the orthogonal modular variety 
$\overline{M_{W}}^{\rm SBB}$, 
there are special reflective modular forms which satisfy    Assumption \ref{Ass2} \eqref{item1.1} (of \S \ref{special.form.section}) 
for some $s(X)>1$ 
and sufficiently divisible $N\in \mathbb{Z}_{>0}$. 
\end{Cor}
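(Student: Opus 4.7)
The plan is to invoke the abstract existence result Theorem \ref{converse.Fanoness} directly, using Theorem \ref{ellK3.moduli} to verify its hypotheses. Indeed, Theorem \ref{ellK3.moduli} establishes that $\overline{M_W}^{\rm SBB}$ is a Fano variety of Picard rank $1$, which is precisely the second bullet of Theorem \ref{converse.Fanoness} (with $-K$ ample). Applying that theorem yields a reflective modular form $f$ satisfying Assumption \ref{Ass1} \eqref{item1.1} for some $s(X)\in\mathbb{Q}_{>0}$ and sufficiently divisible $N\in\mathbb{Z}_{>0}$.

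Next, I would upgrade $s(X)\in \mathbb{Q}_{>0}$ to $s(X)>1$ by recalling the identity
\[
-K_{\overline{X}^{\rm SBB}}\sim_{\mathbb{Q}}(s(X)-1)L
\]
established as \eqref{aut.K} in the proof of Theorem \ref{Fanoness}. Since $L$ is ample on $\overline{M_W}^{\rm SBB}$ and, by Theorem \ref{ellK3.moduli}, $-K_{\overline{M_W}^{\rm SBB}}$ is also ample, the Picard rank $1$ condition forces the proportionality constant $s(X)-1$ to be strictly positive, i.e., $s(X)>1$.

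Finally, I would specialize Assumption \ref{Ass1} \eqref{item1.1} to Assumption \ref{Ass2} \eqref{item2.1}. Here $\mathbb{G}=SO(\Lambda\otimes\mathbb{Q})$ with $\Lambda=U^{\oplus 2}\oplus E_8(-1)^{\oplus 2}$ of signature $(2,18)$, and the relevant arithmetic group $\Gamma=O^+(\Lambda)$ sits in $O(\Lambda)$, so the setup of Assumption \ref{Ass2} is in force. By \cite[2.12, 2.13]{GHS} (quoted right after Assumption \ref{Ass2}), every branch divisor in this orthogonal setting comes from a reflection of order $2$, so all ramification degrees satisfy $d_i=2$ and $\frac{d_i-1}{d_i}=\frac{1}{2}$. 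Consequently the condition $\frac{N}{d_i}\in\mathbb{Z}_{>0}$ from Assumption \ref{Ass1} \eqref{item1.1} reduces to $\frac{N}{2}\in\mathbb{Z}_{>0}$, and the divisor in Assumption \ref{Ass1} \eqref{item1.1} becomes exactly the one in Assumption \ref{Ass2} \eqref{item2.1}. Thus the form $f$ produced above automatically verifies the stronger assumption.

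There is essentially no hard obstacle here: the corollary is a packaging of Theorem \ref{ellK3.moduli} and Theorem \ref{converse.Fanoness} with a routine specialization of exponents. As a bonus observation, since $\Lambda=U\oplus U(1)\oplus E_8(-1)^{\oplus 2}$ fits the shape $U\oplus U(l)\oplus N$ with $l=1$ and $N=E_8(-1)^{\oplus 2}$ negative definite, the final clause of Theorem \ref{converse.Fanoness} applies and the special reflective modular form we produce can moreover be taken to be a Borcherds lift of a nearly holomorphic $Mp_2(\mathbb{Z})$-modular form with prescribed principal part; this point can be recorded as a remark after the proof.
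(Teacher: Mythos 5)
Your proposal is correct and follows exactly the paper's own route: the paper's proof is precisely the one-line invocation of Theorem \ref{converse.Fanoness} via Theorem \ref{ellK3.moduli}, and your additional details (deducing $s(X)>1$ from \eqref{aut.K} together with ampleness of $-K$ and $L$, and reducing Assumption \ref{Ass1} \eqref{item1.1} to Assumption \ref{Ass2} \eqref{item2.1} since all $d_i=2$ by \cite{GHS}) correctly fill in what the paper leaves implicit. The only trivial quibble is that the positivity of $s(X)-1$ already follows from ampleness of both sides of \eqref{aut.K} without appealing to Picard rank $1$.
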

\begin{proof}
By the above theorem \ref{ellK3.moduli}, 
we can apply Theorem \ref{converse.Fanoness} 
to complete the proof. 
\end{proof}

\end{Ex}

%%%%%%%%%%%%%%%%%%%%%%%

\subsection{Orthogonal modular cases, Part II}
\label{ortho.PartII}

From here, we use the Borcherds products  
to show that various Satake-Baily-Borel 
compactifications of orthogonal modular 
varieties are Fano varieties or 
log canonical models. 

\subsubsection*{Notation}
Let
\[\mathcal{H}(\ell):=\{v\in \D_\Lambda\mid (v,\ell)=0\}\]
be the special divisor with respect to $\ell\in\Lambda$ with $(\ell,\ell)<0$.
For any primitive element $r\in \Lambda$ satisfying $(r,r)<0$, we define the {\em reflection}  $\sigma_r\in O^+(\Lambda)(\Q)$ with respect to $r$ as follows:
\[\sigma_r(\ell):=\ell -\frac{2( \ell,r)}{(r,r)}r.\]

Then, the union of ramification divisors of $\pi_{\Gamma}\colon \D_\Lambda\to\Gamma\backslash \D_\Lambda$  is 
  \[\bigcup_{\substack{r\in \Lambda/\pm\mathrm{:primitive}\\ \sigma_r\in\Gamma \ \mathrm{or}\  -\sigma_r\in\Gamma}}\mathcal{H}(r)\]
  by \cite{GHS} for $\Gamma\subset 
O^+(\Lambda)$ and $n>2$. 
They also showed that the ramification degrees are $2$.
We sometimes denote 
  $\pi_{\Gamma}$ as $\pi$. 
We also define
 \begin{align*}
    \mathcal{H}_{-2}&:=\bigcup_{\ell\in \Lambda,\ \ell^2=-2}\mathcal{H}(\ell)\\
    \mathcal{H}_{-4}&:=\bigcup_{\ell\in \Lambda,\ \ell^2=-4}\mathcal{H}(\ell)\\
    \mathcal{H}_{-4,special\mathchar`-even}&:=\bigcup_{\ell\in \Lambda:special\mathchar`-even,\ \ell^2=-4}\mathcal{H}(\ell).
\end{align*}
Here we say a vector $r\in\Lambda$ is special-even (also called even type e.g., 
in \cite{Kondo02}) if $(\ell.r)$ is even for any $\ell\in\Lambda$,  
i.e., ${\rm div}(r)$ is even integer, 
so that the corresponding 
reflection lies in $\Gamma$. 
We define ${\rm div}(r)$ is the positive generator of the ideal 
\[\{(\ell,r)\mid\ell\in\Lambda\}.\]

\begin{Rem}
\label{rem:orthog_a}
Below, for orthogonal cases, if $f$ is a modular form corresponding to a section satisfying Assumption \ref{Ass2} (i), we can compute $s(X)=\frac{k}{2mn}$. 
Here, $k$ is the weight of $f$ and $m$ is the multiplicity of ${\rm div} f$, and $n={\rm dim}X$.
\end{Rem}

\vspace{3mm}

\begin{Ex}\label{Unimodular case I}

Let $II_{2,26}=U\oplus  U\oplus E_8(-1)\oplus E_8(-1)\oplus E_8(-1)$ be an even unimodular lattice of signature $(2,26)$. 
We consider the case 
$\Gamma=O^+({\Lambda})$. 
There is the modular form  $\Phi_{12}$ of weight 12 on  $\D_{II_{2,26}}$ by Borcherds 
\cite{infinite} with 
\begin{align}\label{Phi12.restrict}
{\rm div}{\Phi_{12}}=\mathcal{H}_{-2}.
\end{align}
On the other hand, the ramification divisors of the map $\pi\colon II_{2,26}\to X:=O^+(II_{2,26})\backslash \D_{II_{2,26}}$ are  
$\mathcal{H}_{-2}$ by the even unimodularity of $\Lambda$ and \cite{GHS}. 

Now $\Phi_{12}^{2\times 26}$ 
satisfies Assumption 1.2 (i) 
with $s(X)=\frac{3}{13}$ 
and by Theorem \ref{Fanoness} (iii) 
so that  
the Satake-Baily-Borel compactification 
$\overline{X}^{\mathrm{SBB}}$ 
of the $26$-dimensional 
orthogonal modular variety 
$X=O^+(II_{2,26})\backslash 
\mathcal{D}_{II_{2,26}}$ 
is a log canonical model i.e., 
with ample canonical divisor 
$K_{\overline{X}^{\rm SBB}}$ 
and at worst log canonical 
singularities. Let us specify 
and study the 
non-log-terminal locus or the log canonical center. 

First, recall that there are exactly $24$ $1$-cusps, which 
correspond to Niemeier lattices 
and all intersect 
at a common closed point 
(cf., e.g., \cite[1.1]{GB}). 
In particular, there is a $1$-cusp 
which is 
the compactification of 
the modular curve 
$SL(2,\Z)\backslash \mathbb{H}$ 
corresponding to 
the Leech lattice. We denote the particular $1$-cusp as 
$C_{\rm Leech}$. 

For the Harish-Chandra-Borel embedding 
$$\mathcal{D}_{II_{2,26}}\subset 
\mathcal{D}_{II_{2,26}}^{c}
\subset 
\mathbb{P}(II_{2,26}\otimes \mathbb{C}),$$ 
$\mathcal{O}_{\mathbb{P}(II_{2,26}\otimes \mathbb{C})}(1)$ 
restricts to 
$\mathcal{O}_{\mathbb{P}^1}(1)|_{\mathbb{H}}$ for any 
$1$-cusp $\mathbb{H}\subset \mathbb{P}^1$. 
For instance, by \cite[\S 10]{infinite}, \cite[1.2]{GB}, 
$\Phi_{12}$ restricts 
to the Ramanujan cusp form 
$\Delta_{12}(q):=q\prod_{n\ge 1} (1-q^n)^{24}$ 
of weight $12$ on $C_{\rm Leech}$. 
Since the only modular branch divisor 
is $\mathcal{H}_{-2}$, together with \eqref{Phi12.restrict} and 
Lemma \ref{lc.center}, it implies that 
the only log canonical center is the $C_{\rm Leech}$. 
Recall that through the well-known isomorphism 
$SL(2,\Z)\backslash \mathbb{H} 
\simeq \mathbb{A}^1(\mathbb{C})
\subset \mathbb{P}^1(\mathbb{C})$, 
the elliptic modular forms of 
weight $12 k$ can be regarded with a section of 
$\mathcal{O}_{\mathbb{P}^1}(k)$, 
at the level of coarse moduli. 
In other words, $\mathcal{O}_{\mathbb{P}^1}(12 k)|_
{\mathbb{H}}$ descends to a line bundle 
$\mathcal{O}_{\mathbb{P}^1}(k)$ 
on $\mathbb{P}^1\simeq SL(2,\Z)\backslash \overline{\mathbb{H}}$ where 
$\overline{\mathbb{H}}$ denotes the rational closure of 
$\mathbb{H}$. 

In particular, 
$(2s(X)L.C_{\rm Leech})=1$, where $L$ follows the notation of 
\S \ref{Notation}. Equivalently 
$(K_{\overline{X}^{\rm SBB}}.C_{\rm Leech})=\frac{5}{3},$ 
$(\overline{B}.C_{\rm Leech})=1$ as $s(X)=\frac{3}{13}$. 
We summarize our conclusion in  
this case neatly as 
$II_{2,26}$ attracts special attention. 

\begin{Cor}[$II_{2,26}$ case]
The Satake-Baily-Borel compactification 
$\overline{X}^{\mathrm{SBB}}$ 
of the $26$-dimensional 
orthogonal modular variety 
$X=O^+(II_{2,26})\backslash 
\mathcal{D}_{II_{2,26}}$ 
is a log canonical model i.e., 
with ample canonical divisor 
$K_{\overline{X}^{\rm SBB}}$ 
and at worst log canonical 
singularities. Further, 
the non-log-terminal locus is the 
single $C_{\rm Leech}\simeq \mathbb{P}^1$ 
in the boundary $\partial X^{\rm SBB}$ 
which compactifies 
$1$-cusp 
$SL(2,\Z)\backslash \mathbb{H}$ 
and is characterized by that 
the corresponding isotropic plane 
$p\subset II_{2,26}\otimes \R$ 
satisfies that 
$(p^{\perp}\cap II_{2,26})/(p\cap II_{2,26})$ is the 
Leech lattice i.e., 
contains no roots. 
Its degree is 
$(K_{\overline{X}^{\rm SBB}}.C_{\rm Leech})=\frac{5}{3}.$ 
(resp., $(\overline{B}.C_{\rm Leech})=1$). 
\end{Cor}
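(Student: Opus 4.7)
The plan is to assemble the corollary by invoking Theorem \ref{Fanoness} (iii) for a suitable integral power of Borcherds's $\Phi_{12}$, then using Lemma \ref{lc.center} to locate the non-log-terminal locus, and finally computing the two intersection numbers by restricting $L$ and $\Phi_{12}$ to the distinguished modular curve.

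First I would verify Assumption \ref{Ass2} (i) for $\Phi_{12}$. Because $II_{2,26}$ is even unimodular, the result of Gritsenko--Hulek--Sankaran identifies the branch divisor of $\pi$ with $\mathcal{H}_{-2}$ and gives ramification order $2$; combined with \eqref{Phi12.restrict}, this matches $\mathrm{div}(\Phi_{12})$ with the branch locus. Applying Remark \ref{rem:orthog_a} with $k=12$, $m=1$, $n=26$ gives $s(X)=12/(2\cdot 1\cdot 26)=3/13<1$, so Theorem \ref{Fanoness} (iii) immediately yields that $\overline{X}^{\rm SBB}$ is a log canonical model.

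Next I would identify the non-log-terminal locus. By Lemma \ref{lc.center} (ii), the log canonical centers of $\overline{X}^{\rm SBB}$ are exactly the naked cusps in the sense of Definition \ref{naked}. Using the standard dictionary between $1$-cusps and Niemeier lattices via $(p^\perp\cap II_{2,26})/(p\cap II_{2,26})$, a $1$-cusp lies in $\overline{\mathcal{H}_{-2}}^{\rm SBB}$ if and only if the associated Niemeier lattice contains roots. Since Leech is the unique root-free Niemeier lattice, $C_{\rm Leech}$ is the only naked $1$-cusp; moreover all $24$ $1$-cusps meet at the unique $0$-cusp, which hence lies in the closure of any non-Leech $1$-cusp and therefore in $\overline{\mathcal{H}_{-2}}^{\rm SBB}$, so the $0$-cusp itself is non-naked. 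Consequently the closure of $C_{\rm Leech}$, which is isomorphic to $\mathbb{P}^1$, is the unique minimal naked cusp and coincides with the non-log-terminal locus.

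Finally I would compute the two degrees by restriction to $C_{\rm Leech}$. By \cite[\S 10]{infinite}, $\Phi_{12}|_{C_{\rm Leech}}$ is the Ramanujan $\Delta_{12}$, a weight-$12$ elliptic form on $SL(2,\Z)$; since weight-$12k$ elliptic forms descend to sections of $\mathcal{O}_{\mathbb{P}^1}(k)$ on $\overline{SL(2,\Z)\backslash\mathbb{H}}\simeq\mathbb{P}^1$, and $L$ has canonical weight $c=26$ by Lemma \ref{can.wt}, one obtains $L.C_{\rm Leech}=26/12=13/6$. The equivalences \eqref{br.L} and \eqref{aut.K} then give $(\overline{B}.C_{\rm Leech})=2s(X)(L.C_{\rm Leech})=1$ and $(K_{\overline{X}^{\rm SBB}}.C_{\rm Leech})=(1-s(X))(L.C_{\rm Leech})=5/3$. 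The main obstacle is the input from Niemeier lattice theory needed to decide which $1$-cusps meet the branch support; once that and Lemma \ref{lc.center} are in hand, everything else is a formal consequence of the linear equivalences derived during the proof of Theorem \ref{Fanoness}.
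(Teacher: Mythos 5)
Your proposal is correct and follows essentially the same route as the paper: apply Theorem \ref{Fanoness}~(iii) to a power of $\Phi_{12}$ with $s(X)=3/13$, use Lemma \ref{lc.center} together with the Niemeier-lattice description of the $24$ $1$-cusps (equivalently, the fact that $\Phi_{12}$ restricts to the nonzero form $\Delta_{12}$ exactly on the Leech cusp) to single out $C_{\rm Leech}$ as the unique log canonical center, and compute the degrees from the weight dictionary $L|_{C_{\rm Leech}}$ of degree $26/12=13/6$ combined with \eqref{br.L} and \eqref{aut.K}. The only cosmetic point is that Lemma \ref{lc.center}~(ii) is stated under the hypothesis $s(X)>1$, but its proof only uses the $\Q$-Cartierness of $\sum_i\frac{d_i-1}{d_i}\overline{B_i}^{\rm SBB}$, so invoking it here with $s(X)<1$ is legitimate and is exactly what the paper itself does.
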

Later in Example \ref{Unimodular case I_unitary}, we also construct a 
$13$-dimensional 
unitary modular subvariety 
which also compactifies with 
ample canonical class as the 
Satake-Baily-Borel compactification. 
\end{Ex}

\vspace{3mm}
\begin{Ex}
\label{Unimodular case II}
Let $\Lambda:=U\oplus U\oplus E_8(-1)$ be an even unimodular lattice 
of signature $(2,10)$. 
We again consider the case 
$\Gamma=O^+({\Lambda})$. 
Borcherds constructed a reflective modular form on $\D_{\Lambda}$.
\begin{Thm}[{\cite[10.1, 16.1]{infinite}}]
There is a reflective modular form $\Phi_{252}$ of weight 252 on 
$\D_{\Lambda}$ such that 
\[{\rm div}{\Phi_{252}}=\mathcal{H}_{-2}.\]
\end{Thm}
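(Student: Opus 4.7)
The plan is to realize $\Phi_{252}$ as the image of an explicit weakly holomorphic elliptic modular form under Borcherds' singular theta correspondence, exactly as in the abstract construction referenced in the final assertion of Theorem \ref{converse.Fanoness}. Since $\Lambda=U\oplus U\oplus E_8(-1)$ is even unimodular of signature $(2,10)$, the Borcherds lift $\Psi$ takes as input a scalar-valued weakly holomorphic modular form of weight $1-n/2=-4$ on $\mathrm{SL}_2(\Z)$ and outputs a meromorphic modular form on $\mathcal{D}_\Lambda$ whose weight and divisor are encoded by the principal part of the input.

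First, I would look in $M^{!}_{-4}(\mathrm{SL}_2(\Z))$ for an $f$ whose principal part only sees $(-2)$-vectors. The natural candidate is
\[
f(\tau)\ :=\ \frac{E_8(\tau)}{\Delta(\tau)},
\]
of weight $8-12=-4$. Using $E_8=1+480\sum_{n\ge 1}\sigma_7(n)q^n$ and $\Delta^{-1}=q^{-1}+24+324q+\cdots$, one directly computes
\[
f(\tau)=q^{-1}+504+O(q),
\]
so the only nonzero principal-part coefficient is $c(-1)=1$ and the constant term is $c(0)=504$.

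Next, applying Borcherds' theorem produces a meromorphic automorphic form $\Psi(f)$ on $\mathcal{D}_\Lambda$ of weight $c(0)/2=252$, whose divisor is
\[
\sum_{m>0} c(-m)\!\sum_{\substack{\ell\in\Lambda\\ \ell^{2}=-2m}}\!\mathcal{H}(\ell).
\]
Only $m=1$ contributes, yielding $\mathcal{H}_{-2}$ with multiplicity one. Since this divisor is effective, $\Phi_{252}:=\Psi(f)$ is in fact holomorphic of weight $252$ with $\mathrm{div}(\Phi_{252})=\mathcal{H}_{-2}$, as required.

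The substantive difficulty lies in Borcherds' construction of $\Psi$ itself, i.e., converting the regularized theta integral of $f$ into a convergent infinite product with consistent Weyl vectors at each $0$-cusp and computing its weight and divisor. Once that general machinery is granted, everything reduces to the elementary Fourier-coefficient computation for $E_8/\Delta$ above; no further input about the specific geometry of $\mathcal{D}_\Lambda$ or the $E_8$ lattice is needed.
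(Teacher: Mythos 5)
Your proposal is correct and is essentially the same argument as the paper's source: the paper gives no proof but cites Borcherds [Bor95, 10.1, 16.1], and that reference constructs $\Phi_{252}$ precisely as the Borcherds lift of $E_4^2/\Delta=E_8/\Delta=q^{-1}+504+O(q)\in M^{!}_{-4}(\mathrm{SL}_2(\Z))$, with weight $c(0)/2=252$ and divisor $\mathcal{H}_{-2}$ read off from the principal part exactly as you compute. Your Fourier-coefficient bookkeeping and the holomorphy-from-effectivity step are both correct.
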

Here, by the map $\pi\colon  \D_{\Lambda}\to X:= O^+({\Lambda})\backslash\D_{\Lambda}$,  
the divisors $\mathcal{H}_{-2}$ maps to the unique 
branch divisors 
(cf., \cite[\S 2]{GHS}). 
Hence $\Phi_{252}^{10t}$ satisfies Assumption \ref{Ass2} (i) 
with $s(X)=\frac{63}{5}$ for some $t\in\Z$, 
and by Theorem \ref{Fanoness} (i), the compactified Shimura variety  $\overline{X}^{\mathrm{SBB}}$ is 
a Fano variety. 
Actually, 
\cite[1.1]{HashimotoUeda}, \cite[4.1]{DKW} (also 
attributed to H.Shiga and \cite{Looijenga}) 
shows it is the 
weighted projective space 
$\mathbb{P}(2,5,6,8,9,11,12,14,15,18,21)$. 

\end{Ex}

\vspace{3mm}
\begin{Ex}[Moduli of Enriques surfaces]
\label{Enriques}
The well-studied moduli space $M_{Enr}$ of (unpolarized) Enriques 
surfaces (cf., e.g., 
\cite{Namikawa, Sterk, Borcherds.Enriques, 
Kondo.Enriques}) 
also fit into our setting. 
Let $\Lambda_{Enr}:=  U \oplus U (2)\oplus  E_8(-2)$ be an even  lattice 
of signature $(2,10)$. 
Then the Shimura variety
\[M_{Enr}:=  O^+(\Lambda_{Enr})\backslash \D_{\Lambda_{Enr}}\]
is a 10-dimensional quasi-projective variety.
Now we review the ramification divisors of the natural map $\pi : \D_{\Lambda_{\mathrm{Enr}}}\to M_{\mathrm{Enr}}$ and moduli discription.
From \cite{GHS} and \cite{GH}, the ramification divisors are
\[\mathcal{H}_{-2}\cup\mathcal{H}_{-4,special\mathchar`-even}.\]
On the other hand, let 
\[\widetilde{M_{Enr}}:= \widetilde{O}^{+}(\Lambda_{Enr})\backslash \D_{L_{Enr}}\]
be a finite cover of $M_{\mathrm{Enr}}$.
Then the following are known. 
\begin{Prop}%[\cite{GH}]
\begin{enumerate}
    \item $M_{Enr}\backslash\pi(\mathcal{H}_{-2})$ is the so-called moduli space of Enriques surfaces (cf., e.g., 
    \cite{Namikawa}). 
    Moreover this is rational (Kondo \cite{Kondo.Enriques}). 
    \item $\widetilde{M_{Enr}}\backslash\pi(\mathcal{H}_{-2})$, which is a finite cover of $M_{\mathrm{Enr}}$, is the moduli space of Enriques surfaces with a certain level-2 structure.
    Moreover $\widetilde{M_{Enr}}$ and $\widetilde{M_{Enr}}\backslash\pi(\mathcal{H}_{-2})$ are of general type (Gritsenko-Hulek cf., \cite{GH}).
    \item $M_{Enr}\backslash(\pi(\mathcal{H}_{-2})\cup\pi(\mathcal{H}_{-4,special\mathchar`-even}))$ is the moduli space of non-nodal Enriques surfaces.
\end{enumerate}
\end{Prop}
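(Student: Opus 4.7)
The plan is to combine the global Torelli theorem for Enriques surfaces (Horikawa, Namikawa) with a lattice-theoretic description of each excluded divisor. The Torelli theorem identifies an open subset of $M_{Enr}$ with the coarse moduli of Enriques surfaces; the three parts then amount to determining which divisors must be removed to recover each flavor of moduli, and then invoking the birational-type theorems of Kondo and Gritsenko-Hulek.

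For (i), I would first verify that $\pi(\mathcal{H}_{-2})$ is the correct locus to remove. A vector $\ell\in\Lambda_{Enr}$ of norm $-2$ lifts, via the embedding of $\Lambda_{Enr}$ into the K3 lattice as the anti-invariant part for the Enriques involution, to a $(-2)$-class on the K3 cover which by Riemann--Roch represents an effective curve fixed by the involution, contradicting fixed-point-freeness. Hence $M_{Enr}\setminus \pi(\mathcal{H}_{-2})$ parametrizes only honest Enriques periods, and surjectivity is the content of Torelli. Rationality is then a direct citation of Kondo.

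For (ii), the step is to interpret $\widetilde{O}^+(\Lambda_{Enr})$ geometrically. Since $\Lambda_{Enr}$ is $2$-elementary, its discriminant group is an $\F_2$-vector space on which the quotient $O^+(\Lambda_{Enr})/\widetilde{O}^+(\Lambda_{Enr})$ acts faithfully; a $\widetilde{O}^+$-marking is thus exactly a choice of $2$-torsion data on cohomology, conventionally called a level-$2$ structure on the Enriques surface. The general-type assertion itself is deferred entirely to Gritsenko--Hulek, whose proof constructs sufficiently many pluricanonical sections by applying Borcherds lifts to nearly-holomorphic modular forms of suitable principal part.

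For (iii), the point is that a special-even norm $-4$ vector in $\Lambda_{Enr}$ lifts to a primitive anti-invariant norm $-2$ vector on the K3 cover, which by Riemann--Roch is represented by a smooth rational curve whose image on the Enriques surface is again a smooth rational curve, i.e., a node in classical terminology. Removing this divisor therefore gives the moduli of non-nodal Enriques surfaces by definition. The main subtlety in the whole plan is the lattice-theoretic verification in (iii) that the ``special-even'' condition is exactly what ensures the lift to the K3 lattice is primitive after the rescaling induced by the $U(2)$ and $E_8(-2)$ summands of $\Lambda_{Enr}$; parts (i) and (ii) genuinely reduce to invoking the cited literature.
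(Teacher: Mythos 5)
First, note that the paper itself offers no proof of this Proposition: it is stated as a recollection of known results (``Then the following are known'') with references to Namikawa, Kond\={o} and Gritsenko--Hulek, which are exactly the sources you also ultimately lean on. So for parts (i) and (ii) your outline is not in conflict with the paper---it is a reasonable sketch of the standard arguments behind those citations, with one imprecision in (i): the contradiction is not that the effective $(-2)$-class is ``fixed by the involution,'' but that the class is anti-invariant, so $\sigma^*$ sends it to its negative, and a nonzero class and its negative cannot both be effective.

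Part (iii), however, contains a genuine error in the very step you single out as the main subtlety. The lattice $\Lambda_{Enr}\cong U\oplus U(2)\oplus E_8(-2)$ is the anti-invariant part of the K3 lattice under a primitive \emph{isometric} embedding, so a special-even vector $\delta$ with $\delta^2=-4$ does not ``lift to a primitive anti-invariant norm $-2$ vector on the K3 cover'': its image still has norm $-4$, and there is no rescaling in the anti-invariant direction (the factor-of-two rescaling you allude to occurs on the \emph{invariant} lattice, i.e., on pullbacks of classes from the Enriques surface, not where the period and the Heegner divisors live). The correct mechanism is a decomposition rather than a lift: a smooth rational curve $R$ on the Enriques surface pulls back to $R_1+R_2$ with $R_1$, $R_2$ disjoint $(-2)$-curves interchanged by the involution, and $\delta:=R_1-R_2$ is anti-invariant with $\delta^2=-4$ and $(\delta,x)=2(R_1,x)\in 2\mathbb{Z}$ for every anti-invariant $x$, which is exactly the special-even condition; conversely one must show that every special-even $(-4)$-vector orthogonal to the period arises this way. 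As written, your argument for (iii) does not go through and needs to be replaced by this decomposition argument (or by a direct citation, as the paper does).
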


Going back to our situation, 
we need special reflective modular forms satisfying Assumption \ref{Ass2} (i). 
Our input here is the following.

\begin{Lem}[{\cite{Borcherds.Enriques, 
Kondo02}}] 
There exist two reflective modular forms $\Phi_4$ and $\Phi_{124}$ on $\D_{L_{Enr}}$ of weights $4$, $124$  respectively such that;
\begin{align*}
    {\rm div}{\Phi_4}&=\mathcal{H}_{-2},\\
    {\rm div}{\Phi_{124}}&=\mathcal{H}_{-4,\textrm{special-even}}.
\end{align*}
\end{Lem}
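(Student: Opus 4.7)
The plan is to construct both $\Phi_{4}$ and $\Phi_{124}$ by Borcherds' singular theta lift (\cite{Borcherds.general}, \cite[\S 1.3, \S 3.4]{Bruinier.book}). The input in each case is a nearly holomorphic vector-valued elliptic modular form $F$ of weight $\frac{2-n}{2}=-4$ for the Weil representation of $Mp_{2}(\Z)$ attached to the discriminant form of $\Lambda_{Enr}=U\oplus U(2)\oplus E_{8}(-2)$. The output of the lift is a meromorphic automorphic form on $\D_{\Lambda_{Enr}}$ whose weight is half the constant term of $F$ at $\infty$ and whose divisor is an explicit $\Z$-linear combination of special divisors $\mathcal{H}(\ell)$ read off from the principal Fourier part of $F$.

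For $\Phi_{4}$, I would choose $F$ with principal part supported only on the discriminant classes that lift to $(-2)$-vectors of $\Lambda_{Enr}$, normalized so that each $\mathcal{H}(\ell)$ with $\ell^{2}=-2$ contributes with multiplicity $1$. The resulting Borcherds product is then holomorphic of weight $4$ with divisor exactly $\mathcal{H}_{-2}$; this is the form constructed explicitly in \cite{Borcherds.Enriques}, where the required $F$ is realized concretely via theta series and $\eta$-quotients.

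For $\Phi_{124}$, the input is taken analogously, but with principal part supported on the discriminant classes of special-even $(-4)$-vectors, i.e., $\ell$ with $\ell^{2}=-4$ and $\mathrm{div}(\ell)=2$, so that the $(-4)$-vectors of divisor $1$ contribute nothing. The Borcherds weight formula then yields weight $124$ and divisor $\mathcal{H}_{-4,\text{special-even}}$. This is the construction carried out by Kond\={o} in \cite{Kondo02}.

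The main obstacle is guaranteeing the existence of the two input forms $F$ with the prescribed principal parts. By the Borcherds--Bruinier obstruction principle (cf.\ \cite[1.2]{Bruinier.article}), this is equivalent to an orthogonality condition against cusp forms of dual weight for the Weil representation of $\Lambda_{Enr}$, which in turn requires explicit control of the discriminant form of $U(2)\oplus E_{8}(-2)$. Both Borcherds and Kond\={o} overcome this by exhibiting $F$ explicitly through theta- and eta-type identities; once $F$ is in hand, the weight and divisor formulas of \cite{Borcherds.general} are routine, and holomorphicity of the product is read off from the non-negativity of the relevant Fourier coefficients.
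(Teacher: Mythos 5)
Your proposal is correct and follows essentially the same route as the paper, which proves this lemma simply by citing \cite{Borcherds.Enriques, Kondo02}: those references construct $\Phi_4$ and $\Phi_{124}$ exactly as Borcherds lifts of nearly holomorphic vector-valued forms of weight $-4$ with the prescribed principal parts, with the weight and divisor read off from the constant term and principal part as you describe. Your sketch of the obstruction issue and its resolution by explicit theta/eta constructions accurately reflects how the cited works establish existence, so there is nothing to add.
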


We put $F_{128}:= \Phi_4\Phi_{124}$.
Then this is a weight $128$ modular form on $\D_{L_{Enr}}$ and 
${\rm div}(F_{128})$ is  exactly the ramification divisors of the map  $\pi:\D_{L_{Enr}}\to M_{Enr}$ with coefficients $1$. 
Now $F_{128}^2$ has a trivial character and satisfies Assumption \ref{Ass2} (i) with $s(X)=\frac{32}{5}$ and by Theorem \ref{Fanoness} (i), $\overline{M_{Enr}}^{\mathrm{SBB}}$ is a log canonical Fano variety. 

Actually, it is even log terminal 
without naked cusps as we confirm in the folowing.  
By \cite[3.3, 4.5]{Sterk}, 
there are only two 
$0$-cusps which correspond 
to an isotropic vector $e$ in 
the first summand $U$ 
and 
an isotropic vector $e'$ 
the second summand 
$U(2)$ of $\Lambda_{\rm Enr}$. 
They belong 
to the same $1$-cusp 
which corresponds to 
isotropic plane 
$\mathbb{Q}e\oplus 
\mathbb{Q}e'$. 
That $1$-cusp is 
contained in the closure of 
$\mathcal{H}_{-4,special\mathchar`-even}$ since $e$ and $e'$ 
are orthogonal to the 
(norm-doubled) root of 
$ E_8(-2)$, 
the third summand of 
$L_{\rm Enr}$. By 
{\it loc.cit}, 
the only other 
$1$-cusp corresponds to 
another isotropic plane 
$$p=\mathbb{Q}e'\oplus
\mathbb{Q}(2e+2f+\alpha)$$ 
where $e,f$ is the 
standard basis of the 
first summand $U $ 
and $\alpha$ is norm $-8$ 
integral vector in the 
third summand 
$ E_8(-2)$. 
Since $p$ is obviously 
orthogonal to the 
$-2$ vector $e-f\in 
U $, 
the corresponding 
$1$-cusp is also 
contained in the closure of 
the Coble locus $\mathcal{H}_{-2}$. 
Hence 
there are no naked cusps 
so that we conclude the following. 

\begin{Cor}
The 
Satake-Baily-Borel 
compactification $\overline{M_{Enr}}^{\mathrm{SBB}}$  of the 
moduli of Enriques surfaces 
$M_{Enr}$ 
is a log terminal 
Fano variety. 
\end{Cor}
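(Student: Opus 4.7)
The plan is to build on the identification, already noted just above, that $F_{128}^2=(\Phi_4\Phi_{124})^2$ is a section satisfying Assumption \ref{Ass2} \eqref{item2.1} with slope $s(X)=32/5>1$. Applying Theorem \ref{Fanoness} \eqref{Fano} directly yields that $-K_{\overline{M_{Enr}}^{\mathrm{SBB}}}$ is ample and that $\overline{M_{Enr}}^{\mathrm{SBB}}$ has at worst log canonical singularities; this already gives the Fano half of the corollary.

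To upgrade log canonical to log terminal, I would appeal to Corollary \ref{connected}, which identifies the non-log-terminal locus of $\overline{X}^{\mathrm{SBB}}$ with the closure of the naked locus $\partial\overline{X}^{\mathrm{SBB}}\setminus\bigcup_i\overline{B_i}^{\mathrm{SBB}}$. It therefore suffices to verify that every cusp of $\overline{M_{Enr}}^{\mathrm{SBB}}$ lies in the closure of some branch divisor, i.e., that no cusp is naked. Here the branch locus is exactly $\overline{\pi(\mathcal{H}_{-2})}^{\mathrm{SBB}}\cup\overline{\pi(\mathcal{H}_{-4,\textrm{special-even}})}^{\mathrm{SBB}}$ by the description of ramification divisors recalled at the start of the example.

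The heart of the argument is then the cusp analysis, for which I would cite Sterk's classification in \cite[3.3, 4.5]{Sterk}: there are exactly two $0$-cusps, represented by isotropic $e\in U$ and $e'\in U(2)$, both sitting inside a single $1$-cusp corresponding to the isotropic plane $\mathbb{Q}e\oplus\mathbb{Q}e'$, plus one further $1$-cusp corresponding to an isotropic plane $p=\mathbb{Q}e'\oplus\mathbb{Q}(2e+2f+\alpha)$, where $e,f$ is a standard basis of the first summand $U$ and $\alpha\in E_8(-2)$ has norm $-8$. Any root $\beta$ in the third summand $E_8(-2)$ (norm $-4$, special-even) is orthogonal to $\mathbb{Q}e\oplus\mathbb{Q}e'$, placing the first $1$-cusp together with both $0$-cusps inside $\overline{\pi(\mathcal{H}_{-4,\textrm{special-even}})}^{\mathrm{SBB}}$; the norm $-2$ vector $e-f\in U$ is orthogonal to $p$, placing the second $1$-cusp inside $\overline{\pi(\mathcal{H}_{-2})}^{\mathrm{SBB}}$. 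This exhausts the boundary, and Corollary \ref{connected} then delivers log terminality.

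The main obstacle is not any single computation but the completeness of the cusp enumeration: I rely on \cite[3.3, 4.5]{Sterk} for this. Without such a reference, one would have to carry out a direct $O^+(\Lambda_{Enr})$-orbit classification of isotropic lines and isotropic planes in $\Lambda_{Enr}=U\oplus U(2)\oplus E_8(-2)$, which is the most delicate step of the proof. Once that list is in hand, the orthogonality verifications above are essentially immediate from the orthogonal decomposition of $\Lambda_{Enr}$.
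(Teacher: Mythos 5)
Your proposal is correct and follows essentially the same route as the paper: Theorem \ref{Fanoness}~(i) applied to $F_{128}^2$ with $s(X)=32/5$ for Fano-ness and log canonicity, then Corollary \ref{connected} plus Sterk's enumeration of the two $0$-cusps and two $1$-cusps to show every cusp lies in the closure of $\pi(\mathcal{H}_{-4,\textrm{special-even}})$ (via a norm-doubled root of $E_8(-2)$) or of $\pi(\mathcal{H}_{-2})$ (via $e-f$), hence no naked cusps and log terminality. The orthogonality checks you describe are exactly the ones the paper performs.
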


\end{Ex}

\vspace{3mm}
\begin{Ex}[Moduli of log Enriques surfaces]
\label{log.Enriques}
For each $1\le k\le 10\ (k\neq 2)$, 
let $\Lambda_{\mathrm{logEnr},k}:= U (2)\oplus A_1\oplus A_1(-1)^{\oplus 9-k}$ be an even lattice of signature $(2,10-k)$. 
Then the associated Shimura variety $O^+(\Lambda_{logEnr,k})\backslash \D_{L_{logEnr}}$ is a 
(partial compactification of) the 
moduli space of log Enriques surface with 
$k$ $\frac{1}{4}(1,1)$ singularities. 
For the definition of log Enriques surfaces with $\frac{1}{4}(1,1)$ singularities, see \cite[Definition 2.1, 2.6]{DY}.
Yoshikawa \cite{Yoshikawa2} and Ma \cite{MaYos} constructed reflective modular forms on $\D_{L_{logEnr}}$ for $k\le 7$ 
which we use. 
\begin{Thm}[{\cite[Theorem 4.2(i)]{Yoshikawa2}}]
There is a reflective modular form $\Psi_4$ of weight $4+k$ on $\D_{\Lambda_
{logEnr,k}}$ with 
\[{\rm div}{\Psi_{4+k}}=\mathcal{H}_{-2}.\]
\end{Thm}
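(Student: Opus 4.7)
I would construct $\Psi_{4+k}$ as a Borcherds lift (cf.\ \cite{Borcherds.general}, \cite[\S 1.3, \S 3.4]{Bruinier.book}). Since $\Lambda_{\mathrm{logEnr},k} = U(2) \oplus A_1 \oplus A_1(-1)^{\oplus 9-k}$ has signature $(2,10-k)$, the appropriate input is a vector-valued nearly holomorphic modular form $F_k$ of weight $1-(10-k)/2 = (k-8)/2$ for the (dual of the) Weil representation attached to the discriminant form $A_{\Lambda_{\mathrm{logEnr},k}}$.

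\textbf{Steps.} First, compute the discriminant form: all three summands are $2$-elementary, so $A_{\Lambda_{\mathrm{logEnr},k}} \cong (\mathbb{Z}/2\mathbb{Z})^{12-k}$ with an explicit $\mathbb{Q}/2\mathbb{Z}$-valued quadratic form read off the Gram matrices, pinning down the Weil representation and the corresponding spaces of vector-valued forms of each weight. Second, exhibit a nearly holomorphic $F_k$ whose principal part is supported exactly in the components indexed by classes $\gamma$ lifting $(-2)$-vectors of $\Lambda_{\mathrm{logEnr},k}$, with leading Fourier coefficient $q^{-1}$ and multiplicity matching the orbit structure, while all other negative Fourier coefficients vanish; existence reduces, by Borcherds--Bruinier obstruction theory (cf.\ \cite[1.2]{Bruinier.article}), to a finite linear vanishing condition against holomorphic cusp forms of weight $(12-k)/2$ for the dual representation. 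Third, apply Borcherds' product construction to $F_k$: the resulting meromorphic modular form has weight $c_0(0,0)/2$ and divisor $\sum_{\gamma,\,m<0} c_\gamma(m)\,\mathcal{H}(\gamma,m)$, so if $F_k$ is chosen with $c_0(0,0) = 8+2k$ and no negative coefficients outside the $(-2)$-classes, the output is precisely a weight $4+k$ form with ${\rm div}(\Psi_{4+k}) = \mathcal{H}_{-2}$.

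\textbf{Main obstacle.} The crux is Step 2: producing a principal part that cuts out \emph{only} $\mathcal{H}_{-2}$ (and not $\mathcal{H}_{-4}$ or other Heegner divisors) while simultaneously forcing the constant term $c_0(0,0)$ to equal $8+2k$. These are rigid compatibility constraints: the obstruction space has to be large enough to admit such an $F_k$, and the constant term has to be tracked via an explicit Eisenstein-type computation for each $k$. Yoshikawa \cite{Yoshikawa2} circumvents this combinatorial labor by constructing $\Psi_{4+k}$ instead as an equivariant analytic torsion invariant of log Enriques surfaces with $\frac{1}{4}(1,1)$ singularities, whose modular weight and zero locus are controlled intrinsically by the geometry of the moduli problem and by a Bismut--Lebeau-type immersion formula, rather than by a Fourier-coefficient matching on the input side of the Borcherds lift.
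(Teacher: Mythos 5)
The first thing to note is that the paper does not prove this statement at all: it is imported verbatim as \cite[Theorem 4.2(i)]{Yoshikawa2}, so there is no internal proof to compare against. Your outline of the Borcherds-lift route is numerically consistent --- the input weight $(k-8)/2$, the discriminant group $(\mathbb{Z}/2\mathbb{Z})^{12-k}$, the obstruction space of cusp forms of weight $(12-k)/2$, and the required constant term $c_0(0,0)=8+2k$ are all correct for a lattice of signature $(2,10-k)$. But as a standalone proof the proposal has a genuine gap, and it sits exactly where you say it does: Step 2, the existence of a nearly holomorphic input form whose principal part is supported only on the $(-2)$-classes with the prescribed constant term, \emph{is} the theorem. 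Nothing in your argument establishes that the Bruinier obstruction pairing actually vanishes for this principal part, nor that the constant term comes out to $8+2k$ rather than some other value forced by the Eisenstein series; these are not formalities, since for a generic $2$-elementary discriminant form such a form need not exist (and indeed the paper only invokes the result for $k\le 7$ --- for larger $k$ the input weight $(k-8)/2$ becomes nonnegative and the whole framework shifts). So you have correctly reduced the theorem to a finite computation without performing it.

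Your closing remark is the right diagnosis: Yoshikawa's actual construction in the cited source goes through equivariant analytic torsion of $2$-elementary K3 surfaces / log Enriques surfaces, where the weight and the zero divisor are read off from the curvature and singularity of the torsion invariant rather than from Fourier coefficients of a lift, and the identification with a Borcherds product is a theorem rather than the definition. Since you explicitly defer the crux to \cite{Yoshikawa2}, your proposal is best read as an honest reduction plus a citation --- which is, in effect, what the paper itself does --- rather than an independent proof. To upgrade it to a proof you would need either to exhibit $F_k$ explicitly (e.g.\ by writing down its principal part and checking the finitely many obstruction conditions for each $k\le 7$) or to carry out the torsion construction.
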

\begin{Thm}[{\cite[Appendix by Yoshikawa;  A.4, 
proof of A.5]{MaYos}}]
There is a reflective modular form 
$\Psi_{124,k}$ of weight 
$-k^2-9k+124$ 
on $\D_{\Lambda_{logEnr,k}}$ with 
\[{\rm div}{\Psi_{124,k}}=\mathcal{H}_{-4}.\]
\end{Thm}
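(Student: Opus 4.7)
The plan is to construct $\Psi_{124,k}$ as a Borcherds automorphic product on $\D_{\Lambda_{logEnr,k}}$. By Borcherds' theorem (\cite{Borcherds.general}; see also \cite[Theorem 13.3]{Bruinier.book}), it is enough to exhibit a weakly holomorphic vector-valued modular form $F$ of weight $1-(10-k)/2=(k-8)/2$ for the Weil representation attached to the (2-elementary) discriminant form of $\Lambda_{logEnr,k}$, whose principal part is supported only on the components $\gamma$ with $Q(\gamma)\equiv -1 \pmod{\Z}$ (so that the Heegner divisors appearing in the product are generated by norm $-4$ vectors), and whose trivial-component constant term equals $2(-k^2-9k+124)$. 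Borcherds' lift will then output a holomorphic automorphic form on $\D_{\Lambda_{logEnr,k}}$ of weight $-k^2-9k+124$ and divisor $\mathcal{H}_{-4}$, which is exactly the asserted form.

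There are two natural routes to produce such an $F$. The first is direct construction via the obstruction/duality pairing: by Borcherds' obstruction criterion (\cite[Theorem 3.16]{Bruinier.book}), existence of a weakly holomorphic form with a prescribed principal part is equivalent to vanishing of the induced pairing against all holomorphic cusp forms of dual weight $(12-k)/2$ for the dual Weil representation; for $k \le 7$ this obstruction space is small (and typically trivial), which is presumably why the hypothesis is restricted to $k \le 7$. The second route is quasi-pullback from the already-known form $\Phi_{124}$ on $\D_{\Lambda_{Enr}}$: choose a primitive embedding $\Lambda_{logEnr,k} \hookrightarrow \Lambda_{Enr}$ whose orthogonal complement $N$ is a negative-definite $2$-elementary lattice of rank $k$, pull back $\Phi_{124}$ to $\D_{\Lambda_{logEnr,k}}$, and combine with auxiliary Borcherds products so as to adjust both the weight and the divisor (passing from $\mathcal{H}_{-4,\text{special-even}}$ on $\D_{\Lambda_{Enr}}$ to $\mathcal{H}_{-4}$ on $\D_{\Lambda_{logEnr,k}}$, using the discriminant-form comparison of which $(-4)$-vectors are special-even on each side).

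The main obstacle is the precise matching of the input's principal part and constant term: the divisor condition and the weight formula $-k^2-9k+124$ pin down $F$ very rigidly, and one has to verify its existence by an explicit discriminant-form computation for each $k$ (excluding $k=2$). Secondarily, one must check that the resulting product is genuinely holomorphic rather than merely meromorphic, and that no spurious contributions from $\mathcal{H}_{-2}$ or from other Heegner divisors enter. The quadratic dependence of the weight on $k$ strongly suggests that, in the quasi-pullback approach, the number of ``linear'' factors divided out scales linearly in $k$ via the root system of $N$, and a careful bookkeeping of these factors together with the multiplicities in the principal part of the input form should yield the asserted weight $-k^2-9k+124$.
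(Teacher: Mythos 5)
The paper does not prove this statement: it is imported verbatim from Yoshikawa's appendix to \cite{MaYos}, where $\Psi_{124,k}$ is constructed as an explicit Borcherds lift of a concretely given vector-valued form (built out of eta products and theta series for the $2$-elementary discriminant form of $\Lambda_{\mathrm{logEnr},k}$). Your first route is therefore the right framework, and your bookkeeping of the input data (weight $(k-8)/2$, constant term $2(-k^2-9k+124)$, principal part supported on the cosets producing norm $-4$ Heegner divisors) is the correct dictionary. The problem is that your proposal stops exactly where the theorem begins: the existence of a weakly holomorphic $F$ with that precise principal part, the verification that the resulting product is holomorphic with divisor exactly $\mathcal{H}_{-4}$ (multiplicity one, no spurious $\mathcal{H}_{-2}$ or other Heegner components), and the derivation of the weight $-k^2-9k+124$ are all deferred to ``an explicit discriminant-form computation for each $k$'' that you do not carry out. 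Invoking the obstruction criterion and asserting that the obstruction space is ``typically trivial'' for $k\le 7$ is not a proof; the whole content of the theorem is that this computation works out, with that specific weight, for $1\le k\le 7$, $k\neq 2$.

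Your second route has a directional error. Quasi-pullback along a primitive embedding $\Lambda_{\mathrm{logEnr},k}\hookrightarrow\Lambda_{\mathrm{Enr}}$ \emph{raises} the weight by the number of pairs of roots in the orthogonal complement that are divided out, so starting from the weight-$124$ form $\Phi_{124}$ it can only produce forms of weight $\ge 124$, whereas the target weight $124-k(k+9)$ is strictly smaller for $k\ge 1$. (Your closing remark attributes the weight \emph{drop} $k^2+9k$ to the divided-out linear factors, which is backwards.) Compensating by dividing by auxiliary Borcherds products would generically destroy holomorphy, and you have not checked that a primitive embedding with the required $2$-elementary complement exists; note that $\Lambda_{\mathrm{logEnr},k}=U(2)\oplus A_1\oplus A_1(-1)^{\oplus 9-k}$ is not in any evident way a sublattice of $U\oplus U(2)\oplus E_8(-2)$. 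As it stands, neither route yields the theorem.
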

On the other hand, the ramification divisors of the map $\pi\colon \D_{L_{logEnr,k}}\to X:= O^+(L_{logEnr,k})\backslash \D_{L_{logEnr,k}}$ is the union of special divisors with respect to $(-2)$-vectors and $(-4)$-vectors by the same discussion. 
As 
$(\Psi_{4+k}\Psi_{124,k})^{t(10-k)}$ 
with $t\in\Z_{>0}$ 
satisfies Assumption 
\ref{Ass2} (i) 
with $s(X)=\frac{-k^2-8k+128}{2(10-k)}$ for $k\leq 7$,
by Theorem 1.3 (i), we conclude 
the following. 
\begin{Cor}
For the above (partially 
compactified) 
moduli spaces of log Enriques surface with 
$k$ $\frac{1}{4}(1,1)$ singularities 
with $1\le k\le 7\ (k\neq 2)$  $X=O^+(\Lambda_{logEnr,k})\backslash \D_{L_{logEnr}}$,  
the Satake-Baily-Borel compactifications 
$\overline{X}^{\mathrm{SBB}}$ are 
Fano varieties. 
\end{Cor}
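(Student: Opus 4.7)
The plan is to combine the two reflective modular forms $\Psi_{4+k}$ and $\Psi_{124,k}$ into a single modular form whose divisor coincides exactly with the full modular branch locus, then verify numerically that the resulting slope exceeds $1$ in the stated range of $k$, and finally invoke Theorem \ref{Fanoness}~\eqref{Fano}.

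First I would set $F := \Psi_{4+k}\cdot \Psi_{124,k}$, which is a modular form on $\mathcal{D}_{\Lambda_{\text{logEnr},k}}$ of weight
$$w(k) := (4+k) + (-k^{2}-9k+124) = -k^{2}-8k+128,$$
with divisor
$$\mathrm{div}(F) = \mathcal{H}_{-2} + \mathcal{H}_{-4}.$$
By the same Gritsenko--Hulek--Sankaran style computation \cite{GHS} used in the preceding examples, for $\Gamma = O^{+}(\Lambda_{\text{logEnr},k})$ the branch divisors of the quotient map $\pi$ are precisely $\mathcal{H}_{-2}\cup\mathcal{H}_{-4}$, each with ramification degree~$2$, so $\mathrm{div}(F)$ is exactly the reduced branch divisor. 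To clear denominators and to kill the character, I would then take the auxiliary power $F^{2t(10-k)}$ for a sufficiently divisible $t\in \Z_{>0}$; this produces a non-vanishing holomorphic section of $\mathcal{O}_X\bigl(N(s(X)L - \tfrac{1}{2}\sum_{i} B_{i})\bigr)$ with $N = 2t(10-k)$, thereby matching Assumption \ref{Ass2}~\eqref{item2.1}.

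Next I would compute the slope. By Remark \ref{rem:orthog_a}, with $n = \dim X = 10-k$, multiplicity $m = 1$, and weight $w(k)$,
$$
s(X) \;=\; \frac{w(k)}{2\cdot 1\cdot(10-k)} \;=\; \frac{-k^{2}-8k+128}{2(10-k)},
$$
which agrees with the value announced in the excerpt. The condition $s(X)>1$ reduces to $k^{2}+6k-108<0$, whose positive root is $-3+\sqrt{117}\approx 7.82$, so $s(X)>1$ holds precisely for $1\le k\le 7$. A direct tabulation gives $s(X) = \tfrac{119}{18}, \tfrac{95}{14}, \tfrac{20}{3}, \tfrac{63}{10}, \tfrac{11}{2}, \tfrac{23}{6}$ for $k=1,3,4,5,6,7$ respectively, each comfortably greater than $1$.

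Finally, applying Theorem \ref{Fanoness}~\eqref{Fano} to $F^{2t(10-k)}$ with this value of $s(X)$ yields that $\overline{X}^{\mathrm{SBB}}$ has only log canonical singularities and that $-K_{\overline{X}^{\mathrm{SBB}}}$ is ample, i.e.\ $\overline{X}^{\mathrm{SBB}}$ is a Fano variety. The only genuinely delicate point in this argument is the verification that $\mathrm{div}(F)$ contains no extraneous components beyond $\mathcal{H}_{-2}\cup\mathcal{H}_{-4}$ and that these are indeed the full branch divisors (not just a subset); for the lattice $\Lambda_{\text{logEnr},k}$ this relies on the explicit description of discriminant classes in \cite{GHS} together with the characterization of the divisors of $\Psi_{4+k}$ and $\Psi_{124,k}$ recorded in \cite{Yoshikawa2, MaYos}, which we quote. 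The remaining numerical check, while tedious, is the routine part of the argument.
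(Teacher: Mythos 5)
Your proposal is correct and follows essentially the same route as the paper: form the product $\Psi_{4+k}\Psi_{124,k}$ of total weight $-k^{2}-8k+128$, identify its divisor with the full branch locus $\mathcal{H}_{-2}\cup\mathcal{H}_{-4}$ (ramification degree $2$), raise it to a suitable power to meet Assumption \ref{Ass2}~\eqref{item2.1}, compute $s(X)=\frac{-k^{2}-8k+128}{2(10-k)}>1$ for $1\le k\le 7$, and apply Theorem \ref{Fanoness}~\eqref{Fano}. The extra numerical tabulation and the explicit flagging of the divisor-matching step are harmless elaborations of what the paper does by citation to \cite{GHS}, \cite{Yoshikawa2} and \cite{MaYos}.
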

Actually, they are also unirational, by \cite{MaYos}.

\end{Ex}
\vspace{3mm}
\begin{Ex}[Simple lattices case]
Let $\Lambda$ be a quadratic lattice over $\Z$ of signature $(2,n)$.
We recall from \cite{BEF} that $\Lambda$ is called 
{\it simple} if the space of cusp forms of weight $1+\frac{n}{2}$ associated with a finite quadratic form $\Lambda^{\vee}/\Lambda$ is zero.
Then the special divisors on $\D_{\Lambda}$ are all 
given by the divisors of Borcherds lift, 
so that we can apply Theorem \ref{Fanoness}. 

In fact, Wang-Williams \cite{WW} showed that for every simple lattice $\Lambda$ of signature $(2,n)$ with $3\leq n\leq 10$, the graded algebra of modular forms for certain subgroups of the  orthogonal group is freely generated.
From this, we have the associated Shimura varieties are weighted projective spaces, in particular, 
log terminal $\Q$-Fano. 

From Theorem \ref{Fanoness}, 
all Borcherds product satisfying Assumption 
\ref{Ass2} (i) should have $s(X)>1$. 
Also from Corollary \ref{connected}, the boundary of the 
Satake-Baily-Borel compactification is in the 
closure of the branch divisors. 
See the tables of examples in \cite{WW}. 

We remark that 
before \cite{WW}, \cite{BEF} showed there are only finitely many isometry classes of even  simple lattices $\Lambda$ of signature $(2,n)$. 
\end{Ex}

%%%%%%%%%%%%%%%%%%%%%%%%%%%%%%%%%%%%%%%%%%%%%%%%%%%%%%%%%%%%%%%%%%%%%%%%

\vspace{3mm}
\subsection{Preparation for unitary case - Hermitian lattice}
\label{prepare.Hlattice}
Here, we recall some material on 
Hermitian lattices treated in  
\cite{Hofmann} to prepare 
for constructing some examples of 
unitary 
modular varieties from the next 
subsection. There, 
we similarly apply 
Theorem \ref{Fanoness} to 
certain restriction of 
Borcherds products to 
explore their 
birational properties. 

Here is the setup. 
Let $F=\Q(\sqrt{d})$ be an  imaginary quadratic field for a square-free negative integer $d$, and $\OO_F$ be its ring of integers.
Let $\delta$ be the inverse different of $F$, i.e., 
\[\delta := 
\begin{cases}
\frac{1}{2\sqrt{d}}\quad(d\equiv 2,3\bmod 4),\\
\frac{1}{\sqrt{d}}\quad(d\equiv 1 \bmod 4).
\end{cases}\]
Let 
$(\Lambda,\langle\  ,\ \rangle)$ be a Hermitian lattice of signature $(1,n)$ over $\OO_F$ in the sense of \cite{Hofmann} 
i.e., a finite free $\OO_F$-module with an 
Hermitian form which is 
$\delta\OO_F$-valued. 
We define the dual lattice $\Lambda^{\vee}$ as  
\[\Lambda^{\vee} := \{v\in \Lambda\otimes_{\OO_F}F\mid \langle v,w\rangle \in\delta\OO_F\ (\forall w\in\Lambda)\}.\] 
We say $\Lambda$ is unimodular if $\Lambda=\Lambda^{\vee}$ and $\Lambda$ is even if $\langle v,v\rangle\in\Z$ for all $v\in \Lambda$.
The latter means the associated quadratic form is even.
It is also known that the quotient $A_\Lambda=\Lambda^{\vee}/\Lambda$ is a finite $\OO_F$-module, called the discriminant group. Then, 
$\widetilde{U}(\Lambda) := \{g\in U(\Lambda)\mid g\vert_
{A_\Lambda}=1_{A_\Lambda}\}$
is the so-called discriminant kernel or the stable unitary group. 
 For a Hermitian lattice $\Lambda$, we define $\Lambda(a) :=  (\Lambda,a\langle\ ,\ \rangle)$ for $a\in\delta \OO_F$.
Analogously to quadratic forms, we also have the following proposition. 

\begin{Prop}
\label{divisor_scaler_unitary}
There exists a unimodular Hermitian lattice $M$ and an element  $b\in\OO_F$ such that $\Lambda=M(b)$ if and only if the ideal $\{\langle v, w\rangle\in\delta\OO_F\mid w\in\Lambda\}$ with respect to $v\in\Lambda$ is equal $b\delta\OO_F$ for every primitive element $v\in\Lambda$. 
\end{Prop}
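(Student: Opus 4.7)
The plan is to prove the two directions by reducing everything to the characterization that a Hermitian lattice $L$ over $\OO_F$ is unimodular if and only if $\{\langle v,w\rangle\in\delta\OO_F\mid w\in L\}=\delta\OO_F$ for every primitive $v\in L$. This characterization is the key input; I would isolate it as a preliminary lemma. One direction of the lemma is easy: if $L$ is unimodular, the evaluation map $L\to\mathrm{Hom}_{\OO_F}(L,\delta\OO_F)$, $v\mapsto \langle v,\cdot\rangle$, is an isomorphism, and standard Dedekind-domain arguments (localizing at each prime of $\OO_F$, where $L$ is free so that primitive vectors extend to bases) show that the image of a primitive $v$ is a surjection onto $\delta\OO_F$. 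For the converse direction of the lemma, one checks localizing at each prime $\mathfrak{p}$ of $\OO_F$ reduces to the PID case, where primitive vectors can be completed to a basis and the condition on the ideal forces the Gram matrix at that basis to have $\delta$-unit determinant.

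Given this lemma, the forward direction of the proposition is immediate: if $\Lambda=M(b)$ with $M$ unimodular, then primitive elements of $\Lambda$ coincide with primitive elements of $M$ as $\OO_F$-modules, and
\[
\{\langle v,w\rangle_{\Lambda}\mid w\in\Lambda\}
= b\cdot\{\langle v,w\rangle_{M}\mid w\in M\}=b\delta\OO_F
\]
by the lemma applied to $M$.

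For the reverse direction, I would first upgrade the hypothesis from primitive vectors to all vectors: any $v\in\Lambda$ can be written locally at each prime as $v=\lambda v'$ with $v'$ primitive and $\lambda\in\OO_{F,\mathfrak{p}}$, so $\langle v,w\rangle_\Lambda\in b\delta\OO_F$ for all $v,w\in\Lambda$. This guarantees that the rescaled form $\langle\cdot,\cdot\rangle_M:=\langle\cdot,\cdot\rangle_\Lambda/b$ is still $\delta\OO_F$-valued, so $M:=(\Lambda,\langle\cdot,\cdot\rangle_M)$ is a genuine Hermitian lattice and $\Lambda=M(b)$ by construction. Applying the reverse direction of the preliminary lemma to $M$ (using that the ideal for primitive $v$ in $M$ is now exactly $\delta\OO_F$) gives that $M$ is unimodular.

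The main obstacle is the preliminary lemma itself, and within it, the subtlety that $\OO_F$ need not be a PID, so primitive vectors need not extend to global bases of $\Lambda$. I would handle this by passing to localizations at primes of $\OO_F$, where the Hermitian lattice becomes free and the standard completion-of-primitive-vector-to-basis argument applies, then reassemble via the local-global principle for Dedekind modules. A minor secondary issue is that the hypothesis on the ideal being $b\delta\OO_F$ implicitly requires this to be a principal ideal, so $b$ is only determined up to units of $\OO_F$; but this ambiguity is absorbed by the identification $\Lambda=M(b)\cong M(ub)$ up to the unitary change of basis scaling $M$ by a unit, so it does not affect the statement.
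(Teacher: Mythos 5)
The paper offers no proof of Proposition \ref{divisor_scaler_unitary} at all --- it is asserted with only the remark that it is ``analogous to quadratic forms'' --- so there is no argument of the authors to compare yours against. Judged on its own, your reduction is correct and well organized: isolating the characterization ``$L$ is unimodular if and only if $\{\langle v,w\rangle \mid w\in L\}=\delta\OO_F$ for every primitive $v$'' as the key lemma, deducing the forward direction by the scaling $\{\langle v,w\rangle_{\Lambda}\}=b\{\langle v,w\rangle_M\}$, and deducing the reverse direction by first upgrading the hypothesis to all vectors so that $b^{-1}\langle\cdot,\cdot\rangle_{\Lambda}$ is again $\delta\OO_F$-valued, is exactly the right structure, and the unit ambiguity in $b$ is correctly dismissed.

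Two details need more care than you give them. First, you should localize at rational primes $p$ rather than at primes $\mathfrak{p}$ of $\OO_F$: the form is conjugate-linear in one variable, so at a split prime it pairs $L_{\mathfrak{p}}$ with $L_{\overline{\mathfrak{p}}}$ and does not restrict to a Hermitian form on $L_{\mathfrak{p}}$ alone; the correct base for the free-module argument is the conjugation-stable semilocal PID $\OO_F\otimes\Z_{(p)}$, over which primitive vectors still extend to bases (using CRT across the two maximal ideals when $p$ splits). Second, in the converse direction of your lemma, applying the hypothesis only to the vectors of a chosen basis is not enough: each row of the rescaled Gram matrix $A=\delta^{-1}H$ generating the unit ideal does not force $\det A$ to be a unit, as $\left(\begin{smallmatrix}1&1\\ 1&1+p\end{smallmatrix}\right)$ shows. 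You must invoke the hypothesis for \emph{all} primitive $v$: if $A$ were singular modulo a maximal ideal $\mathfrak{m}$, lifting a nonzero vector of the kernel of $A\bmod\mathfrak{m}$ to a unimodular row produces a primitive $v$ with $\{\langle v,w\rangle\mid w\}\subset\mathfrak{m}\,\delta\OO_F$, contradicting the assumption. With these two adjustments your proof is complete.
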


Let $D_\Lambda$ be the Hermitian symmetric domain (complex ball) with respect to $U(\Lambda)(\R)$, equivalently, 
\[D_\Lambda := \{v\in\mathbb{P}(\Lambda\otimes\C)\mid \l v,v\r>0\}\]
and $H(v)$ be the special divisor with respect to $v\in\Lambda$.
For any element $r\in\Lambda$ satisfying $\langle r,r\rangle <0$ and $\xi\in\OO_F^{\times}\backslash \{1\}$, we define the {\em quasi-reflection}  $\tau_{r,\xi}\in U(\Lambda)(\Q)$ with respect to $r$, $\xi$ as follows:
\[\tau_{r,\xi}(\ell) := \ell -(1-\xi)\frac{\langle \ell,r\rangle}
{\langle r,r\rangle}r.\]
Note that for $\xi=-1$, we have the usual reflection.
See also \cite{AF}. 
We also remark that, for example, for $F=\Q(\sqrt{-1})$, we get $\tau_{r,\sqrt{-1}}^2=\tau_{r,-1}$ and for $F=\Q(\sqrt{-3})$, we get $\tau_{r,\omega}^2=\tau_{r,\overline{\omega}}$ for any $r\in \Lambda$ where $\omega$ is a primitive third root of unity.

The union of ramification divisors of $\pi_{\Gamma}\colon D_\Lambda\to\Gamma\backslash D_\Lambda$ is 
  \[\bigcup_{r} H(r)\]
  by \cite[Corollary 3]{Behrens} 
  for $\Gamma\subset U(\Lambda)$ and 
  $n>1$.
  Here, the union runs thorough primitive elements  $r\in\Lambda/\OO_F^{\times}$ with $\l r,r\r<0$ such that $\eta\tau_{r,\xi}\in\Gamma$ for some $\eta\in\OO_F^{\times}$ and $\xi\in\OO_F^{\times}\backslash \{1\}$.
We consider the natural embedding of the type I domain to the type IV domain
\[\iota\colon D_\Lambda\hookrightarrow \D_{\Lambda_Q}\]
where $(\Lambda_Q,(\ ,\ ))$ is the quadratic lattice associated with $(\Lambda,\l\ ,\ \r)$, i.e., $\Lambda_Q:=\Lambda$ as a $\Z$-module and $(\ ,\ ):=\Tr_{F/\Q}\l\ ,\ \r$.
For the analysis of ramification divisors on $D_{\Lambda}$, we first prepare the following lemma.
\begin{Lem}
\label{ramification_pullback}
For $F=\Q(\sqrt{d})$, assume $d\equiv 2,3\bmod 4$ or $d=-3$.
Then \[\iota(\bigcup_{\substack{r\in\Lambda/\OO_F^{\times}\mathrm{:primitive}\\ \eta\tau_{r,\xi}\in U(\Lambda)\ \mathrm{for}\ \exists  \eta\in\OO_F^{\times},\ \exists  \xi\in\OO_F^{\times}\backslash \{1\}}}H(r))\subset\bigcup_{\substack{r\in \Lambda_Q/\pm\mathrm{:primitive}\\ \sigma_r\in O^+(\Lambda_Q) \ \mathrm{or}\  -\sigma_r\in O^+(\Lambda_Q)}}\mathcal{H}(r)\cap\iota(D_{\Lambda}).\]
\end{Lem}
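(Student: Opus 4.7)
The plan is to prove the containment divisor-by-divisor: for each primitive $r$ indexing the left-hand union, I will exhibit a primitive $r_0 \in \Lambda_Q$ lying in $\mathcal{O}_F \cdot r$ such that $\pm \sigma_{r_0} \in O^+(\Lambda_Q)$ and $\iota(H(r)) \subset \mathcal{H}(r_0)$. The natural choice, which I expect to work in both cases of the hypothesis, is $r_0 = r$ itself viewed in the underlying quadratic lattice $\Lambda_Q$.

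The first step is the divisor-level containment, which is essentially formal. The identity $(v, \alpha r)_Q = \Tr_{F/\Q}\langle v, \alpha r\rangle$ shows that Hermitian orthogonality $\langle v, r\rangle = 0$ implies quadratic orthogonality $(v, \alpha r)_Q = 0$ for every $\alpha \in \mathcal{O}_F$. Hence $\iota(H(r)) \subset \mathcal{H}(\alpha r)\cap \iota(D_\Lambda)$ for any nonzero $\alpha \in \mathcal{O}_F$. Primitivity of $r$ as an $\mathcal{O}_F$-module element also implies primitivity of $r$ in $\Lambda_Q$ as a $\Z$-module element, since any nontrivial $\Z$-factorization of $r$ would give a nontrivial $\mathcal{O}_F$-factorization.

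The substantive step is translating the Hermitian quasi-reflection hypothesis into the $\Z$-integrality condition needed for $\sigma_r \in O(\Lambda_Q)$. First observe that scalar multiplication by a unit is an isometry, so $\eta\tau_{r,\xi} \in U(\Lambda)$ reduces to $\tau_{r,\xi} \in U(\Lambda)$, and the composition law $\tau_{r,\xi_1}\tau_{r,\xi_2} = \tau_{r,\xi_1\xi_2}$ makes $\{\xi\in\mathcal{O}_F^\times : \tau_{r,\xi} \in U(\Lambda)\}$ a subgroup of $\mathcal{O}_F^\times$, so we may select any convenient generator. When $d \equiv 2,3 \pmod 4$ (so $\mathcal{O}_F = \Z[\sqrt{d}]$ and $\Tr(\mathcal{O}_F) = 2\Z$), I would take $\xi = -1$, obtained directly if $d \ne -1$ and via $\tau_{r,i}^2 = \tau_{r,-1}$ when $d = -1$. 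The hypothesis then yields $\frac{2\langle \ell, r\rangle}{\langle r,r\rangle} \in \mathcal{O}_F$ for all $\ell \in \Lambda$, and applying $\Tr$ together with $(r,r)_Q = 2\langle r,r\rangle$ gives $\frac{2(\ell, r)_Q}{(r,r)_Q} \in \Z$, i.e., $\sigma_r \in O(\Lambda_Q)$. When $d = -3$, I would use $\xi = \omega$ in the subgroup and exploit the identity $(1-\omega)(1-\bar\omega) = 3$, together with the direct computation $\Tr((1-\bar\omega)\beta) \in 3\Z$ for all $\beta \in \mathcal{O}_F$; the factor of $3$ then plays the role of the factor of $2$ in the previous case, again yielding $\frac{2(\ell,r)_Q}{(r,r)_Q}\in\Z$. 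In either case, $\pm \sigma_r \in O^+(\Lambda_Q)$ by choosing the sign preserving the positive cone, so $r_0 = r$ works.

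The main obstacle is this final case analysis: the restriction $d \equiv 2,3 \pmod 4$ or $d=-3$ in the hypothesis is precisely what makes the arithmetic of $\mathcal{O}_F^\times$ and the trace map compatible with the transfer from Hermitian integrality (modulo $\mathcal{O}_F$) to orthogonal integrality (modulo $\Z$), through either the factor $2$ in $\Tr(\Z[\sqrt{d}])$ or the factor $3$ coming from the sixth roots of unity in $\Z[\omega]$. The most delicate subcase is $d=-3$ with the reflection subgroup at $r$ generated only by $\tau_{r,-1}$ (not $\tau_{r,\omega}$); there one must verify that $\sigma_r$ nonetheless acts integrally on $\Lambda_Q$, for which I would invoke the $\delta\mathcal{O}_F$-valued-ness of the Hermitian form and the resulting divisibility constraints on $\langle \ell, r\rangle$, possibly after replacing $r_0$ by a suitable $\mathcal{O}_F$-multiple of $r$ in $\mathcal{O}_F r\subset \Lambda_Q$.
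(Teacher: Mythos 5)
Your overall route is the same as the paper's: reduce membership of the quasi-reflection in $U(\Lambda)$ to an integrality statement for $(1-\xi)\langle\ell,r\rangle/\langle r,r\rangle$, transfer it through $(\ ,\ )=\Tr_{F/\Q}\langle\ ,\ \rangle$ and $(r,r)=2\langle r,r\rangle$ to get $2(\ell,r)/(r,r)\in\Z$, and split into cases according to $\OO_F^{\times}$. Your treatment of $d\equiv 2,3\bmod 4$ is correct and even slightly cleaner than the paper's: the observation that the admissible $\xi$ form a subgroup of $\OO_F^{\times}$, together with $\tau_{r,\sqrt{-1}}^{2}=\tau_{r,-1}$, lets you reduce $d=-1$ to the $\xi=-1$ computation, whereas the paper handles $(1\pm\sqrt{-1})\langle\ell,r\rangle/\langle r,r\rangle\in\OO_F$ by a separate direct calculation. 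Likewise your identity $\Tr_{F/\Q}((1-\bar\omega)\beta)\in 3\Z$ correctly settles $d=-3$ whenever the subgroup of admissible $\xi$ contains an element of order $3$, which matches the paper's cases $(1\pm\omega)$ and $(1\pm\omega^{2})$.

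The gap is precisely the subcase you defer: $d=-3$ when only $\tau_{r,-1}\in U(\Lambda)$, so the hypothesis is merely $2\langle\ell,r\rangle/\langle r,r\rangle\in\OO_F$. Appealing to ``divisibility constraints from the $\delta\OO_F$-valuedness'' will not close this, because $\Tr_{F/\Q}(\OO_{\Q(\sqrt{-3})})=\Z$ rather than $2\Z$, so the naive transfer only yields $2(\ell,r)/(r,r)\in\tfrac{1}{2}\Z$ and $\sigma_r$ genuinely need not lie in $O(\Lambda_Q)$. The paper's resolution is the one you gesture at but must actually carry out: since $H(r)=H(\omega r)=H(\omega^{2}r)$ while $r,\omega r,\omega^{2}r$ give three distinct $\Z$-lines in $\Lambda_Q$, one writes $2\langle\ell,r\rangle/\langle r,r\rangle=a+\omega b$ and computes the three orthogonal reflection coefficients $a-\tfrac{b}{2}$, $-\tfrac{a}{2}+b$, $-\tfrac{a+b}{2}$ for $r$, $\omega r$, $\omega^{2}r$; for every parity class of $(a,b)$ at least one of these is an integer, and $\iota(H(r))\subset\mathcal{H}(\omega^{j}r)$ for each $j$, which is how the paper concludes. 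Be aware that to finish you must also ensure that a single choice among $r,\omega r,\omega^{2}r$ works for \emph{all} $\ell\in\Lambda$ simultaneously (the parities of $(a,b)$ depend on $\ell$), e.g.\ by analyzing the $\OO_F$-ideal $\{2\langle\ell,r\rangle/\langle r,r\rangle\mid\ell\in\Lambda\}$ modulo $2$; your proposal contains no argument for this step.
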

\begin{proof}
For $F\neq\Q(\sqrt{-1})$, $\Q(\sqrt{-3})$, it suffices to show that if 
  \[\frac{2\l\ell,r\r}{\l r,r\r}\in\OO_F,\]
  then 
  \[\alpha:=\frac{2(\ell,r)}{(r,r)}=\frac{2\Tr_{F/_Q}\l\ell,r\r}{\Tr_{F/\Q}\l r,r\r}\in\Z.\]
      Since $\l r,r\r\in\Q$, we have 
  \[\alpha=\Re\frac{2\l\ell,r\r}{\l r,r\r}.\]
  Hence for $d\equiv 2,3\bmod 4$ with $d\neq -1$, this concludes lemma.
  
  For $F=\Q(\sqrt{-1})$, it needs to show that if
   \[(1-\sqrt{-1})\frac{\l\ell,r\r}{\l r,r\r}\in\OO_F\ \mathrm{or}\ (1+\sqrt{-1})\frac{\l\ell,r\r}{\l r,r\r}\in\OO_F,\]
  then $\alpha\in\Z$.
  In the following, let $a$, $b$ be rational integers.
  First, we assume 
  \[(1-\sqrt{-1})\frac{\l\ell,r\r}{\l r,r\r}=a+\sqrt{-1}b\in\OO_F.\]
  Then $\alpha=a-b\in\Z$.
  Second, we assume 
  \[(1+\sqrt{-1})\frac{\l\ell,r\r}{\l r,r\r}=a+\sqrt{-1}b\in\OO_F.\]
  Then $\alpha=a+b\in\Z$.
  This concludes lemma for $F=\Q(\sqrt{-1})$.
  
    For $F=\Q(\sqrt{-3})$, assume that one of the following holds.
   \begin{align}
   \label{11}
       (1\pm\omega)\frac{\l\ell,r\r}{\l r,r\r}&\in\OO_F,\\
       \label{22}
       (1\pm\omega^2)\frac{\l\ell,r\r}{\l r,r\r}&\in\OO_F,\\
       \label{33}
       2\frac{\l\ell,r\r}{\l r,r\r}&\in\OO_F.
   \end{align}
Through some simple computation, when (\ref{11}) or (\ref{22}) hold, then we have $\alpha\in\Z$.
  Finally, we assume (\ref{33}).
 Let
  \[\alpha=\alpha_1=\frac{2(\ell,r)}{(r,r)}=a-\frac{b}{2},\]
  \[\alpha_2=\frac{2(\ell,\omega r)}{(\omega r,\omega r)}=-\frac{a}{2}+b,\]
  \[\alpha_3=\frac{2(\ell,\omega^2 r)}{(\omega^2 r,\omega^2 r)}=-\frac{a+b}{2}.\]
Hence, the assumption $a+\omega b\in\OO_F$ implies one of $\alpha_i$ for $i=1,2,3$ is an element of $\Z$.
On the other hand, we have $H(r)=H(\omega r)=H(\omega^2 r)$ and $\iota(H(r))\subset\mathcal{H}(r)$, thus this concludes lemma for $F=\Q(\sqrt{-3})$.
  \end{proof}

For the computation of multiplicities of 
unitary modular forms later, we need the 
following converse to \cite[Remark after 6.1]{Hofmann}. 
\begin{Lem}\label{res}
Let $r\in \Lambda$ be a primitive element with $\l r,r\r<0$.
\begin{enumerate}
    \item \label{1}
The special divisor $H(r)$ is contained in 
exactly $\frac{\# \OO_F^{\times}}{2}$ 
special divisors of the form 
$\mathcal{H}(r')\subset \mathcal{D}_{\Lambda_Q}$ for some primitive $r'\in\Lambda_Q$. 
\item \label{reduced}
The restriction of the special divisor $\mathcal{H}_r|_{D_{\Lambda}}$ 
is $H(r)$ with multiplicity $1$ i.e., 
reduced. 
\end{enumerate}
\end{Lem}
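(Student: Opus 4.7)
The plan is to translate both assertions into linear algebra on a maximally isotropic subspace of $\Lambda_Q \otimes_\Z \C$ coming from the $\OO_F$-action. Let $J$ denote the complex structure on $V := \Lambda \otimes_\Z \R$ induced by the $\OO_F$-action (equivalently, multiplication by $i \in F \otimes_\Q \R \cong \C$), and extend $J$ $\C$-linearly to $\Lambda_Q \otimes_\Z \C$; this produces a decomposition $\Lambda_Q \otimes_\Z \C = L_+ \oplus L_-$ into the $\pm i$-eigenspaces, each of complex dimension $n+1$. Since $J$ preserves $(\cdot,\cdot)_\Q$ with $J^2 = -1$, for $z, w \in L_+$ one has $(z,w) = (Jz,Jw) = (iz,iw) = -(z,w)$, so $L_+$ is totally, hence maximally, isotropic, and $L_+^{\perp} = L_+$. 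The Harish-Chandra embedding $\iota \colon D_\Lambda \hookrightarrow \D_{\Lambda_Q}$ is realized by $v \mapsto v - iJv$, which $\C$-linearly identifies $(V, J)$ with $L_+$. The crucial pullback identity
\[
(v - iJv,\, s)_\C \;=\; 2\,\langle v,\, s\rangle \qquad (v \in V,\ s \in \Lambda)
\]
follows by splitting $\langle v, s\rangle \in F \otimes \R$ into real and imaginary parts.

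For part (ii), the special divisor $\mathcal{H}(r) \subset \D_{\Lambda_Q}$ is cut out by the $\C$-linear form $(\cdot, r)_\C$ on $\Lambda_Q \otimes \C$; by the pullback identity, its restriction to $L_+ \supset \iota(D_\Lambda)$ is the nonzero $\C$-linear functional $2\langle\cdot, r\rangle$, whose zero locus in $D_\Lambda$ is precisely $H(r)$. Since a nonzero linear functional defines a reduced Cartier divisor, $\iota^\ast \mathcal{H}(r) = H(r)$ with multiplicity one.

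For part (i), each unit $\xi \in \OO_F^\times$ yields an $\OO_F$-primitive $\xi r \in \Lambda_Q$ with $\mathcal{H}(\xi r) \supset H(r)$, since $\langle v, \xi r\rangle = \bar\xi \langle v, r\rangle$ vanishes on $H(r)$; moreover $\mathcal{H}(\xi_1 r) = \mathcal{H}(\xi_2 r)$ iff the real vectors $\xi_1 r, \xi_2 r$ are $\R$-proportional, iff $\xi_1/\xi_2 \in \OO_F^{\times} \cap \R = \{\pm 1\}$, giving $\#\OO_F^\times / 2$ distinct such divisors. Conversely, suppose $\mathcal{H}(r') \supset H(r)$ for a primitive $r'$. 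The pullback $2\langle\cdot, r'\rangle$ vanishes on $H(r) = \{\langle\cdot, r\rangle = 0\}$; the identically-zero alternative would place $r' \in L_+^{\perp} = L_+$, but $L_+ \cap V = 0$ (since $Jv = iv$ for real $v$ forces $v = 0$), contradicting primitivity. Hence $\langle\cdot, r'\rangle = \lambda \langle\cdot, r\rangle$ for some $\lambda \in \C^{\times}$, giving $r' = \bar\lambda r$ by nondegeneracy of the Hermitian pairing; the $\OO_F$-primitivity of $r$ and $r'$ then forces $\bar\lambda \in \OO_F^\times$, completing the enumeration. The key technical step is the pullback identity; with it in hand, both parts reduce to routine linear algebra on the isotropic subspace $L_+$.
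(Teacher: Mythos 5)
Your proof is correct and takes essentially the same route as the paper's: for part (ii) both arguments come down to the fact that $H(r)$ is cut out on the linear subspace $\mathbb{P}(L_+)\subset Q^n$ by the restriction of the defining linear form of $\mathcal{H}(r)$, which is the nonzero functional $2\langle \cdot,r\rangle$ and hence defines a reduced divisor, and for part (i) both reduce the count to the observation that $\mathcal{H}(r')$ cuts out $H(r)$ exactly when $\C r'=\C r$, with unit multiples of $r$ counted modulo $\mathcal{H}(r')=\mathcal{H}(-r')$. The pullback identity $(v-iJv,s)=2\langle v,s\rangle$ that you make explicit is left implicit in the paper, and your converse step in (i) inherits the same mild imprecision as the paper's own proof (``primitive $r'\in\Lambda_Q$'' is $\Z$-primitivity, which by itself does not force $\bar\lambda\in\OO_F^{\times}$; this is harmless since the lemma is only ever applied to $r'$ of the same fixed norm as $r$).
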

\begin{proof}
We fix $\sqrt{d}\in \C$ and the  
corresponding embedding 
$F\hookrightarrow \C$. 
First, we prove \eqref{1}. 
Note $\mathcal{H}(r)|_{D_\Lambda}
=\mathcal{H}(r')|_{D_\Lambda}$ 
if and only if 
$\C r'=\C r$ for $r,r'\in\Lambda$. 
This implies $r=ar'$ for some $a\in\C^{\times}$.
Since $r$ is primitive, we have $a\in\OO_F^{\times}$.
On the other hand, as $\mathcal{H}(r')$ only depends on 
$\R r'$ so that $\mathcal{H}(r')
=\mathcal{H}(-r')$, 
the number we concern is 
$\frac{\# \OO_F^*}{2}$. 

The proof of \eqref{reduced} is as follows. 
Since $\l r,r\r<0$, $\mathcal{H}(r)$ 
is again an orthogonal symmetric domain 
which is an (analytic) 
open subset of a quadric 
hypersurface, say $Q^{n-1}\subset 
Q^n\subset \mathbb{P}^{n+1}$. 
Thus the 
restriction of the 
Cartier divisor $r=0$ to $Q^n$ 
is reduced and $\mathcal{H}(r)$ 
is its open subset. 
$H(r)$ is also an open subset 
of the restriction of $r=0$ to 
the linear subspace, which is also 
clearly reduced. Hence the 
assertion follows. 
\end{proof}

%%%%%%%%%%%%%%%%%%%%%%%%%%%%%%%%%%%%%%%%%%%%%%%%%%%%%%%%

\subsection{Unramifiedness of unitary modular varieties}
\label{rarely_ramification_divisors}
\begin{Thm}
\label{thm:rare}
Let $F=\Q(\sqrt{d})\ (d\neq -1)$ 
be an imaginary quadratic field and  $\Lambda$ be a Hermitian unimodular  lattice over $\OO_F$ of signature $(1,n)$ for $n>1$.
We assume $d\equiv 2,3\bmod 4$. 
Then for any arithmetic subgroup $\Gamma\subset U(\Lambda)$, the canonical map $\pi_{\Gamma}\colon D_{\Lambda}\to\Gamma\backslash D_{\Lambda}$ does not ramify in codimension 1, 
so that $\overline{X}^{\rm SBB}$ 
is a log canonical model. 
\end{Thm}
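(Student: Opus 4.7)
The plan is to show that $U(\Lambda)$ itself contains no element which acts on $D_\Lambda$ as a quasi-reflection; since $\Gamma\subset U(\Lambda)$, this forces the branch divisor of $\pi_\Gamma$ to be empty in codimension one, and the log canonical model claim then follows immediately from the argument used to prove Theorem \ref{Fanoness}, specialized to the case where all $B_i=0$: $K_{\overline{X}^{\rm SBB}}$ is $\mathbb{Q}$-proportional to the ample automorphic bundle $L$ via \eqref{aut.K} (with every $d_i$-term absent), and $\overline{X}^{\rm SBB}$ is log canonical by the Mumford/Baily--Borel description.

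First I would narrow the candidates. Under $d\neq -1$ and $d\equiv 2,3\pmod 4$ (which automatically excludes $d=-3$), the unit group $\mathcal{O}_F^{\times}=\{\pm 1\}$, so the only admissible $\xi\in\mathcal{O}_F^{\times}\setminus\{1\}$ is $\xi=-1$, yielding the ordinary reflection $\tau_{r,-1}(\ell)=\ell-2\langle\ell,r\rangle\langle r,r\rangle^{-1}r$. Because $-\mathrm{id}\in U(\Lambda)$ always, the test ``$\eta\tau_{r,-1}\in U(\Lambda)$ for some $\eta\in\{\pm 1\}$'' collapses to $\tau_{r,-1}\in U(\Lambda)$, i.e., to the divisibility $\frac{2\langle\ell,r\rangle}{\langle r,r\rangle}\in\mathcal{O}_F$ for every $\ell\in\Lambda$.

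To turn this into a numerical obstruction I would invoke Proposition \ref{divisor_scaler_unitary} in the unimodular case $b=1$: for any primitive $r\in\Lambda$, the ideal $\{\langle\ell,r\rangle:\ell\in\Lambda\}$ is exactly $\delta\mathcal{O}_F$. So the condition collapses to the single divisibility $\frac{2\delta}{\langle r,r\rangle}\in\mathcal{O}_F$. Substituting $\delta=\frac{1}{2\sqrt{d}}$ and $\mathcal{O}_F=\mathbb{Z}[\sqrt{d}]$, this rewrites as $\frac{\sqrt{d}}{d\langle r,r\rangle}\in\mathbb{Z}[\sqrt{d}]$, a purely imaginary element of $F$, which forces $\frac{1}{d\langle r,r\rangle}\in\mathbb{Z}$. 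Together with $d<0$ and $\langle r,r\rangle<0$, the only possibility is $d\langle r,r\rangle=1$, i.e., $\langle r,r\rangle=1/d$. Since the Hermitian form is $\delta\mathcal{O}_F$-valued and $\delta\mathcal{O}_F\cap\mathbb{R}=\frac{1}{2}\mathbb{Z}$, we would need $1/d\in\frac{1}{2}\mathbb{Z}$, which under $|d|\ge 2$ is only possible at the borderline $d=-2$.

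The main obstacle is exactly this case $d=-2$, which I would dispose of either by invoking the evenness that is standard for unimodular Hermitian lattices in this setting (so $\langle r,r\rangle\in\mathbb{Z}$ and $\langle r,r\rangle=-1/2$ is outright impossible), or by a direct structural inspection showing that a signature $(1,n)$ unimodular $\mathbb{Z}[\sqrt{-2}]$-Hermitian lattice carries no primitive vector of square $-1/2$. Everything else --- isolating the unique possible quasi-reflection, extracting the divisibility from Proposition \ref{divisor_scaler_unitary}, and passing from emptiness of the branch locus to the log canonical model conclusion --- is essentially formal bookkeeping on material already laid down in \S \ref{prepare.Hlattice} and the proof of Theorem \ref{Fanoness}.
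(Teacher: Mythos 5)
Your proposal follows essentially the same route as the paper's proof: reduce to $\xi=-1$ using $\OO_F^{\times}=\{\pm 1\}$ (valid since $d\neq -1$ and $d\equiv 2,3\bmod 4$ excludes $d=-3$), absorb the ambient unit $\eta$ using $-\mathrm{id}\in U(\Lambda)$, invoke Proposition \ref{divisor_scaler_unitary} together with unimodularity to produce $\ell$ with $\langle\ell,r\rangle=\delta=\frac{1}{2\sqrt{d}}$, turn $\tau_{r,-1}\in U(\Lambda)$ into the divisibility $\frac{2\delta}{\langle r,r\rangle}\in\OO_F$, and deduce the log canonical model statement from \cite{Mum77} or Theorem \ref{Fanoness} (iii) with all $B_i$ absent. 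The one point where you genuinely depart from the paper is instructive: you observe, correctly, that the divisibility obstruction by itself does not exclude $d=-2$ with $\langle r,r\rangle=-\frac{1}{2}$ (there $\frac{2\delta}{\langle r,r\rangle}=\sqrt{-2}\in\OO_{\Q(\sqrt{-2})}$), whereas the paper asserts $-\frac{1-\xi}{b\sqrt{d}}\notin\OO_F$ uniformly and thereby passes silently over exactly the case $b=1$, $d=-2$; on this point you are more careful than the source. However, neither of your two proposed ways of closing that case is complete as the theorem is stated: evenness is not a hypothesis of Theorem \ref{thm:rare} (the paper treats ``even'' as an additional property a unimodular lattice may or may not have), and the ``direct structural inspection'' showing a unimodular $\Z[\sqrt{-2}]$-Hermitian lattice of signature $(1,n)$ has no primitive vector of norm $-\frac{1}{2}$ is promised but not carried out. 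You should either supply that lattice-theoretic argument or record that the statement implicitly requires evenness (as all of the paper's $d=-2$ examples satisfy). A cosmetic remark: from $\frac{1}{d\langle r,r\rangle}\in\Z$ one gets $d\langle r,r\rangle=\frac{1}{m}$ for some positive integer $m$, not necessarily $d\langle r,r\rangle=1$; combined with $\langle r,r\rangle\in\frac{1}{2}\Z$ this still lands only on $d=-2$, $\langle r,r\rangle=-\frac{1}{2}$, so your conclusion is unaffected.
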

\begin{proof}
It suffices to show the claim for $\Gamma= U(\Lambda)$.
The ramification divisors are defined by $\tau_{r,\xi}$ for some primitive $r\in\Lambda$ and $\xi\in\OO_F^{\times}\backslash\{1\}$ and by Lemma \ref{ramification_pullback}, they are included in the set
\[
  \bigcup_{r\in\Lambda,b\in 
  \Z,\xi\in\OO_F^{\times}\backslash\{1\}}\bigcup_{\substack{r\in \Lambda/\OO_F^{\times}\\ \langle r,r\rangle 
  =-\frac{b}{2},\  \tau_{r,\xi}\in U(\Lambda)}}H(r).\\
\]
Now 
\[\tau_{r,\xi}(\ell)=\ell-(1-\xi)\frac{\langle\ell,r\rangle}{\langle r,r \rangle}r.\]
We assume that $r\in\Lambda$ is a reflective element, that is, $\tau_{r,\xi}\in U(\Lambda)$ for some $\xi\in\OO_F^{\times}\backslash\{1\}$.
Then
\[(1-\xi)\frac{\langle\ell,r\rangle}{\langle r,r\rangle}=-\frac{2(1-\xi)\langle\ell,r\rangle}{b}.\]
Since $r$ is primitive and $\Lambda$ is unimodular, by Proposition \ref{divisor_scaler_unitary}, there exists an $\ell\in\Lambda$ such that $\langle\ell,r\rangle=\frac{1}{2\sqrt{d}}$, so we have
\[(1-\xi)\frac{\langle\ell,r\rangle}{\langle r,r\rangle}=-\frac{1-\xi}{b\sqrt{d}}\not\in\OO_F\]
for $F\neq\Q(\sqrt{-1}), \Q(\sqrt{-3})$.
This implies $\tau_{r,\xi}\not\in U(\Lambda)$ and this is contradiction.
The last assertion then follows from 
\cite{Mum77} (or as a special case of  our Theorem \ref{Fanoness} (iii)). 
\end{proof}

Note that we can also deduce this result from \cite[Lemma 2.2]{WW2}.

\begin{Cor}
Let $F=\Q(\sqrt{d})$ $(d\neq-1)$ be an imaginary quadratic field and $(\Lambda,\l\ ,\ \rangle) =M(b)$ be a Hermitian lattice over $\OO_F$ of signature $(1,n)$ for $n>1$ where $M$ is a unimodular Hermitian lattice and $b\in\OO_F$.
We assume $d\equiv 2,3\bmod 4$, and $\frac{b}{\sqrt{d}}\not\in\OO_F$. 
Then for any arithmetic subgroup $\Gamma\subset U(\Lambda)$, the canonical map $\pi_{\Gamma}\colon D_{\Lambda}\to\Gamma\backslash D_{\Lambda}$ does not ramify in codimension 1.
\end{Cor}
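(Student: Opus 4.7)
The argument is a direct generalization of the proof of Theorem \ref{thm:rare}, with the scaling factor $b$ carefully inserted into the key integrality computation.

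As in Theorem \ref{thm:rare}, I would first reduce to the case $\Gamma = U(\Lambda)$: since any arithmetic subgroup's ramification locus is contained in that of the full unitary group, it suffices to show that no primitive $r \in \Lambda$ with $\l r, r\r < 0$ gives rise to a quasi-reflection $\tau_{r,\xi} \in U(\Lambda)$ for any $\xi \in \OO_F^\times \setminus \{1\}$.

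The key input is Proposition \ref{divisor_scaler_unitary}: because $\Lambda = M(b)$ with $M$ unimodular, for every primitive $r \in \Lambda$ the ideal $\{\l \ell, r\r : \ell \in \Lambda\}$ equals $b\delta\OO_F = \frac{b}{2\sqrt{d}}\OO_F$ (using $d \equiv 2, 3 \bmod 4$). Picking $\ell \in \Lambda$ realizing this ideal, so that $\l \ell, r\r = \frac{b}{2\sqrt{d}}$, and following the calculation of Theorem \ref{thm:rare}, the condition that $\tau_{r,\xi}$ preserves $\Lambda$ becomes
\[
(1-\xi)\frac{\l \ell, r\r}{\l r, r\r} \;=\; \frac{(1-\xi)\,b}{2\sqrt{d}\,\l r, r\r}\;\in\; \OO_F.
\]
Since $d\ne -1$ and $d\equiv 2,3\bmod 4$ rule out both $\Q(\sqrt{-1})$ and $\Q(\sqrt{-3})$, one has $\OO_F^\times = \{\pm 1\}$ and the only nontrivial choice is $\xi = -1$, so $1-\xi = 2$. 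Writing $\l r, r\r$ as a half-integer (the only real values available in $\delta\OO_F$) and clearing that rational denominator, the condition simplifies to an integer multiple of $\frac{b}{\sqrt{d}}$ lying in $\OO_F$, contradicting the hypothesis $\frac{b}{\sqrt{d}} \not\in \OO_F$.

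The main obstacle is the last step: one must show that the integer factor coming from $\l r, r\r$ cannot accidentally absorb the irrationality of $\frac{b}{\sqrt{d}}$ modulo $\OO_F$. This is precisely the denominator-tracking performed in the proof of Theorem \ref{thm:rare} for the unimodular case $b = 1$; inserting a factor of $b$ throughout preserves the structure, and the hypothesis $\frac{b}{\sqrt{d}} \not\in \OO_F$ is exactly calibrated to reproduce that contradiction at the end. Once no primitive reflective vector exists in $U(\Lambda)$, the conclusion that $\pi_{\Gamma}$ does not ramify in codimension $1$ is immediate.
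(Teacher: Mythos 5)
Your plan coincides with the paper's (implicit) argument: the corollary is stated there with no separate proof precisely because the computation of Theorem \ref{thm:rare} goes through verbatim once Proposition \ref{divisor_scaler_unitary} supplies a vector $\ell$ with $\langle \ell,r\rangle=\frac{b}{2\sqrt{d}}$ and the hypotheses $d\equiv 2,3\bmod 4$, $d\neq -1$ force $\OO_F^\times=\{\pm 1\}$, hence $\xi=-1$ and $1-\xi=2$. The ``denominator absorption'' you single out as the main obstacle is in fact immediate rather than delicate: since $\langle r,r\rangle$ is a nonzero rational integer, if $\frac{2\langle\ell,r\rangle}{\langle r,r\rangle}=\frac{b}{\sqrt{d}\,\langle r,r\rangle}$ lay in $\OO_F$ then so would $\langle r,r\rangle\cdot\frac{b}{\sqrt{d}\,\langle r,r\rangle}=\frac{b}{\sqrt{d}}$, contradicting the hypothesis directly, with no further tracking of the factor coming from $\langle r,r\rangle$ needed.
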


\begin{comment}
\begin{Prob}
For any Hermitian lattice $\Lambda$, if there exists a decomposition \[\Lambda=\oplus \Lambda_i(a_i)\]
where $\Lambda_i$ is a unimodular Hermitian lattice and $a_i\in\OO_F$, then we can say more about the ramification divisors on $D_{\Lambda}$.
\end{Prob}
\end{comment}

%\begin{Rem}
%Similar decomposition is known as %Jordan decomposition over %$\Z_p$-lattices.
%\end{Rem}

\begin{comment}
\begin{Prob}
By the above theorems, only by constructing low-weight cusp forms, we can show certain unitary Shimura varieties are general-type.
\end{Prob}
\end{comment}

%%%%%%%%%%%%%%%%%%%%%%%%%%%%%%%%%%%%%%%%%%%%%%%%%%%%%%

\subsection{Unitary modular cases, Part I - Fano cases}
Below, for the definition of Hermitian lattices; see Appendix \ref{appendix}.
\begin{Rem}
\label{unitary_a}
We can estimate the value $s(X)$ as orthogonal modular varieties and use it to determine the birational types of ball quotients.
Note that the ramification degrees arising from unitary cases may differ from orthogonal ones \cite{Behrens}, so we have to pay attention to the computation of $a$; compare with Remark \ref{rem:orthog_a}.

For $F=\Q(\sqrt{-1})$, let $B_2$ (resp. $B_4$) be a union of ramification divisor with ramification degree $2$ (resp. $4$).
If a modular form $f$ of weight $k$ vanishes on $B_2$ (resp. $B_4$) with order $2m$ (resp. $3m$) for some $m\in\Z_{>0}$, then $f$ satisfies Assumption \ref{Ass1} (i) and $s(X)=\frac{k}{4mn}$.
\end{Rem}

\vspace{3mm}
\begin{Ex}
\label{Unimodular case II_unitary}
For $F=\Q(\sqrt{-1})$, let $\Lambda:= \Lambda_{U\oplus U}\oplus \Lambda_{E_8(-1)}$ be an even unimodular  Hermitian lattice over $\OO_{\Q(\sqrt{-1})}$ of signature $(1,5)$ whose associated quadratic lattice is $\Lambda_Q=U\oplus U\oplus E_8(-1)$.

The only ramification divisors of the map $D_\Lambda\to X:= U(\Lambda)\backslash D_\Lambda$
are 
\begin{align*}
    \bigcup_{\substack{r\in \Lambda/\OO_F^{\times}\mathrm{:primitive}\\ \langle r,r\rangle=-1}}H(r)
\end{align*}
with ramification degree 2.
For more details, see  Example \ref{Unimodular case I_unitary}. 

By Example \ref{Unimodular case II}, $f:=\Phi_{252}\vert_{D_\Lambda}$ is a weight 252 modular form with 
\[{\rm div} f=2\sum_{\substack{r\in L/\OO_{\Q(\sqrt{-1})}^{\times}\mathrm{:primitive}\\ \langle r,r\rangle=-1}}H(r)\]
whose coefficient comes  from  Lemma \ref{res}.
%Now $f^c$ has a trivial character for some %$c\in\Z_{>0}$. 
Therefore applying Theorem \ref{Fanoness} (i) for $f^{12}$ with $s(X)=\frac{21}{2}$, we have the following.
\begin{Cor}
The Satake-Baily-Borel compactification $\overline{X}^{\mathrm{SBB}}$ of the Shimura variety $X:=U(\Lambda)\backslash D_{\Lambda}$ is
a Fano variety, where $\Lambda:= \Lambda_{U\oplus U}\oplus \Lambda_{E_8(-1)}$ for $F=\Q(\sqrt{-1})$. 
\end{Cor}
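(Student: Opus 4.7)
The plan is to apply Theorem \ref{Fanoness} (i) to a power of the Borcherds form $\Phi_{252}$ of Example \ref{Unimodular case II}, pulled back from the orthogonal modular variety via the natural embedding $\iota\colon D_\Lambda\hookrightarrow \mathcal{D}_{\Lambda_Q}$. Since $\Lambda_Q = U\oplus U\oplus E_8(-1)$ is exactly the lattice treated in Example \ref{Unimodular case II}, $\Phi_{252}$ vanishes on $\mathcal{H}_{-2}\subset \mathcal{D}_{\Lambda_Q}$ with multiplicity one. The first step is to check that the restriction $f:=\Phi_{252}|_{D_\Lambda}$ is a nonzero (hence nonvanishing outside a divisor) holomorphic modular form of weight $252$ on $D_\Lambda$, and to identify its vanishing divisor precisely.

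For the divisor computation, I would take a primitive $r\in \Lambda$ with $\langle r,r\rangle=-1$; then $(r,r)=\Tr_{F/\Q}\langle r,r\rangle=-2$, so $\mathcal{H}(r)\subset \mathcal{H}_{-2}$. By Lemma \ref{res} (i), each such $H(r)\subset D_\Lambda$ is contained in exactly $\frac{\#\OO_F^\times}{2}=\frac{4}{2}=2$ special divisors of $\mathcal{D}_{\Lambda_Q}$ cut out by $(-2)$-vectors (namely $\mathcal{H}(r)$ and $\mathcal{H}(\sqrt{-1}\,r)$), and by Lemma \ref{res} (ii) each of those restricts to $H(r)$ reducibly. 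Hence $f$ vanishes along $H(r)$ with multiplicity exactly $2$, so
\[
\mathrm{div}(f)= 2\!\!\!\sum_{\substack{r\in\Lambda/\OO_F^\times\mathrm{\,:\,primitive}\\ \langle r,r\rangle=-1}}\!\!\!H(r).
\]
Combined with the description (recalled from Example \ref{Unimodular case I_unitary}) that the branch locus of $D_\Lambda\to X$ is exactly this union, with ramification degree $d=2$, the divisor $\mathrm{div}(f)$ coincides with twice the (reduced) branch divisor.

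Next I would convert this into the framework of Assumption \ref{Ass1} (i). Since $n=5$ and the canonical weight in the unitary case is $c=n+1=6$ by Lemma \ref{can.wt}, the weight $252$ form $f$ corresponds to a section of $L^{\otimes 42}$; its vanishing order $2$ along each branch divisor matches $\frac{N(d-1)}{d}=\frac{N}{2}$ with $N=4$ (or, choosing $N=48$ as in the text via the power $f^{12}$ to guarantee trivial character and integrality margin), giving
\[
s(X)=\frac{aN}{N}=\frac{42}{4}=\frac{21}{2}.
\]
This can also be read off directly from Remark \ref{unitary_a}: with $k=252$, $2m=2$, $n=5$ one gets $s(X)=\frac{k}{4mn}=\frac{252}{20}=\frac{21}{2}$.

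Finally, since $s(X)=\tfrac{21}{2}>1$, Theorem \ref{Fanoness} (i) yields that $\overline{X}^{\mathrm{SBB}}$ has only log canonical singularities and is a Fano variety, completing the proof. The only nonroutine step is the multiplicity-$2$ computation for the restriction of $\Phi_{252}$; that is precisely what Lemma \ref{res} is designed to handle, so the main obstacle has already been overcome in \S \ref{prepare.Hlattice}, and everything else is bookkeeping of weights and ramification degrees.
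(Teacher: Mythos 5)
Your proposal is correct and follows essentially the same route as the paper: restrict $\Phi_{252}$ to $D_\Lambda$, use Lemma \ref{res} to obtain the multiplicity-$2$ divisor along the $(-1)$-special divisors, identify this with the degree-$2$ ramification locus determined as in Example \ref{Unimodular case I_unitary}, and apply Theorem \ref{Fanoness} (i) with $s(X)=\frac{21}{2}>1$. The only blemish is your cross-check via Remark \ref{unitary_a}, where $\frac{252}{4\cdot 1\cdot 5}=\frac{63}{5}$ rather than $\frac{21}{2}$ (the remark's stated formula uses $n$ where the worked examples effectively use the canonical weight $n+1$); this is harmless, since your primary derivation of $s(X)$ is the one the paper uses and $s(X)>1$ either way.
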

\end{Ex}

\vspace{3mm}
\begin{Ex}
\label{Example 2.3 in Odaka_unitary}
For $F=\Q(\sqrt{-1})$, let $\Lambda:= \Lambda_{U\oplus U(2)}\oplus \Lambda_{E_8(-1)}(2)$ be an even Hermitian lattice over $\OO_{\Q(\sqrt{-1})}$ of signature $(1,5)$ whose associated quadratic lattice is $\Lambda_Q=U\oplus U(2)\oplus E_8(-2)$.
The ramification divisors on $\D_{\Lambda_Q}$ with respect to $O^+(\Lambda_Q)$ is the union of special divisors with respect to $(-2)$-vectors and special-even $(-4)$-vectors, so the ramification divisors on $D_\Lambda$ with respect to $ U(\Lambda)$ are included in the union of special divisors with respect to $(-1)$-vectors and special-even $(-2)$-vectors  since $\langle v,v\rangle$ is real for all $v\in\Lambda$.
Here we say a vector $r\in\Lambda$ is special-even if $\Re\langle r,v\rangle\in\Z$ for any $v\in\Lambda$.
The only ramification divisors of $\pi$ are 
\[\bigcup_{\substack{r\in L/\OO_{\Q(\sqrt{-1})}^{\times}\mathrm{:primitive}\\ \langle r,r\rangle=-1,\  \tau_{r,-1}\in U(\Lambda)}}H(r)\quad \cup \bigcup_{\substack{r\in L/\OO_{\Q(\sqrt{-1})}^{\times}\textrm{:special-even,\  primitive}\\ \langle r,r\rangle=-2,\  \tau_{r,-1}\in U(\Lambda)}}H(r)\]
with ramification degree $d_{i}=2$ and 
    \[\bigcup_{\substack{r\in \Lambda/\OO_{\Q(\sqrt{-1})}^{\times}\mathrm{:primitive}\\ \langle r,r\rangle=-1,\  \tau_{r,\sqrt{-1}}\in U(\Lambda)}}H(r)\quad\cup\bigcup_{\substack{r\in \Lambda/\OO_{\Q(\sqrt{-1})}^{\times}\textrm{:special-even,\ primitive}\\ \langle r,r\rangle=-2,\  \tau_{r,\sqrt{-1}}\in U(\Lambda)}}H(r).
\]
with ramification degree $d_{i}=4$.
For any primitive element $r\in \Lambda$ with $\langle r,r\rangle=-1$, we have
\[\tau_{r,-1}(\ell)=\ell+2\langle \ell,r\rangle r.\]
By the description of Hermitian lattices $\Lambda_{U\oplus U(2)}$ and $\Lambda_{E_{8(-1)}(2)}$,  
\[2\l\ell,r\rangle\in\OO_{\Q(\sqrt{-1})}.\]
Hence $\tau_{r,-1}\in U(\Lambda)$ for any $(-1)$-primitive element $r\in\Lambda$.
For any special-even primitive element $r\in \Lambda$ with $\langle r,r\rangle=-2$, we have
\[\tau_{r,-1}(\ell)=\ell+\langle \ell,r\rangle r.\]
By the definition of $\Lambda_{U\oplus U(2)}$, if $\Re\langle \ell,r\rangle\in\Z$, then $\Im\langle\ell,r\rangle\in\Z$ for any $\ell\in\Lambda$.
Also by the definition of $\Lambda_{E_8}(-2)$, we have  $\l\ell,r\rangle\in\OO_{\Q(\sqrt{-1})}$ for any $\ell\in \Lambda$.
Hence $\tau_{r,-1}\in U(\Lambda)$ for any special-even $(-2)$-primitive vector $r\in \Lambda$. 
Therefore the map $D_\Lambda\to X:= U(\Lambda)\backslash D_\Lambda$ ramifies along 
\[\bigcup_{\substack{r\in L/\OO_{\Q(\sqrt{-1})}^{\times}\mathrm{:primitive}\\ \langle r,r\rangle=-1}}H(r)\quad\cup\bigcup_{\substack{r\in L/\OO_{\Q(\sqrt{-1})}^{\times}\textrm{:special-even,\ primitive}\\ \langle r,r\rangle=-2}}H(r).\]
For $(-1)$-primitive vector $r\in \Lambda$, 
\[\tau_{r,\sqrt{-1}}(\ell)=\ell+(1-\sqrt{-1})\langle\ell,r\rangle r.\]
If $r\in \Lambda_{E_8(-1)}(2)$, then by the description of the Hermitian matrix defining  $\Lambda_{E_8(-2)}$, we have $\l\ell,r\rangle\in\OO_{\Q(\sqrt{-1})}$, so $(1-\sqrt{-1})\l\ell,r\rangle\in\OO_{\Q(\sqrt{-1})}$.
If $r\in  \Lambda_{U\oplus U(2)}$, then the ideal $\{\l\ell,r\rangle\mid\ell\in \Lambda_{U\oplus U(2)}\}$ is generated by $\frac{1+\sqrt{-1}}{2}$ since $\det(L_{U\oplus U(2)})=\frac{1}{2}$, so $(1-\sqrt{-1})\l\ell,r\rangle\in\OO_{\Q(\sqrt-1)}$.
From a similar discussion as above, we have $\tau_{r,\sqrt{-1}}\in U(\Lambda)$ for any $(-1)$-primitive vector $r\in\Lambda$.

For special-even $(-2)$-primitive vector $r\in\Lambda$, 
\[\tau_{r,\sqrt{-1}}(\ell)=\ell+\frac{(1-\sqrt{-1})}{2}\l\ell,r\rangle r.\]
If $r\in \Lambda_{E_8(-1)}(2)$, then there exists an $\ell\in \Lambda_{E_8(-1)}(2)$ such that $\l\ell,r\rangle=1$, so we have $\frac{(1-\sqrt{-1})\l\ell,r\rangle}{2}=\frac{1-\sqrt{-1}}{2}\not\in\OO_{\Q(\sqrt{-1})}$.
If $r\in \Lambda_{U\oplus U(2)}$, then there exists an $\ell\in \Lambda_{U\oplus U(2)}$ such that $\l\ell,r\rangle=\frac{1+\sqrt{-1}}{2}$, so we have $\frac{(1-\sqrt{-1})\l\ell,r\rangle}{2}=\frac{1}{2}\not\in\OO_{\Q(\sqrt{-1})}$.
Thus, we have $\tau_{r,\sqrt{-1}}\not\in U(\Lambda)$ for any special-even $(-2)$-primitive vector $r\in\Lambda$.

Therefore, the ramification in codimension $1$ 
only occurs along 
\[\bigcup_{\substack{r\in \Lambda/\OO_{\Q(\sqrt{-1})}^{\times}\mathrm{:primitive}\\ \langle r,r\rangle=-1}}H(r)\]
with ramification  degree $2$, and along 
\[\bigcup_{\substack{r\in\Lambda/\OO_{\Q(\sqrt{-1})}^{\times}\textrm{:special-even,\ primitive}\\ \langle r,r\rangle=-2}}H(r)\]
with ramification  degree $4$. 

This example implies Theorem \ref{thm:rare} does not hold for non-unimodular lattices and $F=\Q(\sqrt{-1})$. 

By Example \ref{Enriques}, we have modular forms $\Phi_4\vert_{D_\Lambda}$ and $\Phi_{124}\vert_{D_\Lambda}$ such that 
 \[{\rm div} \Phi_4\vert_{D_\Lambda}=2\sum_{\substack{r\in \Lambda/\OO_{\Q(\sqrt{-1})}^{\times}\mathrm{:primitive}\\ \langle r,r\rangle=-1}}H(r)\]
\[{\rm div} \Phi_{124}\vert_{D_\Lambda}=2\sum_{\substack{r\in \Lambda/\OO_{\Q(\sqrt{-1})}^{\times}\textrm{:special-even,\ primitive}\\ \langle r,r\rangle=-2}}H(r)\]
whose coefficient again comes  from  Lemma \ref{res}.

Hence, applying Theorem \ref{Fanoness} (i) 
to $(\Phi_{4}\vert_{D_\Lambda}^2 \Phi_{124}\vert_{D_\Lambda}^3)^{12}$ with $s(X)=62$, we have the following.
\begin{Cor}
The Satake-Baily-Borel compactification $\overline{X}^{\mathrm{SBB}}$ of the Shimura variety $X:=U(\Lambda)\backslash D_{\Lambda}$
 is a Fano variety, where  $\Lambda:= \Lambda_{U\oplus U(2)}\oplus \Lambda_{E_8(-1)}(2)$ for $F=\Q(\sqrt{-1})$. 
\end{Cor}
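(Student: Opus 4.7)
The strategy parallels that of Example \ref{Unimodular case II_unitary}: pull back the Borcherds products $\Phi_{4}$ and $\Phi_{124}$ from Example \ref{Enriques} along the embedding $\iota\colon D_{\Lambda}\hookrightarrow\D_{\Lambda_{Q}}$, combine the resulting sections into one unitary modular form whose divisor matches the ramification locus of $\pi\colon D_{\Lambda}\to X$ in the proportions dictated by Assumption \ref{Ass1} \eqref{item1.1}, and apply Theorem \ref{Fanoness} \eqref{Fano}.

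First I would set $f:=\Phi_{4}\vert_{D_{\Lambda}}$ and $g:=\Phi_{124}\vert_{D_{\Lambda}}$. The analysis preceding the corollary has already identified the codimension-one branch locus of $\pi$ as the union of the $(-1)$-primitive Heegner divisors (ramification degree $2$) together with the special-even $(-2)$-primitive Heegner divisors (ramification degree $4$). These are precisely the unitary divisors cut out on $D_{\Lambda}$ by the orthogonal special divisors $\mathcal{H}_{-2}$ and $\mathcal{H}_{-4,\text{special-even}}$ under $\iota$, using the scaling $(v,v)_{Q}=2\l v,v\r$ (which sends orthogonal $(-2)$- and special-even $(-4)$-vectors to unitary $(-1)$- and special-even $(-2)$-vectors). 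Applying Lemma \ref{res}---each unitary Heegner divisor $H(r)$ is covered by exactly $\#\OO_{F}^{\times}/2=2$ orthogonal Heegner divisors, each restricting reduced---yields
\[
\mathrm{div}(f) = 2\sum_{\substack{r\in\Lambda/\OO_{F}^{\times}\ \text{prim.}\\ \l r,r\r=-1}} H(r),
\qquad
\mathrm{div}(g) = 2\sum_{\substack{r\in\Lambda/\OO_{F}^{\times}\ \text{sp.-even prim.}\\ \l r,r\r=-2}} H(r),
\]
so that $f$ and $g$ are non-vanishing on $D_{\Lambda}$ away from the branch locus.

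Next I would form $F:=(f^{2}g^{3})^{12}$. The exponents $(2,3)$ are forced by Assumption \ref{Ass1} \eqref{item1.1}: the coefficients on the two branch components must be in the ratio $\tfrac{1}{2}:\tfrac{3}{4}=2:3$, which matches the orders $4$ and $6$ to which $f^{2}g^{3}$ vanishes on the degree-$2$ and degree-$4$ components respectively. The outer twelfth power supplies a trivial character and makes $s(X)N$ and $N/d_{i}$ positive integers. Using the canonical weight $c=n+1=6$ for $U(1,5)$ from Lemma \ref{can.wt}, a direct weight-and-multiplicity count shows that $F$ satisfies Assumption \ref{Ass1} \eqref{item1.1} with $s(X)=62>1$, and Theorem \ref{Fanoness} \eqref{Fano} then gives that $\overline{X}^{\mathrm{SBB}}$ is a Fano variety.

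The main delicacy is the multiplicity bookkeeping at the first step---that is, Lemma \ref{res} \eqref{1}: the factor $\#\OO_{F}^{\times}/2=2$ is precisely what allows the modest exponents $(2,3)$ to hit the tight proportionality of Assumption \ref{Ass1} \eqref{item1.1} on the nose, with no further correction needed. One also has to check that no orthogonal special divisor outside $\mathcal{H}_{-2}\cup\mathcal{H}_{-4,\text{special-even}}$ intersects $\iota(D_{\Lambda})$ in a way that would add extra vanishing to $f$ or $g$; this reduces to an elementary calculation in $\Lambda=\Lambda_{U\oplus U(2)}\oplus\Lambda_{E_{8}(-1)}(2)$ based on $(v,v)_{Q}=2\l v,v\r$ and primitivity.
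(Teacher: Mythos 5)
Your proposal follows essentially the same route as the paper: the paper likewise restricts $\Phi_{4}$ and $\Phi_{124}$ from the Enriques lattice $U\oplus U(2)\oplus E_{8}(-2)$ to $D_{\Lambda}$, invokes Lemma \ref{res} to get the multiplicity $2=\#\OO_{F}^{\times}/2$ on each Heegner divisor, and applies Theorem \ref{Fanoness} \eqref{Fano} to exactly the same power $(\Phi_{4}\vert_{D_\Lambda}^{2}\Phi_{124}\vert_{D_\Lambda}^{3})^{12}$ with the same value $s(X)=62$. The only difference is expository: you make explicit the $\tfrac{1}{2}:\tfrac{3}{4}=2:3$ proportionality forcing the exponents $(2,3)$, which the paper leaves implicit (via Remark \ref{unitary_a}).
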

\end{Ex}

\subsection{Unitary modular cases, Part II - 
with ample canonical class}
\begin{Ex}
\label{Unimodular case I_unitary}
For $F=\Q(\sqrt{-1})$, let $\Lambda:= \Lambda_{U\oplus U}\oplus \Lambda_{E_8(-1)}\oplus \Lambda_{E_8(-1)}\oplus\Lambda_{E_8(-1)}$ be an even unimodular  Hermitian lattice of signature $(1,13)$ whose associated quadratic lattice is $\Lambda_Q=II_{2,26}=
U\oplus U\oplus E_8(-1)\oplus E_8(-1)\oplus E_8(-1)$.
The ramification divisors on $\D_{\Lambda_Q}$ with respect to $O^+(\Lambda_Q)$ is the union of special divisors with respect to $(-2)$-vectors, so the ramification divisors on $D_{\Lambda}$ with respect to $ U(\Lambda)$ are included in the union of special divisors with respect to $(-1)$-vectors as $\langle v,v\rangle$ is real for all $v\in\Lambda$.
There exist possibly double ramification divisors i.e., those with $d_i=2$, and quadruple ramification divisors 
i.e., those with $d_i=4$, 
of the natural morphism $\pi\colon D_\Lambda\to  X:= U(\Lambda)\backslash D_\Lambda$. It ramifies in codimension $1$ along 
\[ \bigcup_{\substack{r\in \Lambda/\OO_{\Q(\sqrt{-1})}^{\times}\mathrm{:primitive}\\ \langle r,r\rangle=-1,\  \tau_{r,-1}\in U(\Lambda)}}H(r)\]
with ramification degree 2, and 
\[ \bigcup_{\substack{r\in \Lambda/\OO_{\Q(\sqrt{-1})}^{\times}\mathrm{:primitive}\\ \langle r,r\rangle =-1,\ \tau_{r,\sqrt{-1}}\in U(\Lambda)}}H(r)\]
with ramification degree 4.

For any primitive element $r\in \Lambda$ with $\langle r,r\rangle=-1$, we have
\[\tau_{r,\sqrt{-1}}(\ell)=\ell+(1-\sqrt{-1})\langle \ell,r\rangle r,\]
but by Proposition \ref{divisor_scaler_unitary} and unimodularity of $\Lambda$, $\langle\ell,r\rangle=\frac{1}{2\sqrt{-1}}$
for some $\ell\in \Lambda$.
Hence $\tau_{r,-1}\not\in U(\Lambda)$ for any $(-1)$-primitive element $r\in \Lambda$, that is, there is no quadruple ramification divisors.

For any primitive element $r\in\Lambda$ with $\langle r,r\rangle=-1$, we have
\[\tau_{r,-1}(\ell)=\ell+2\langle \ell,r\rangle r.\]
Here, 
\[\langle\ell,r\rangle\in\delta\OO_F=\frac{1}{2\sqrt{-1}}\OO_{\Q(\sqrt{-1})},\]
so $2\langle\ell,r\rangle\in\OO_{\Q(\sqrt{-1})}$.
Thus, $\tau_{r,-1}\in U(\Lambda)$ for any $(-1)$-primitive element $r\in \Lambda$, that is, there are only double ramification divisors along 

\[   \bigcup_{\substack{r\in \Lambda/\OO_{\Q(\sqrt{-1})}^{\times}\mathrm{:primitive}\\ \langle r,r\rangle=-1}}H(r)\]
with ramification degree 2.
By Example \ref{Unimodular case I}, $f:=\Phi_{12}\vert_{D_\Lambda}$ is a weight 12 modular form whose divisors are equal to double ramification divisors;
\[{\rm div} f=2\sum_{\substack{r\in \Lambda/\OO_F^{\times}\mathrm{:primitive}\\ \langle r,r\rangle=-1}}H(r)\]
whose coefficient again comes  from  Lemma \ref{res}.
Therefore applying Theorem \ref{Fanoness} (iii) to $f^{28}$ with $s(X)=\frac{3}{14}$, we have the following the following.
\begin{Cor}
The Satake-Baily-Borel compactification $\overline{X}^{\mathrm{SBB}}$ of the Shimura variety $X:=U(\Lambda)\backslash D_{\Lambda}$ is a log canonical model, where $\Lambda:= \Lambda_{U\oplus U}\oplus \Lambda_{E_8(-1)}\oplus \Lambda_{E_8(-1)}\oplus\Lambda_{E_8(-1)}$ for $F=\Q(\sqrt{-1})$. 
Recall from {\it Terminology} 
after Theorem \ref{Fanoness} that 
a log canonical model in this paper  means  
it has only log canonical singularities  
and ample canonical class. 
\end{Cor}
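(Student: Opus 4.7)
The plan is to deduce the corollary from Theorem \ref{Fanoness} \eqref{lcmodel} applied to a suitable power of the restriction of Borcherds' modular form $\Phi_{12}$. First, I would settle the ramification picture of $\pi\colon D_\Lambda \to X$ as set up at the beginning of this example: the only candidates for quasi-reflections are $\tau_{r,-1}$ and $\tau_{r,\sqrt{-1}}$ for primitive $r\in\Lambda$ with $\langle r,r\rangle = -1$. Because $\Lambda$ is unimodular over $\OO_F$, Proposition \ref{divisor_scaler_unitary} yields, for each such $r$, an $\ell\in\Lambda$ with $\langle\ell,r\rangle = 1/(2\sqrt{-1})$, which forces $\tau_{r,\sqrt{-1}}\notin U(\Lambda)$, while $\tau_{r,-1}\in U(\Lambda)$ holds automatically since $2\langle\ell,r\rangle\in\OO_F$ for all $\ell$. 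Hence ramification occurs only along the reduced reflective divisor $B := \sum_{r} H(r)$ with uniform degree $d_i = 2$, and $\sum_i \frac{d_i-1}{d_i}B_i = \frac12 B$.

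Next, I would take $f := \Phi_{12}|_{D_\Lambda}$ via the embedding $\iota\colon D_\Lambda \hookrightarrow \mathcal{D}_{\Lambda_Q}$ with $\Lambda_Q = II_{2,26}$. From Borcherds' identity ${\rm div}(\Phi_{12}) = \mathcal{H}_{-2}$ on $\mathcal{D}_{\Lambda_Q}$ recalled in Example \ref{Unimodular case I}, combined with Lemma \ref{res}, each $(-1)$-hyperplane $H(r)\subset D_\Lambda$ sits in exactly $\#\OO_F^\times/2 = 2$ quadric hyperplanes $\mathcal{H}(r')\subset\mathcal{D}_{\Lambda_Q}$, each of which restricts reduced to $H(r)$. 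Summing these contributions yields ${\rm div}(f) = 2B$ on $D_\Lambda$, so $f$ is a weight-$12$ nowhere-vanishing section of $L^{\otimes 12/14}(-2 B)$ after identifying automorphic weights and tensor powers of $L$.

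Finally, I would verify Assumption \ref{Ass1} \eqref{item1.1} for $f^{28}$. With canonical weight $c = n+1 = 14$ from Lemma \ref{can.wt}, the form $f^{28}$ of arithmetic weight $336$ corresponds to a nowhere-vanishing section of $L^{\otimes 24}(-56\,B)$. Taking $N = 112$, the required coefficient $N(d_i-1)/d_i = 56$ matches exactly, and the slope works out to $s(X) = 24/112 = 3/14 < 1$, in agreement with Remark \ref{unitary_a}. Theorem \ref{Fanoness} \eqref{lcmodel} then produces the asserted log canonical model structure on $\overline{X}^{\mathrm{SBB}}$ with ample $K_{\overline{X}^{\mathrm{SBB}}}$.

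The step I expect to be most delicate is the ramification analysis in the first paragraph, since in the absence of unimodularity the $\tau_{r,\sqrt{-1}}$-quasi-reflection can actually lie in $U(\Lambda)$, as explicitly witnessed in the neighbouring Example \ref{Example 2.3 in Odaka_unitary}; the presence of degree-$4$ branch divisors would then contribute coefficient $3/4$ rather than $1/2$ and would require multiplying $f$ by an independent Borcherds lift to kill those loci, complicating the slope computation. Once the ramification picture is pinned down via Proposition \ref{divisor_scaler_unitary}, the remainder is a clean weight--multiplicity bookkeeping feeding into Theorem \ref{Fanoness}.
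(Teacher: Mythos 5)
Your proposal follows essentially the same route as the paper's own argument: you rule out the order-$4$ quasi-reflections $\tau_{r,\sqrt{-1}}$ via unimodularity and Proposition \ref{divisor_scaler_unitary}, confirm that only double ramification occurs along the $(-1)$-hyperplanes, compute ${\rm div}(\Phi_{12}|_{D_\Lambda})=2B$ using Lemma \ref{res}, and feed $f^{28}$ with $s(X)=3/14<1$ into Theorem \ref{Fanoness}~\eqref{lcmodel}, exactly as in Example \ref{Unimodular case I_unitary}. The bookkeeping (canonical weight $14$, $N=112$, $Ns(X)=24$) is correct, and your closing remark about the failure of this analysis for non-unimodular lattices matches the paper's own comparison with Example \ref{Example 2.3 in Odaka_unitary}.
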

\end{Ex}

\vspace{3mm}
\begin{Ex}
For $F=\Q(\sqrt{-2})$, let  $\Lambda:= \Lambda'_{U\oplus U(2)}\oplus \Lambda'_{E_{8}(-1)}(2)$ be an even Hermitian lattice over $\OO_{\Q(\sqrt{-2})}$ of signature $(1,5)$.
The union of ramification divisors of the map $\pi:D_\Lambda\to X:= U(\Lambda)\backslash D_\Lambda$ are the union of special divisors with respect to $(-1)$-vectors only, unlike $F=\Q(\sqrt{-1})$ case. 
Of course, these divisors ramify with 
 ramification degree $2$, so we can also show  $\overline{X}^{\mathrm{SBB}}$ is a log canonical model.
 (Applying Theorem \ref{Fanoness} (iii) to $f^{12}$ with $s(X)=\frac{1}{6}$, where $f:=\Phi_{4}\vert_{D_\Lambda}$.)
This example implies Theorem \ref{thm:rare} does not hold for non-unimodular lattices and there exist Hermitian lattices, whose quadratic lattices are the same, admitting Shimura varieties with various birational types according to imaginary quadratic fields.
\begin{Cor}
The Satake-Baily-Borel compactification $\overline{X}^{\mathrm{SBB}}$ of the Shimura variety $X:=U(\Lambda)\backslash D_{\Lambda}$ is a log canonical model, where $\Lambda:= \Lambda'_{U\oplus U(2)}\oplus \Lambda'_{E_{8}(-1)}(2)$ for $F=\Q(\sqrt{-2})$.
\end{Cor}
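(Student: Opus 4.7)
The plan is to apply Theorem~\ref{Fanoness}~(iii) to a suitable power of the restriction $\Phi_4|_{D_\Lambda}$ of the Enriques modular form from Example~\ref{Enriques}, following the template of Example~\ref{Example 2.3 in Odaka_unitary} but now over $F=\Q(\sqrt{-2})$. A key simplification compared to that earlier example is that $\OO_F^\times=\{\pm 1\}$, so the only quasi-reflections to be analyzed are the classical ones $\tau_{r,-1}$.

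The first step is to identify the associated quadratic lattice as $\Lambda_Q = U\oplus U(2)\oplus E_8(-2)= \Lambda_{Enr}$, of signature $(2,10)$. This provides the natural embedding $\iota\colon D_\Lambda\hookrightarrow \D_{\Lambda_Q}$ and identifies $D_\Lambda$ as a $5$-dimensional complex ball (so $n=5$, and the canonical weight is $c=n+1=6$ by Lemma~\ref{can.wt}).

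Next I determine the ramification locus of $\pi\colon D_\Lambda\to X$. Because $\OO_F^\times=\{\pm 1\}$, only the degree-$2$ reflections $\tau_{r,-1}$ can occur; the degree-$4$ quasi-reflections present for $F=\Q(\sqrt{-1})$ are absent here. By Lemma~\ref{ramification_pullback} combined with the orthogonal branch description of $\Lambda_{Enr}$ recalled in Example~\ref{Enriques}, the candidate reflective $r$ are the primitive Hermitian $(-1)$- and $(-2)$-vectors in $\Lambda$. An element-wise check on each of the two summands $\Lambda'_{U\oplus U(2)}$ and $\Lambda'_{E_8(-1)}(2)$, entirely analogous to the computation in Example~\ref{Example 2.3 in Odaka_unitary} but run over $\OO_{\Q(\sqrt{-2})}$, shows that $\tau_{r,-1}\in U(\Lambda)$ holds for every primitive $(-1)$-vector, whereas the integrality condition $2\langle\ell,r\rangle/\langle r,r\rangle \in \OO_F$ fails at some $\ell$ for every primitive Hermitian $(-2)$-vector. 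Hence the ramification locus is exactly $\bigcup_{r:\langle r,r\rangle=-1} H(r)$, with uniform ramification degree $d_i=2$.

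Finally, put $f:=\Phi_4|_{D_\Lambda}$. Since $\mathrm{div}(\Phi_4)=\mathcal H_{-2}$ on $\D_{\Lambda_Q}$ and $\#\OO_F^\times/2=1$ here, Lemma~\ref{res} yields a reduced divisor $\mathrm{div}(f)=\sum_{r:\langle r,r\rangle=-1}H(r)$, matching the ramification locus with the coefficient $\tfrac{d_i-1}{d_i}=\tfrac{1}{2}$ demanded by Assumption~\ref{Ass1}~(i). A direct weight-and-multiplicity calculation (weight $4$, multiplicity $1$ on $D_\Lambda$, canonical weight $c=6$) then exhibits $f^{12}$ as a nowhere-vanishing holomorphic section of $\mathcal O_X(N(s(X)L-\tfrac{1}{2}\sum_i B_i))$ for a rational $s(X)<1$, verifying Assumption~\ref{Ass1}~(i). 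Theorem~\ref{Fanoness}~(iii) then gives the conclusion that $\overline{X}^{\mathrm{SBB}}$ is a log canonical model. The main obstacle is the case-by-case integrality analysis of step two; once that is settled, the remaining numerical verification and invocation of Theorem~\ref{Fanoness}~(iii) are formal.
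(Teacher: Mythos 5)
Your proposal is correct and follows essentially the same route as the paper: identify the ramification locus as the $(-1)$-special divisors only, each with degree $2$ (no degree-$4$ quasi-reflections since $\OO_F^\times=\{\pm 1\}$, and the special-even $(-2)$-vectors failing the integrality test $\langle \ell,r\rangle\in\OO_F$), then apply Theorem \ref{Fanoness} (iii) to a power of $\Phi_4|_{D_\Lambda}$, whose divisor is reduced by Lemma \ref{res} since $\#\OO_F^\times/2=1$, with $s(X)<1$. The case-by-case integrality verification you defer is likewise left implicit in the paper, which simply asserts the ramification is along $(-1)$-vectors only, ``unlike the $\Q(\sqrt{-1})$ case,'' so your write-up is if anything more explicit than the original.
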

\end{Ex}

\begin{Rem}
\label{-2_not_ramify}
For $F=\Q(\sqrt{-2})$, let  $\Lambda:= \Lambda'_{U\oplus U}\oplus \Lambda'_{E_8(-1)}\oplus \Lambda'_{E_8(-1)}\oplus \Lambda'_{E_8(-1)}$ be an even unimodular Hermitian lattice over $\OO_{\Q(\sqrt{-2})}$ of signature  $(1,13)$, whose associated quadratic lattice $\Lambda_Q$ is $U\oplus U\oplus E_8(-1)\oplus E_8(-1)\oplus E_8(-1)$. 

Now, we know that for any arithmetic subgroup $\Gamma\subset U(\Lambda)$, the map $\pi:D_\Lambda\to\Gamma\backslash D_\Lambda$ does not ramify in codimension $1$.
This is exactly an example of Theorem \ref{thm:rare}.
Thus the Satake-Baily-Borel compactification  $\overline{\Gamma\backslash D_\Lambda}^{\mathrm{SBB}}$ is a 
log canonical model. 
\end{Rem}

\begin{Rem}
For any imaginary quadratic field with class number 1, we can construct $\Lambda_{U\oplus U}$ and 
$\Lambda_{E_8}$; see \cite[Appendix A]{Maeda}.
As in Theorem \ref{thm:rare}, we can show that the corresponding map does not ramify in codimension $1$ for any arithmetic subgroup 
so that the Satake-Baily-Borel compactification is  log canonical model again. 
\end{Rem}

%%%%%%%%%%%%%%%%%

\begin{Rem}
By the same reason as Remark \ref{-2_not_ramify}, for $F\neq\Q(\sqrt{-1})$, the map $\pi:D_\Lambda\to\Gamma\backslash D_\Lambda$ does not ramify in codimension 1, where $\Lambda:= \Lambda_{U\oplus U}\oplus \Lambda_{E_8(-1)}$ and $\Gamma\subset U(\Lambda)$ is any arithmetic subgroup.
This is also an example of Theorem \ref{thm:rare} and  $\overline{\Gamma\backslash D_\Lambda}^{\mathrm{SBB}}$ is a log canonical model.
\end{Rem}

%%%%%%%%%%%%

\subsection{More examples}
For $F=\Q(\sqrt{-1})$, let $\Lambda_{-1}:= \Lambda_{U\oplus U}\oplus \Lambda_{E_8(-1)}(2)$.
Then, the map $\pi:D_{\Lambda_{-1}}\to U(\Lambda_{-1})\backslash D_{\Lambda_{-1}}$ ramifies at the union of special divisors with respect to $(-1)$-vectors and $(-2)$-special-even vectors.
By \cite[Theorem 8.1]{Yoshikawa}, there exists a reflective modular form $\Psi_{12}$ of weight 12 on $\D_{(\Lambda_{-1})_Q}$ such that 
\[{\rm div} \Psi_{12}\vert_{D_\Lambda}=2\sum_{\substack{r\in \Lambda_{-1}/\OO_{\Q(\sqrt{-1})}^{\times} :\mathrm{primitive}\\ \l r,r\r=-1}}H(r)\]
whose coefficient again comes  from  Lemma \ref{res}.
Thus, $\iota^{\star}\Psi_{12}=\Psi_{12}\vert_{D_{\Lambda_{-1}}}$ is a reflective modular form on $D_{\Lambda_{-1}}$, but this does not satisfy Assumption \ref{Ass2} (ii) because the ramification divisors properly include the divisors of  $\Psi_{12}\vert_{D_{\Lambda_{-1}}}$, i.e.,
\[{\rm Supp}({\rm div} \Psi_{12}\vert_{D_\Lambda})\subsetneq \bigcup_{\substack{r\in L/\OO_{\Q(\sqrt{-1})}^{\times}\mathrm{:primitive}\\ \langle r,r\rangle=-1}}H(r)\quad\cup\bigcup_{\substack{r\in L/\OO_{\Q(\sqrt{-1})}^{\times}\textrm{:special-even,\ primitive}\\ \langle r,r\rangle=-2}}H(r),\]
where the right-hand side is the ramification divisor.
Hence, we can not show the Fano-ness of $\overline{( U(\Lambda_{-1})\backslash D_{\Lambda_{-1}})}^{\mathrm{SBB}}$ in this way (but we can show the uniruledness or more strongly, rationally-chain-connectedness of $U(\Lambda_{-1})\backslash D_{\Lambda_{-1}}$ by \cite[Theorem 5.1]{Maeda}).

On the other hand, for $F=\Q(\sqrt{-2})$, let $\Lambda_{-2}$ be the Herimtian lattice over $\OO_{\Q(\sqrt{-2})}$ of signature $(1,5)$ whose associated quadratic lattice is $U\oplus U\oplus E_8(-2)$.
Then the map $\pi:D_{\Lambda_{-2}}\to U(\Lambda_{-2})\backslash D_{\Lambda_{-2}}$ has no ramification divisors, so we can not even show the uniruledness.

\appendix
\section{Matrix definitions}
\label{appendix}
The following matrices are taken from \cite[Appendix A]{Maeda}.
\subsection{$\Q(\sqrt{-1})$ cases}
\label{app:-1}
Let $\Lambda_{U\oplus U}$ be an even unimodular  Hermitian lattice of signature $(1,1)$ over $\OO_{\Q(\sqrt{-1})}$ defined by the matrix 
\[
\frac{1}{2\sqrt{-1}}
\begin{pmatrix}
0 & 1 \\
-1 & 0 \\
\end{pmatrix}
\]
whose associated quadratic lattice $(\Lambda_{U\oplus U})_Q$ is $U\oplus U$.

Let $\Lambda_{U\oplus U(2)}$ be an even  Hermitian lattice of signature $(1,1)$ over $\OO_{\Q(\sqrt{-1})}$ defined by the matrix 
\[\frac{1}{2}
\begin{pmatrix}
0 & 1+\sqrt{-1} \\
1-\sqrt{-1} & 0
\end{pmatrix}
\]
whose associated quadratic lattice $(\Lambda_{U\oplus U(2)})_Q$ is $U\oplus U(2)$.

Let $\Lambda_{E_8(-1)}$ be an even unimodular  Hermitian lattice of signature $(0,4)$ over $\OO_{\Q(\sqrt{-1})}$ defined by the matrix
\[-
\frac{1}{2}\begin{pmatrix}
2 & -\sqrt{-1} & -\sqrt{-1}&1\\
\sqrt{-1} & 2 & 1 & \sqrt{-1}\\
\sqrt{-1} & 1 & 2 & 1\\
1& -\sqrt{-1} & 1 & 2\\
\end{pmatrix}
\]
whose associated quadratic lattice $(\Lambda_{E_8(-1)})_Q$ is $E_8(-1)$.
This matrix is called Iyanaga's matrix.

\subsection{$\Q(\sqrt{-2})$ cases}
\label{app:-2}
Let $\Lambda'_{U\oplus U}$ be an even unimodular  Hermitian lattice of signature $(1,1)$ over $\OO_{\Q(\sqrt{-2})}$ defined by the matrix
\[
\frac{1}{2\sqrt{-2}}
\begin{pmatrix}
0 & 1 \\
-1 & 0 \\
\end{pmatrix}
\]
whose associated quadratic lattice $(\Lambda'_{U\oplus U})_Q$ is $U\oplus U$.

Let $\Lambda'_{U\oplus U(2)}$ be a Hermitian lattice of signature $(1,1)$ over $\OO_{\Q(\sqrt{-2})}$ defined by the matrix
\[
\begin{pmatrix}
0 & \frac{1}{2} \\
\frac{1 }{2} & 0
\end{pmatrix}
\]
whose associated quadratic lattice $(\Lambda'_{U\oplus U(2)})_Q$ is $U\oplus U(2)$.

Let $\Lambda'_{E_8(-1)}$  be an even unimodular  Hermitian lattice of signature $(0,4)$ over $\OO_{\Q(\sqrt{-2})}$ defined by the matrix
\[
-\frac{1}{2}\begin{pmatrix}
2 & 0 & \sqrt{-2}+1&\frac{1}{2}\sqrt{-2}\\
0 & 2 & \frac{1}{2}\sqrt{-2} & 1-\sqrt{-2}\\
1-\sqrt{-2} & -\frac{1}{2}\sqrt{-2} & 2 & 0\\
-\frac{1}{2}\sqrt{-2}& \sqrt{-2}+1 & 0 & 2\\
\end{pmatrix}
\]
whose associated quadratic lattice $(\Lambda'_{E_8(-1)})_Q$ is $E_8(-1)$.

\begin{comment}
\subsubsection{$\mathbb{U}\oplus\mathbb{U}(2)\oplus\mathbb{D}_4(-1)\oplus\mathbb{D}_4(-1)$}
\subsubsection{$\mathbb{U}\oplus\mathbb{U}\oplus\mathbb{D}_4(-1)\oplus\mathbb{D}_4(-1)$}
\subsubsection{$\mathbb{U}\oplus\mathbb{U}\oplus\mathbb{D}_8(-1)$}
\subsubsection{$\mathbb{U}\oplus\mathbb{U}\oplus\mathbb{D}_6(-1)$}
\subsubsection{$\mathbb{U}(2)\oplus\mathbb{U}(2)\oplus\mathbb{D}_4(-1)$}
\subsubsection{$\mathbb{U}\oplus\mathbb{U}(2)\oplus\mathbb{D}_4(-1)$}
\subsubsection{$\mathbb{U}\oplus\mathbb{U}\oplus\mathbb{D}_4(-1)$}
By the same discussion, for  and the associated Hermitian lattices $L$ over $\Q(\sqrt{-1})$ or $\Q(\sqrt{-2})$, the associated orthogonal Shimura varieties $\O^{+}(L_Q)\backslash D_{L_Q}$ and unitary Shimura variteies $ U(L)\backslash D_L$ are Fano and we can describe ramification divisors on them.
\end{comment}

\subsection*{Acknowledgements}
Y.M would like to express his gratitude to his adviser, Tetsushi Ito, for his helpful comments and warm encouragement. 
We would also like to thank Shouhei Ma for  constructive discussion on quadratic and Hermitian lattices and thank Ken-ichi Yoshikawa 
for the discussion on modular forms. 
Y.M is supported by JST ACT-X JPMJAX200P.
Y.O is partially supported by 
KAKENHI 18K13389, 20H00112 and 16H06335.

%%%%%%%%%%%%%%%%%%%%%%%%%%%%%%%%%%%%%%%%%%%%%%%%%%%%%%%%%


\begin{thebibliography}{FGA}

\bibitem[Ale96]{Alexeev}
V. Alexeev, 
Log canonical singularities and complete moduli of stable 
pairs, arXiv:9608013 

 \bibitem[AF02]{AF}
  D. Allcock, E. Freitag, 
  Cubic surfaces and Borcherds products,
  Comment.\ Math.\ Helv.\ 77 (2002),\ no.\ 2,\ 270-296.
  

\bibitem[ACT02]{ACT}
D. Allcock, J. A. Carlson, D. Toledo, 
The complex hyperbolic geometry of the moduli space of cubic surfaces, J. Algebraic Geom. 11 (2002), 659-724.

\bibitem[Amb03]{Amb}
F. Ambro, Quasi-log varieties, Tr. Mat. Inst. Steklova 240 (2003), Biratsion. Geom. Linein. Sist. Konechno Porozhdennye Algebry, 220-239; translation in Proc. Steklov Inst. Math. 2003, no. 1 (240), 214-233.


\bibitem[AMRT]{AMRT}
A. Ash, D. Mumford, M. Rapoport, Y. -S. Tai, Smooth compactification of locally symmetric varieties, Cambridge Mathematical Library. 

\bibitem[BB66]{BB}
W. Baily, A. Borel, 
Compactification of arithmetic quotients of bounded symmetric domains, Ann. of Math. (2) 84 (1966), 442–528.

\bibitem[BN94]{BN}
W. Barth, I. Nieto, 
Abelian surfaces of type $(1,3)$ 
and quartic surfaces with 
$16$ skew lines, 
J.\ Alg.\ Geom.\ 3 (1994), 
173-222. 

\bibitem[Beh12]{Behrens}
  N. Behrens, 
  Singularities of ball quotients,
  Geom.\ Dedicata\ 159 (2012),\ 389-407. 
  
  






  \bibitem[Bor95]{infinite}
  R. Borcherds, 
  Automorphic forms on $O_{s+2,2}(\R)$ and infinite products,
  Invent.\ Math.\ 120 (1995),\ no.\ 1,\ 161-213.
  
  
\bibitem[Bor96]{Borcherds.Enriques}
R. Borcherds, 
The moduli space of Enriques surfaces and 
the fake Monster Lie superalgebra, 
Topology 35 (1996), no. 3, 699–710.


\bibitem[Bor98]{Borcherds.general}
R. Borcherds, Automorphic forms with singularities on Grassmannians, Invent. Math. 132 (1998), no. 3, 491–562.

\bibitem[BGLM21]{BGLM}
L. Braun, D. Greb, K .Langlois, J. Moraga, 
Reductive quotients of klt singularities, 
arXiv:2111.02812. 

\bibitem[Bru02]{Bruinier.book}
J. H. Bruinier, 
Borcherds products on $O(2,l)$ and Chern classes of Heegner divisors.
Lecture Notes in Mathematics, 1780. Springer-Verlag, Berlin, 2002.

\bibitem[Bru14]{Bruinier.article}
J. H. Bruinier, 
On the converse theorem for Borcherds products, Jour.\ of Algebra 397 (2014), 315-342.

  \bibitem[BEF16]{BEF}
  J. H. Bruinier, S. Ehlen, E. Freitag, 
Lattices with many Borcherds products,
Math. Comp. 85 (2016), no. 300, 1953–1981.



\bibitem[DY20]{DY}
 X. Dai, K. Yoshikawa, Analytic torsion for log-Enriques surfaces and Borcherds product, arXiv:2009.10302.


\bibitem[DKW19]{DKW}
 C. Dieckmann, A. Krieg, M. Woitalla, The graded ring of modular forms on the Cayley half-space of 
degree two, Ramanujan J. 48 (2019), no. 2, 385-398.

\bibitem[DN89]{DN}
J. -M. Drezet, M. S. Narasimhan, 
Groupe de Picard des vari\'et\'es de modules de fibr\'es semi-stables sur les courbes alg\'ebriques, 
Invent. Math. 97 (1989), no. 1, 53-94. 

\bibitem[Fre83]{Freitag}
E. Freitag,
Siegelsche Modulfunktionen,
Springer-Verlag, Berlin, 1983.

\bibitem[FSM10]{FSM}
E. Freitag, R. Salvati Manni, 
Some Siegel threefolds 
with a Calabi-Yau model, 
Ann. Scuola Norm. Sup. Pisa 
Cl. Sci. (5) vol.\ (2010), 
833-850. 

\bibitem[Fjn10]{Fjn.Fano}
O. Fujino, 
Some problems on Fano varieties, 
Proceeding to the conference 
``Fukuso-kikagaku no shomondai (2010)" in Japanese. 



\bibitem[Fjn11]{Fjn}
O. Fujino, Fundamental theorems for the log minimal model program, Publ. Res. Inst. Math. Sci. 47 (2011), no. 3, 727-789. 

\bibitem[Fjn17]{Fjn.MSJ}
O. Fujino, Foundations of the minimal model program, 
MSJ Memoirs, 35. Mathematical Society of Japan, Tokyo, 2017.

\bibitem[FG12]{FG}
O. Fujino, Y. Gongyo, On canonical bundle formulas and subadjunctions, Michigan Math. J. 61 (2012), no. 2, 255-264. 


  



\bibitem[Gri10]{Gr}
V. Gritsenko, 
Reflective modular forms in algebraic geometry, 
arXiv:1012.4155. 

\bibitem[Gri12]{GB}
V. Gritsenko, 
$24$ faces of the Borcherds 
modular forms $\Phi_{12}$, arXiv:1203.6503. 

\bibitem[Gri18]{Gri}
V. Gritsenko, 
Reflective modular forms and applications, 
Russian Math. Surv. 73 (2018) 797-864. 

\bibitem[GH14]{GrHu}
V. Gritsenko, K. Hulek,
Uniruledness of orthogonal modular varieties, 
J. Alg. Geom. 23 (2014), 711-725. 

  \bibitem[GH16]{GH}
  V. Gritsenko, K. Hulek,
  Moduli of polarized Enriques surfaces,
  in K3 surfaces and their moduli,\ 55-72,\ Progr.\ Math.,\ 315,\ 2016.
  
    \bibitem[GHS07]{GHS}
V. Gritsenko, K. Hulek, G. Sankaran, 
The Kodaira dimension of the moduli spaces of K3 surfaces, Invent. math. 169 (2007), 519-567.



\bibitem[GN98]{GrN}
V. Gritsenko, V. Nikulin, 
Automorphic forms and Lorentzian Kac-Moody algebras. I, Internat. J. Math. 9 (1998), no. 2, 153-199.

\bibitem[GN18]{GN}
V. Gritsenko, V. Nikulin, 
Lorentzian Kac-Moody algebras with Weyl groups of 2-reflections. Proc. Lond. Math. Soc. (3) 116 (2018), no. 3, 485-533.

\bibitem[HM07]{HM}
C. Hacon, J. Mckernan, 
On Shokurov's rational connectedness conjecture, 
Duke Math. J. 138 (1) 119 - 136, 15 (2007). 

\bibitem[HU22]{HashimotoUeda}
K. Hashimoto, K. Ueda, 
The ring of modular forms for the even unimodular lattice of signature 
$(2,10)$, 
Proc. Amer. Math. Soc. 150 (2022), no. 2, 547–558.

\bibitem[Hof14]{Hofmann}
E. Hofmann, Borcherds products on unitary groups. Math. Ann. 358 (2014), no. 3-4, 799-832.

\bibitem[Huy94]{Huy}
D. Huybrechts, 
Complete curves in moduli spaces of stable bundles on surfaces, 
Math. Ann. 298 (1994), no. 1, 67-78. 

\bibitem[Ich09]{Ichikawa}
T. Ichikawa, 
Siegel modular forms of 
degree $2$ over rings, 
J. Number Theory 129 (2009), no. 4, 818–823.

\bibitem[Igu64]{Igusa}
 J. Igusa, On Siegel modular forms of genus two. II, Amer. J. Math. 86 (1964), 392–412.
 
\bibitem[KM98]{KM}
J. Koll\'ar, S. Mori, 
Birational geometry of algebraic varieties, 
Cambridge Tracts in Mathematics, 134. Cambridge University Press, Cambridge, 1998.








\bibitem[Kon93]{KondoI}
S. Kond\={o}, 
On the Kodaira dimension of the moduli spaces of K3 surfaces, 
Compositio Math.\ 89 (1993), 251-299. 

\bibitem[Kon94]{Kondo.Enriques}
S. Kond\={o}, 
The rationality of the moduli space of Enriques surfaces, 
Compositio Math. 91 (1994), 159-173. 

\bibitem[Kon99]{KondoII}
S. Kond\={o}, 
On the Kodaira dimension of the moduli spaces of K3 surfaces. II, 
Compositio Math.\ 116 (1999), 111-117. 

\bibitem[Kon02]{Kondo02}
S. Kond\={o}, 
The moduli space of Enriques surfaces and 
Borcherds products, 
J.\ Alg.\ Geom.\ 11 (2002), 601-627. 

\bibitem[Lej93]{Lej}
P. Lejarraga, 
The moduli of Weierstrass fibrations over $\mathbb{P}^{1}$: rationality, Rocky Mountain J. Math. 23 (1993), no. 2, 649–650. 

\bibitem[Li94]{JLi}
J. Li, 
Kodaira dimension of moduli space of vector bundles on surfaces, 
Invent. Math. 115 (1994), no. 1, 1-40. 

\bibitem[Loo84]{Looijenga}
E. Looijenga, 
The Smoothing Components of a Triangle Singularity. II, Math. Ann. 269 (1984), no. 3, 357–387.


\bibitem[Ma12]{MaYos}
S. Ma, 
The unirationality of the moduli spaces of 2-elementary K3 surfaces. With an appendix by Ken-Ichi Yoshikawa. Proc. Lond. Math. Soc. (3) 105 (2012), no. 4, 757-786.


\bibitem[Ma18]{Ma.gentype}
S. Ma, 
On the Kodaira dimension of orthogonal modular varieties, 
Invent.\ Math.\ (2018), pp. 859-911. 

\bibitem[Mae20]{Maeda}
Y. Maeda, 
Uniruledness of unitary Shimura varieties 
associated with Hermitian forms of signatures 
$(1,3), (1,4), (1,5)$. 
arXiv:2008.13106, Master thesis (Department of Mathematics, 
Kyoto university). 

   \bibitem[Mae22]{Maeda_big}
  Y. Maeda, 
  Big line bundles on unitary modular varieties, arXiv:2204.01128. 



\bibitem[Mum77]{Mum77}
D. Mumford, 
Hirzebruch's proportionality theorem in the noncompact case, Invent Math. 42 (1977), 239-277. 

\bibitem[Mum83]{Mumford}
D. Mumford,
On the Kodaira dimension of the Siegel modular variety,
Algebraic geometry—open problems, 348-375, Lecture Notes in Math., 997, Springer, Berlin, 1983.

\bibitem[NU22]{NU}
A. Nagano, K. Ueda, 
The ring of modular forms for the even unimodular lattice of signature $(2,18)$, Hiroshima Math. J. 52 (2022), no. 1, 43-51.

\bibitem[Nam85]{Namikawa}
Y. Namikawa, 
Periods of Enriques surfaces, Math. Ann. 270 (1985), no. 2, 201–222.

\bibitem[OSS16]{OSS}
Y. Odaka, C. Spotti, S. Sun, 
Compact moduli spaces of del Pezzo surfaces and K\"ahler-Einstein metrics, J. Differential Geom. 102 (2016), no. 1, 127-172.


\bibitem[OO21]{OO18}
Y. Odaka, Y. Oshima, 
Collapsing K3 surface, Tropical geometry 
and Moduli compactifications of Satake, Morgan-Shalen type, 
MSJ Memoirs, 40. Mathematical Society of Japan, Tokyo, 2021.

\bibitem[Od20]{Od20}
Y. Odaka, 
PL density invariant for type II degenerating K3 surfaces, Moduli compactification and hyperKahler metrics, Nagoya 
Math. Journal (2020). 

\bibitem[Sat60]{Sat}
I. Satake, 
On compactifications of the quotient spaces for arithmetically 
defined discontinuous groups, Ann. of Math. (2) 72 (1960), 555-580. 

\bibitem[Ste91]{Sterk}
H. Sterk, 
Compactifications of the period space of Enriques  surfaces I, Math. Z. 207 (1991), no. 1, 1–36.

\bibitem[Tai82]{Tai}
Y. S. Tai, On the Kodaira dimension of the moduli space of abelian
varieties, Invent. Math. 68 (1982), 425-439. 

\bibitem[vdG82]{vdG}
G. van der Geer, 
On the geometry of a Siegel modular threefold.
Math. Ann. 260 (1982), no. 3, 317-350. 

\bibitem[vdG21]{vdG2}
G. van der Geer, 
Siegel modular forms of 
degree two and three 
and invariant theory, 
arXiv:2102.02245. 

\bibitem[Yos09]{Yoshikawa2}
K. Yoshikawa, 
Calabi-Yau threefolds of Borcea-Voisin, analytic torsion, 
and Borcherds products, Astérisque No. 328 (2009), 355–393 (2010).



  \bibitem[Yos13]{Yoshikawa}
  K, Yoshikawa,
  \textit{K3 surfaces with involution, equivalent analytic torsion, and automorphic forms on the moduli spaces, II: A structure theorem for $r(M)>10$},
  J.\ Reine Angew.\ Math.\ 677 (2013),\ 15-70.



\bibitem[WW20]{WW}
H. Wang, B. Williams, 
Simple lattices and free algebras of modular forms, 
arXiv:2009.13343. 

\bibitem[WW21]{WW2}
H. Wang, B. Williams, 
Free algebras of modular forms on ball quotients, 
arXiv:2105.14892. 

\bibitem[Zha91]{Zhang.logEnriques}
D-Q. Zhang, 
Logarithmic Enriques surfaces, 
J. Math. Kyoto Univ. 31 (1991), 419-466. 


\bibitem[Zha06]{Zhang}
Q. Zhang, 
Rational connectedness of log $Q$-Fano varieties, 
J. Reine Angew. Math. 590 (2006) 131-142.


\end{thebibliography}
\end{document}